\documentclass[11pt,a4paper,reqno]{amsart}
\usepackage{amssymb,amsmath,amsthm}
\usepackage[utf8]{inputenc}
\usepackage[T1]{fontenc}
\usepackage{enumerate}
\usepackage[all]{xy}
\usepackage{mathrsfs} 
\usepackage{fullpage}
\usepackage{comment}
\usepackage{xcolor}
\usepackage{stmaryrd}

\def\Z{\mathbb{Z}}

\def\N{\mathbb{N}}
\def\Q{\mathbb{Q}}

\def\cM{\mathcal{M}}
\def\cS{\mathcal{S}}

\def\cO{\mathcal{O}}
\def\sP{\mathscr{P}}
\def\sS{\mathscr{S}}
\def\fp{\mathfrak{p}}
\def\fd{\mathfrak{d}}
\def\fb{\mathfrak{b}}
\def\F{\mathbb{F}}
\def\div{\mathrm{div}}
\def\ord{\mathrm{ord}}

\newcommand{\Cc}{\mathbb{C}} 

\newcommand{\Rr}{\mathbb{R}}

\newcommand{\Zz}{\mathbb{Z}}

\newcommand{\Ff}{\mathbb{F}}

\newcommand{\irr}{{\rm Irred}(\mathcal{O}_K)}

\newcommand{\ul}[1]{\underline{#1}}

\renewcommand {\leq}{\leqslant}
\renewcommand {\geq}{\geqslant}
\renewcommand {\le}{\leqslant}
\renewcommand {\ge}{\geqslant}

\newtheorem{thm}{Theorem}[section]
\newtheorem{lem}[thm]{Lemma}
\newtheorem{prop}[thm]{Proposition}
\newtheorem{cor}[thm]{Corollary}

\theoremstyle{definition}
\newtheorem{defn}[thm]{Definition}

\newtheorem{rmk}[thm]{Remark}

\title{Integral Specializations of Polynomials: \hskip 90mm Hilbert Irreducibility, Coprimality, and Density}

\author{Angelot Behajaina}
\email{behajainaangelot@gmail.com}
\address{Department of Mathematics, Institute of Science Tokyo, 2-12-1 Ookayama, Meguro-ku, Tokyo 152-8551, Japan}

\author{Pierre D\`{e}bes}
\email{pierre.debes@univ-lille.fr}
\address{Univ. Lille, CNRS, UMR 8524, Laboratoire Paul Painlev\'{e}, F-59000 Lille, France}

\author{Martin Widmer}
\email{martin.widmer@tugraz.at}
\address{TU Graz, Institute of Analysis and Number Theory, Steyrergasse 30/II, 8010 Graz, Austria}
\begin{document}

\subjclass[2020] {Primary 12E25, 12E05, 12E30, 11G50; Sec. 11C08, 11Gxx}


\keywords{Polynomials, Irreducibility, Specialization, Hilbertian Rings, Heights, Density}

\maketitle

\begin{abstract}
The Hilbert specialization property of a field, whereby irreducibility of polynomials is preserved under specialization of some of the variables, admits an integral counterpart where specialization takes place inside a ring and irreducibility is required over the ring. It is known that the ring $\mathcal{O}_K$ of integers of any global field $K$ satisfies this integrally Hilbertian property. Using a different approach, we reprove this result and strengthen it by showing that the corresponding integral Hilbert subsets of $\mathcal O_K^k$ are not only Zariski-dense, as required by integral hilbertianity, but also of positive density. 
A central ingredient is a generalization to the ring $\mathcal{O}_K$ of the Ekedahl--Poonen  formula originally proved for $\mathbb{Z}$ and $\F_q[t]$. This formula, of independent interest, gives the density of tuples in $\mathcal{O}_K^k$ for which coprime polynomials in $k$ variables and with coefficients in ${\mathcal O}_K$ take coprime values. Our generalization requires estimates for the number of points of bounded height in an arithmetic progression. We obtain such estimates as our third main result, thereby contributing to the theory of heights.
\end{abstract}

\section{Introduction} \label{sec:intro}
Inspired by Hilbert's irreducibility theorem, this paper studies irreducibility and coprimality questions for integral specializations of polynomials. We prove three main results, which are 
related in the context of this paper but rely on different techniques and may be of 
independent interest. 
Theorem \ref{thm:denposHilIrrThm}, which we start with below and was the original motivation, belongs to Hilbertian ring theory. Theorem \ref{thm:ekpoongen} extends to rings ${\mathcal O}_K$ of integers of global fields\footnote{A global field $K$ is a finite extension of $\Q$ or $\F_q(x)$ {with $K \cap \overline{\F_q}=\F_q$}, and ${\mathcal O}_K$ denotes the integral closure of $\mathbb{Z}$ or ${\mathbb{F}}_q[t]$ in $K$.} $K$ a formula due independently to Ekedahl and Poonen for ${\mathcal O}_K = {\mathbb Z}$ or  ${\mathcal O}_K = {\mathbb F}_q[t]$ on the density of points at which coprime polynomials assume coprime values. Theorem \ref{prop:crucari} provides some seemingly unknown though basic estimates for the number of points of bounded height in an arithmetic progression of the ring of integers of a global field.
\vskip 1mm 

\subsection{On integrally Hilbertian rings} Let ${\mathcal Z}$ be an integral domain with fraction field ${\mathcal Q}$. Let $\underline{T}=(T_1,\dots,T_k)$ and $\underline{Y}=(Y_1,\dots,Y_n)$ be two tuples of variables ($k, n \geq 1$). Let $P_1,\ldots,P_s \in \mathcal{Z}[\underline{T},\underline{Y}]$ be some polynomials, irreducible in ${\mathcal Q}[\underline{T},\underline{Y}]$ ($s\geq 1$), of degree $\geq 1$ in $\underline{Y}$ and such that the product $P=P_1 \cdots P_s$ has no \emph{fixed divisor \hbox{w.r.t.} $\underline{T}$}. Recall that by fixed divisor  \hbox{w.r.t.} $\underline{T}$, we mean a nonunit $a \in {\mathcal Z}$, $a\not=0$, such that $P(\underline{t},\underline{Y}) \equiv 0 \pmod{a}$ for every $\underline{t} \in {\mathcal Z}^k$. For example $2\in \Z$ is a fixed divisor of $P=(T^2-T)Y+(T^2-T-2)$ \hbox{w.r.t.} $T$. The following notion was introduced in \cite{BD25}.\footnote{following first investigations in \cite{BDKN22} under the name of \emph{Hilbert--Schinzel specialization property}.}

\begin{defn} \label{def:int-Hil}
The ring ${\mathcal Z}$ is said to be \emph{integrally Hilbertian} if for all $k,n,s, \underline T,\underline Y$ and $P_1,\ldots,P_s$ as above, the following subset is Zariski-dense:
\vskip 1mm

\centerline{${\mathcal H}_{\mathcal Z}(P_1,\ldots,P_s) = \left\{\underline{t} \in {\mathcal Z}^k \hskip 1pt \Bigg\vert \hskip 1pt 
\left\{\begin{matrix}
P_1(\underline{t},\underline{Y}),\dots,P_s(\underline{t},\underline{Y}) \hbox{\it \ are irreducible in } \mathcal{Q}[\underline{Y}], \hbox{\ \it and}\hfill \\
\hbox{\it the product } \prod_{i=1}^s P_i(\underline t,\underline Y) \hbox{\it  \ has no nonunit divisor in } {\mathcal Z}. \hfill \\
\end{matrix}
\right.
\right\}$.}
\end{defn}

\noindent
The double condition in the definition of ${\mathcal H}_{\mathcal Z}(P_1,\ldots,P_s)$ implies that 
\vskip 1mm

\noindent
(*) \emph{for all $\underline t\in {\mathcal H}_{\mathcal Z}(P_1,\ldots,P_s)$, the polynomials $P_1(\underline{t},\underline{Y}),\dots,P_s(\underline{t},\underline{Y})$ are {irreducible in $\mathcal{Z}[\underline{Y}]$}.}
\vskip 1,5mm
 
Compared to the classical Hilbertian property, the main feature of the {integrally} Hilbertian property is precisely that it yields specialized polynomials that are irreducible \emph{over the ring ${\mathcal Z}$} (and not merely over the field ${\mathcal Q}$). A typical consequence is a polynomial variant of the Schinzel Hypothesis for the ring ${\mathcal Z}[\underline T]$ (\cite{BD25}, \cite{BDN20b}).
\vskip 0,mm

It is known that, among other rings, the ring of integers ${\mathcal O}_K$ of any global field $K$ is integrally Hilbertian \cite[Corollary 1.4]{BD25}. In the special case ${\mathcal Z} = \Zz$, \cite[Theorem~1.7]{BD23} uses an alternate approach to show more precisely that the set ${\mathcal H}_{\Zz}(P_1,\ldots,P_s)$ is of positive density\footnote{The density used here on $\Zz^k$, and below on $({\mathcal O}_K)^k$, is precisely defined in \S \ref{ssec:density}.}. We extend this conclusion to all 
rings of  integers ${\mathcal O}_K$.

\begin{thm}\label{thm:denposHilIrrThm} Let ${\mathcal Z} ={\mathcal O}_K$ be the ring of integers of a global field $K$. Let $P_1,\ldots,P_s \in \mathcal{Z}[\underline{T},\underline{Y}]$ be some polynomials, irreducible in $K[\underline{T},\underline{Y}]$ ($s\geq 1$), of degree $\geq 1$ in $\underline{Y}$ and such that the product $P=P_1 \cdots P_s$ has no {fixed divisor \hbox{w.r.t.} $\underline{T}$}. Then the set ${\mathcal H}_{{\mathcal Z}}(P_1,\ldots,P_s)$ contains a subset of $({\mathcal O}_K)^k$ of positive density.
\end{thm}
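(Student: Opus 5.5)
The plan is to treat the two conditions in the definition of ${\mathcal H}_{\mathcal Z}(P_1,\ldots,P_s)$ separately, each by a density argument, and then intersect. For the \emph{coprimality} condition we use Theorem~\ref{thm:ekpoongen}, the Ekedahl--Poonen formula over ${\mathcal O}_K$. For the \emph{irreducibility over $K$} condition we use a classical Hilbert-type statement, transported to ${\mathcal O}_K^k$: the complement of a Hilbert subset of $K^k$ meets ${\mathcal O}_K^k$ in a set of density $0$. The density is measured by the height/box counting of \S\ref{ssec:density}, with point counts supplied by Theorem~\ref{prop:crucari}.

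\emph{Coprimality step.} Write each $P_i$ as a polynomial in $\underline Y$ with coefficients in ${\mathcal O}_K[\underline T]$, and let $a_1(\underline T),\ldots,a_N(\underline T)$ be all the coefficients of $P=P_1\cdots P_s$. The product $P(\underline t,\underline Y)$ has a nonunit divisor in ${\mathcal O}_K$ exactly when some prime $\fp$ divides all the $a_j(\underline t)$. Because the ring is Dedekind rather than a UFD, I would phrase this via prime ideals: the condition is that the $a_j(\underline t)$ generate the unit ideal. The set of such $\underline t$ should have density
\[
\prod_{\fp}\Bigl(1-\frac{c_\fp}{N\fp^{k}}\Bigr),
\]
where $c_\fp$ is the number of $\underline t\in(\cO_K/\fp)^k$ with all $a_j(\underline t)\equiv0\pmod\fp$. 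Two facts make this product positive. First, no $c_\fp$ equals $N\fp^k$, since $P$ has no fixed divisor w.r.t.\ $\underline T$. Second, for large $N\fp$ we have $c_\fp=O(N\fp^{k-2})$, because the $a_j$ can be taken coprime in $K[\underline T]$ after removing a common factor. The content issue needs care here: a common factor $g(\underline T)$ of the $a_j$ in $K[\underline T]$ would be a factor of $P$ of degree $0$ in $\underline Y$, which is excluded because each $P_i$ is irreducible of positive $\underline Y$-degree. Only constants can remain, and these are handled by the absence of a fixed divisor at finitely many primes. This is precisely the hypothesis shape of Theorem~\ref{thm:ekpoongen}, whose conclusion then gives the positive density directly.

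\emph{Irreducibility step.} I would show that the set of $\underline t\in{\mathcal O}_K^k$ for which some $P_i(\underline t,\underline Y)$ is reducible over $K$ has density zero. The first move is to reduce to the case $n=1$ with separable polynomials, via the standard Kronecker substitution $Y_j\mapsto Y^{d^{j-1}}$ and, in positive characteristic, the usual reduction to separable Hilbert sets. Next, by the classical argument (S.~D.~Cohen's theorem, or the Chebotarev/Lang--Weil large-sieve approach), the Hilbert subset of $K^k$ contains a set whose complement in ${\mathcal O}_K^k$ has density $0$. Concretely, for the Galois group $G$ of $P_i$ over $K(\underline T)$, choose for each maximal subgroup a set of primes $\fp$ of positive density at which the specialization's Frobenius avoids it. By Lang--Weil, a positive proportion of residue classes mod $\fp$ work. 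Sieving over many such $\fp$ then makes the exceptional density arbitrarily small. Theorem~\ref{prop:crucari} supplies the uniform count of points of bounded height in the residue classes modulo $\fp$ needed to make the sieve rigorous.

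\emph{Conclusion and main obstacle.} Intersecting a set of positive density with the complement of a density-zero set leaves positive density, which proves Theorem~\ref{thm:denposHilIrrThm}. I expect the main obstacle to be the density-zero statement for the irreducibility condition over a general global field. This requires uniform Lang--Weil/Chebotarev counts together with the arithmetic-progression estimates, and in characteristic $p$ it also requires the inseparability caveats; that is where the technical effort goes. By comparison, the fixed-divisor and content bookkeeping in the coprimality step only needs checking.
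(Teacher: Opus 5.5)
Your coprimality step has a genuine gap, and it sits exactly where a general global field differs from $\mathbb{Z}$ or $\mathbb{F}_q[t]$. You claim that no $c_{\mathfrak p}$ equals ${\rm N}(\mathfrak p)^k$ because $P$ has no fixed divisor w.r.t.\ $\underline T$. That hypothesis is about nonunit \emph{elements} of $\mathcal{O}_K$, so it only excludes $c_{\mathfrak p}={\rm N}(\mathfrak p)^k$ for \emph{principal} primes. When the class number is $>1$, a non-principal prime ideal can contain all $\underline Y$-coefficients of $P(\underline t,\underline Y)$ for every $\underline t$.

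For instance, take $K=\mathbb{Q}(\sqrt{-5})$, $\mathfrak p=(2,1+\sqrt{-5})$ and $P=2Y+(1+\sqrt{-5})T$. Then $P$ is irreducible in $K[T,Y]$. It has no fixed divisor: at $t=1$ the coefficients $2$ and $1+\sqrt{-5}$ have no common nonunit divisor, since no element has norm $2$. Yet both coefficients lie in $\mathfrak p$ for all $t$. Here $c_{\mathfrak p}={\rm N}(\mathfrak p)$ and $\mathcal{R}^\star=\emptyset$, while $\mathcal{R}=\mathcal{O}_K\setminus\mathfrak p$ has density $1/2$.

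Your reformulation ``the $a_j(\underline t)$ generate the unit ideal'' is exactly $\mathcal{R}^\star$. It is a valid sufficient condition, but it can have density $0$, so Theorem~\ref{thm:ekpoongen} alone cannot give the result. Likewise, your claim that $P(\underline t,\underline Y)$ has a nonunit divisor ``exactly when some prime divides all $a_j(\underline t)$'' holds in one direction only.

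What is missing is a way to handle the finitely many fixed prime ideal divisors. The paper does this in three steps.
\begin{enumerate}
\item It takes the finite set $\mathscr{C}$ of such primes, and the finite set $\Sigma$ of ideals generated by irreducible elements whose support lies in $\mathscr{C}$. Then $\mathcal{R}_\Sigma\cap\mathcal{R}_{\mathscr{C}^c}\subset\mathcal{R}$: a nonunit common divisor may be taken irreducible, and its ideal either has a prime factor outside $\mathscr{C}$ or lies in $\Sigma$.
\item It proves $\mu(\mathcal{R}_\Sigma)>0$. This is not a CRT triviality. Each $\mathcal{R}_{\mathfrak a}$ is nonempty by the no-fixed-divisor hypothesis, but the ideals in $\Sigma$ share prime factors. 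Nonemptiness of their intersection comes from the nonvanishing approximation property of $\mathcal{O}_K$ proved in [BD25].
\item It applies the generalized formula of Theorem~\ref{thm:ekpoongen-2}, $\mu(\mathcal{R}_\Sigma\cap\mathcal{R}_{\mathscr{C}^c})=\mu(\mathcal{R}_\Sigma)\prod_{\mathfrak p\notin\mathscr{C}}(1-c_{\mathfrak p}/{\rm N}(\mathfrak p)^k)$. This expresses independence between the periodic conditions at $\mathscr{C}$ and the conditions at all other primes.
\end{enumerate}
So the real difficulty lies in the coprimality part, which you set aside as bookkeeping. It is not in the irreducibility part: there the paper simply cites \cite[Theorem~1.1]{PS24} ($\mathcal{H}_K\cap\mathcal{O}_K^k$ has density $1$), and your sieve sketch is a reasonable route to that. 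Note also that Theorem~\ref{thm:ekpoongen} requires $\ell\ge 2$, so the case where $P$ has a single $\underline Y$-coefficient must be treated separately, though it is easy.
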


As explained in the next subsections, some difficulties arise in the generalization, 
concerning certain ingredients of the approach.

\subsection{On the Ekedahl--Poonen formula} A central ingredient is a generalization of a formula due to Ekedahl and Poonen independently, which is interesting for its own sake.  Given some polynomials $Q_1,\dots,Q_\ell\in  \mathcal{O}_K[\underline{T}]$ ($\ell \geq 2$), coprime in $K[\underline{T}]$, define:
$$
\mathcal{R}=\{\underline{m} \in \mathcal{O}_K^k \mid Q_1(\underline{m}),\dots, Q_\ell(\underline{m})\,\,\hbox{\rm have no common nonunit divisor in }\,\mathcal{O}_K\}.
$$ 
For every nonzero ideal $\mathfrak{a}$ of $\mathcal{O}_K$, define
\begin{equation}\label{eq:defcp}
c_{\mathfrak{a}}=\#\{\underline{u} \in (\mathcal{O}_K/\mathfrak{a})^k \mid Q_1(\underline{u})= \dots = Q_\ell(\underline{u}) = 0\ \hbox{in } \mathcal{O}_K/\mathfrak{a}\}.
\end{equation} 

\noindent
For ${\mathcal Z}=\Zz$ and ${\mathcal Z}=\Ff_q[t]$, Ekedahl and Poonen proved the following formula for the density $\mu$ defined in \S \ref{ssec:density}; see \cite{Poo03} for $\ell=2$, and  
\cite{BD23} for $\ell\geq 2$ and ${\mathcal Z}=\Zz$:
\vskip 1mm

\centerline{$\displaystyle \mu(\mathcal{R})=\prod_{p\,\,\textrm{prime}} \left( 1-\frac{c_{p\mathbb{Z}}}{p^k} \right).
$}
\vskip1mm

We wish to extend this formula to the general case ${\mathcal Z}=\mathcal{O}_K$ and $\ell\geq 2$. Denote the set of prime ideals (\hbox{resp.} of nonzero prime ideals) of $\mathcal{O}_K$ by ${\rm Spec}\hskip 1pt \mathcal{O}_K$ (\hbox{resp.} by $({\rm Spec}\hskip 1pt \mathcal{O}_K)^\ast$), the (relative) Weil height on $K$ by $H_K$, the corresponding ``density'' on $\mathcal{O}_K^k$ by $\mu$, and the norm of an ideal $\mathfrak{p}\subset {\mathcal O}_K$ by ${\rm N}(\mathfrak{p})=\# (\mathcal{O}_K/\mathfrak{p})$; see Section \ref{sec:heights-statements} for details. 
Define the prime ideal version $\mathcal{R^\star}$ of $\mathcal{R}$:
\vskip 2mm
\centerline{$
\mathcal{R^\star}=\{\underline{m} \in \mathcal{O}_K^k \mid Q_1(\underline{m}),\dots, Q_\ell(\underline{m})\,\,\hbox{\rm have no common \emph{prime ideal} divisor in }\,\mathcal{O}_K\}
$.}

\begin{thm}\label{thm:ekpoongen} Let $K$ be a global field and $Q_1,\ldots,Q_\ell \in {\mathcal O}_K[\underline T]$ be $\ell\geq 2$ nonzero polynomials, coprime in $K[\underline{T}]$. Then we have: 
\vskip 1mm

\centerline{$\displaystyle 
\mu(\mathcal{R}^\star)=\prod_{\mathfrak{p}\in ({\rm Spec}\hskip 1pt \mathcal{O}_K)^\ast} \left( 1- \frac{c_\mathfrak{p}}{{\rm N}(\mathfrak{p})^{k}} \right).
$}
\end{thm}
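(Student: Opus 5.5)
We follow Poonen's sieve strategy, adapted to $\mathcal{O}_K$. Since the $Q_i$ are coprime in $K[\underline T]$, the common zero locus $Z=\{Q_1=\dots=Q_\ell=0\}$ has codimension $\geq 2$ in $\mathbb{A}^k_K$. For a finite set $S$ of nonzero primes, let $\mathcal{R}_S$ be the set of $\underline m$ such that no $\mathfrak p\in S$ divides all $Q_i(\underline m)$. Membership in $\mathcal{R}_S$ depends only on $\underline m$ modulo $\mathfrak a_S=\prod_{\mathfrak p\in S}\mathfrak p$. By CRT, $\mathcal{R}_S$ is a union of $\prod_{\mathfrak p\in S}({\rm N}(\mathfrak p)^k-c_{\mathfrak p})$ residue classes modulo $\mathfrak a_S$.

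\textbf{Step 1 (finite sieve).} The paper's estimates for points of bounded height in an arithmetic progression (Theorem~\ref{prop:crucari}) give the count of $\underline m\in\mathcal{O}_K^k$ of height $\leq H$ in a residue class mod $\mathfrak a$. This count is ${\rm N}(\mathfrak a)^{-k}$ times the total count plus a lower-order error, uniformly enough in $\mathfrak a$. Hence $\mu(\mathcal{R}_S)=\prod_{\mathfrak p\in S}(1-c_{\mathfrak p}/{\rm N}(\mathfrak p)^k)$. Moreover, $\mathcal{R}^\star\subset\mathcal{R}_S$, which gives the upper bound for $\limsup$.

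\textbf{Step 2 (convergence of the product).} By Lang--Weil applied to the reduction of $Z$, we have $c_{\mathfrak p}=O({\rm N}(\mathfrak p)^{k-2})$ for all but finitely many $\mathfrak p$. Since $\sum_{\mathfrak p}{\rm N}(\mathfrak p)^{-2}<\infty$, the product converges. Alternatively, one can use an elementary bound obtained by eliminating one variable via a resultant and inducting.

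\textbf{Step 3 (tail estimate).} This is the main obstacle. We must show that
\[
\limsup_{H\to\infty}\frac{\#\{\underline m: H_K(\underline m)\leq H,\ \exists\,\mathfrak p,\ {\rm N}(\mathfrak p)>M,\ \mathfrak p\mid Q_i(\underline m)\ \forall i\}}{\#\{\underline m: H_K(\underline m)\leq H\}}\longrightarrow 0 \quad\text{as } M\to\infty.
\]
Following Poonen, we may reduce to $\ell=2$ and to $Q_1,Q_2$ coprime, with $Q_1$ irreducible after a change of variables. We then split the primes into three ranges.

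For medium primes, $M<{\rm N}(\mathfrak p)\leq H^{\epsilon}$, we use Step 1's progression estimate with a uniform error. The relevant count is $\sum c_{\mathfrak p}{\rm N}(\mathfrak p)^{-k}\cdot(\text{total})$, which is $\ll\sum_{{\rm N}\mathfrak p>M}{\rm N}(\mathfrak p)^{-2}$ times the total. This requires the error term of Theorem~\ref{prop:crucari} to be explicit in $\mathfrak a$.

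For large primes, ${\rm N}(\mathfrak p)>H^{\epsilon}$, we follow Ekedahl/Poonen. Write $\underline m=(\underline m',m_k)$ and arrange that $Q_1,Q_2$ are coprime as polynomials in $T_k$ over $K[T_1,\dots,T_{k-1}]$. Let $R(\underline T')$ be their resultant. If $R(\underline m')\neq 0$, then $\mathfrak p$ divides $R(\underline m')$, whose height is polynomial in $H$. So there are $O(1)$ such primes, each constraining $m_k$ to $O(1)$ classes modulo $\mathfrak p$. This yields a saving of $H^{-\epsilon'}$ when $m_k$ ranges over a box. The tuples with $R(\underline m')=0$ form a thin set of density $0$.

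Making this work for general $\mathcal{O}_K$ is the delicate point, since heights do not correspond to boxes. I would first replace the height count by counting in a fundamental-domain box under the Minkowski embedding (for function fields, Riemann--Roch spaces), where the fibers in $m_k$ are lattice points in a region. The condition $\mathfrak p\mid$ fixed polynomial then restricts $m_k$ to few cosets of $\mathfrak p$, and coset counts are bounded by lattice-point estimates with ${\rm N}(\mathfrak p)\leq H^{O(1)}$.

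Combining the three ranges and letting $S$ exhaust all primes with ${\rm N}\mathfrak p\leq M$, then $M\to\infty$, gives $\mu(\mathcal{R}^\star)=\prod_{\mathfrak p}(1-c_{\mathfrak p}/{\rm N}(\mathfrak p)^k)$.
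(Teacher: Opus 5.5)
Your Steps 1 and 2 are sound and parallel the paper (Proposition \ref{main-lem}(a) and Lemma \ref{local-lemma}). Lang--Weil is an acceptable substitute for the Lachaud--Rolland bound, provided you justify that the reductions of $Z$ keep dimension $\le k-2$ for almost all $\mathfrak p$. The gap is the threshold $H^{\epsilon}$ in Step 3.

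Theorem \ref{prop:crucari} does \emph{not} give $\#(\mathfrak p+a)_B\ll\#\mathcal{O}_{K,B}/{\rm N}(\mathfrak p)$ uniformly for ${\rm N}(\mathfrak p)\le B^{\epsilon}$. In the number field case its error term $B\log(B)^{\#\cS_\infty-2}/{\rm N}(\mathfrak p)^{1/\fd}$ is dominated by the main term $B\log(B)^{\#\cS_\infty-1}/{\rm N}(\mathfrak p)$ only while ${\rm N}(\mathfrak p)^{1-1/\fd}\ll\log B$, i.e.\ for norms up to a power of $\log B$. Beyond that, the bound you actually get for one residue class mod $\mathfrak p$, relative to $\#\mathcal{O}^k_{K,B}$, is $\ll(\log B)^{-k}{\rm N}(\mathfrak p)^{-k/\fd}$. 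Summing over $c_{\mathfrak p}\le d^\ell{\rm N}(\mathfrak p)^{k-2}$ classes and over ${\rm N}(\mathfrak p)\le B^{\epsilon}$ gives a positive power of $B$ as soon as $k(1-1/\fd)>1$. So the issue is not making the error explicit in $\mathfrak a$ (it already is); it is the range in which that error is dominated by the main term. This reflects the long thin cusps of the height region when $\#\cS_\infty\ge 2$, where lattice-point counting for $\underline{\sigma}(\mathfrak p)$ only saves ${\rm N}(\mathfrak p)^{1/\fd}$. Your fallback of counting in Minkowski boxes does not repair this. It computes a different density from the height density $\mu$ of the statement, and covering the height region by boxes reintroduces long thin boxes with the same error.

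The missing idea is the paper's calibration. It splits at $\theta(B)=\log_q(B)^{1/(1-\varepsilon)}$ with $\varepsilon<1/\fd$, which lies inside the range where Theorem \ref{prop:crucari} gives $\#(\mathfrak p+a)_B/\#\mathcal{O}_{K,B}\le A_K/{\rm N}(\mathfrak p)$. At this threshold the large range loses both your ``$O(1)$ primes'' and your power saving $H^{-\epsilon'}$. Instead, Lemma \ref{lem:funfunc} allows up to $L(B)\ll\log B/\log\theta(B)$ primes of norm $>\theta(B)$ dividing $Q_\ell(\underline m')$. Corollary \ref{lem:boundinarithproBC} bounds each residue class by $D_B=o(\#\mathcal{O}_{K,B}\log\theta(B)/\log B)$, so $L(B)D_B=o(\#\mathcal{O}_{K,B})$. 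The condition $\varepsilon<1/\fd$ is exactly what makes these logarithmic factors balance.

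There is also a smaller gap. ``Each constraining $m_k$ to $O(1)$ classes'' fails when $\mathfrak p$ divides every $T_k$-coefficient of both $Q_1(\underline m',T_k)$ and $Q_2(\underline m',T_k)$, since then $m_k$ is unconstrained. The paper handles this (set $\mathcal S_2$) by arranging that $Q_\ell$ is independent of $T_k$ and divides none of the leading coefficients $Q_i^0$, then applying induction on $k$ to $Q_1^0,\dots,Q_{\ell-1}^0,Q_\ell$. Your proposal has no such induction. A linear change of variables making $Q_1$ monic in $T_k$ is not a substitute, because it does not preserve the product height boxes $\mathcal{O}^k_{K,B}$.
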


We have $\mathcal{R^\star} \subset \mathcal{R}$, and $\mathcal{R^\star}=\mathcal{R}$ if $K$ is a global field of class number $1$. A problem in the class number $>1$ case is that we may have $\mu({\mathcal R}^\star)=0$
(if $c_{\mathfrak p} = {\rm N}(\mathfrak{p})^{k}$ for some prime ideal $\mathfrak p$), while we wish to show that $\mu({\mathcal R})>0$ if $c_{\mathfrak p} < {\rm N}(\mathfrak{p})^{k}$ for all \emph{principal} prime ideals $\mathfrak p$. We will prove a more general but more involved version of Theorem \ref{thm:ekpoongen}, namely Theorem \ref{thm:ekpoongen-2}, which allows to handle the set $\mathcal{R}\setminus \mathcal{R^\star}$ and leads to a positive density for the original set ${\mathcal R}$.

\vskip 2mm 

\subsection{On points of bounded height in arithmetic progressions} Our approach to Theorem \ref{thm:ekpoongen} rests on Poonen's method. Passing from the basic cases $K={\mathbb Q},{\mathbb F}_q(t)$ to a general  global field $K$ raises some difficulties. In particular we need estimates for the number of points of $\mathcal{O}_K$ of Weil height bounded by some number $B$ in an ``arithmetic progression''. What we call an arithmetic progression is a subset  ${\frak p} + a\subset \mathcal{O}_K$ with $a\in \mathcal{O}_K$ and ${\frak p}\subset \mathcal{O}_K$ a nonzero ideal. 

While this issue is easy in the basic cases, essentially reducing to counting integers of bounded absolute value for $K={\mathbb Q}$ and polynomials of bounded degree for $K={\mathbb F}_q(t)$, it appeared that, to our knowledge and despite its basic nature, 
no such estimate was available  for a general global field $K$. We address this gap. 

Our approach, which occupies all of Sections \ref{sec:heights-statements} and \ref{sec:crup}, uses height theory allied with geometry of numbers in the number field case and geometry of curves over finite fields in the function field case. The outcome, Theorem \ref{prop:crucari}, is an approximation formula for the desired number. The main terms are of the form 
\vskip 1mm

\centerline{$\displaystyle \frac{C B \log(B)^\beta}{{\rm N}(\frak p)}$}

\vskip 1mm

\noindent
with ${\rm N}(\frak p)$ the norm of $\frak p$ and $C, \beta$ some explicit constants depending on 
the global field $K$. The error terms are small as $N(\frak p)$ grows, uniformly in $a$. This comes for free in the number field case but requires additional effort in the function field  case.  

We refer to the statement of Theorem \ref{prop:crucari} for precise formulas.

\vskip 2mm
The rest of the paper has two main parts. The first one -- Section \ref{sec:heights-statements} and Section \ref{sec:crup} --, is devoted to our results on heights on global fields. The main result is Theorem \ref{prop:crucari}.  
It is stated in Section \ref{sec:heights-statements} and proved in Section \ref{sec:crup}. The second part is devoted to the Ekedahl--Poonen formula (Theorem \ref{thm:ekpoongen}) and the application to the integrally Hilbertian property of global fields (Theorem \ref{thm:denposHilIrrThm}). Section \ref{sec:Ekehdal-poonen} proves those results, except for an adjustement of a lemma of Poonen
that is postponed to Section \ref{sec:second-estimate}. Readers willing to admit Theorem \ref{prop:crucari}  and Corollary \ref{lem:boundinarithproBC} may go directly to the second part.

\section{Results on heights in global fields} \label{sec:heights-statements}
Let $K$ be a global field, that is, either a number field or a finite extension of $\F_q(t)$ such that $K \cap \overline{\F_q}=\F_q$. Denote by $K_0$ the field ${\mathbb Q}$ or ${\mathbb F}_q(t)$, and  by $\mathcal{O}_K$ the integral closure of ${\mathbb Z}$ or ${\mathbb{F}}_q[t]$ in $K$. Set $\fd=[K:K_0]$.

\subsection{Heights} \label{ssec:heights}
Let $\mathcal{M}_K$ be the set of places of $K$. 
For each $v\in \mathcal{M}_K$, let $v_0$ denote the place of $K_0$ below $v$, and let $K_v$ and ${K_0}_{v_0}$ be the corresponding completions. Set $\fd_v=[K_v:{K_0}_{v_0}]$, the local degree at $v$, and define the absolute value on $K$:
$$
|x|_v=
\begin{cases}
|\sigma_v(x)|&\textrm{if } $K$\textrm{ is a number field and }v\mid \infty,\\
{\rm N}(\fp)^{-\ord_v(x)/\fd_v}&\textrm{otherwise},
\end{cases}
$$
\noindent
For archimedean $v$, $\sigma_v:K \hookrightarrow \mathbb C$ denotes the associated embedding. For nonarchimedean $v$, $\fp$ is the associated prime ideal of $\cO_K$, and $\ord_v$ denotes the normalized valuation satisfying $\ord_v(K_v^\times)=\Z$.

We define the \emph{relative multiplicative Weil height} of  $x\in K$ by 
$$
H_K(x) = \prod_{v\in \mathcal{M}_K} \max\{1,|x|_v\}^{\fd_v},
$$
and the \emph{relative logarithmic Weil height} by 
$$
h_K(x) = \log_{q}(H_K(x))=\sum_{v\in \mathcal{M}_K} \fd_v \log_{q} \max(1,|x|_v),
$$
with the convention $q=e$ if $K_0=\mathbb{Q}$.
\subsection{Density} \label{ssec:density}
Let \(\mathcal{A}\subset \mathcal{O}_K^k\), where \(k\geq 1\). For \(B>0\) in the number field case, and \(B\in q^{\rho_K\mathbb{N}}\) in the function field case\footnote{We will see later that $h_K$ takes values in $\rho_K \N$ on $\cO_K$.} (with \(\rho_K\) as in \eqref{eq:defrho}), define

$$
\mathcal{A}_B
=
\left\{
\underline{m}\in\mathcal{A}
\;\middle|\;
\max_{1\leq i\leq k} H_K(m_i)\leq B
\right\}.
$$

Note that $\mathcal{O}_{K,B}^k:=((\mathcal{O}_K)^k)_B = ((\mathcal{O}_K)_B)^k$ and that $\mathcal{A}_B = \mathcal{A} \cap \mathcal{O}_{K,B}^k$.

\begin{defn}\label{defn:density}
The \emph{upper density} (resp., \emph{lower density}) of $\mathcal{A} \subset \mathcal{O}_K^k$ is given by the limit
$$
\mu_{{\rm sup}}(\mathcal{A})=\limsup_{B \rightarrow \infty}\frac{\# \mathcal{A}_B}{\#\mathcal{O}^k_{K,B}} \hskip 3mm ( \textrm{resp.},\, \mu_{{\rm inf}}(\mathcal{A})=\liminf_{B \rightarrow \infty}\frac{\# \mathcal{A}_B}{\#\mathcal{O}^k_{K,B}}).
$$ If $\mu_{{\rm sup}}(\mathcal{A})=\mu_{{\rm inf}}(\mathcal{A})$, the set $\mathcal{A}$ is said to \emph{have a density}, given by
$$
\mu(\mathcal{A})=\lim_{B \rightarrow \infty}\frac{\# \mathcal{A}_B}{\#\mathcal{O}^k_{K,B}}.  
$$
\end{defn}
\noindent
In the function field case, the limit $B \rightarrow \infty$ is understood to be taken over $B \in q^{\rho_K \N}$.

\subsection{Main result}
The main result of this section is the following estimate, which will be used in subsequent arguments. Its proof is postponed to Section \ref{sec:crup}. Let $\cS_\infty$ be the set of all infinite places in the number field case, and the set of all places above the place corresponding to the degree valuation in the function field case. When $K$ is a number field with signature $(r_1,r_2)$, we have $\#\cS_\infty=r_1+r_2$, and define
$$
E_{\cS_\infty}(B,\fp)=1 + \frac{1}{{\rm N}(\mathfrak{p})^{1/\fd}} B \log(B)^{\#\cS_\infty-2}.
$$
When $K$ is a function field, let 
\begin{equation}\label{eq:defrho}
\rho_K=\gcd\{\deg(v) \mid v \in \cS_\infty\},
\end{equation}
and define
$$
E_{\cS_\infty}(B,\fp)=
\begin{cases}
(\log_q(B))^{\#\cS_\infty}+ \frac{\deg(\fp)}{{\rm N}(\fp)^{1/\fd}}B(\log_q(B))^{\#\cS_\infty-2}& \textrm{if } \#\cS_\infty >1,\\
\log_q(B)+\frac{1}{{\rm N}(\fp)^{1/2}}\cdot B^{1/2}&\textrm{if } \#\cS_\infty=1.
\end{cases}
$$
Moreover, let
$$
\kappa_K=
\begin{cases}
\frac{2^{\#\cS_\infty}\pi^{r_2}}{(\#\cS_\infty-1)!|\Delta_K|^{1/2}}&\textrm{if }K\textrm{ is a number field},\\
\frac{\rho_K}{(\#\cS_\infty-1)! q^{g_K-1} \prod_{v \in \cS_\infty} \deg(v)} \left(\frac{q^{\rho_K}}{q^{\rho_K}-1}\right) \cdot \prod_{v \in \cS_\infty} \left( 1 - \frac{1}{q^{\deg(v)}}\right)&\textrm{if }K \textrm{ is a function field}.
\end{cases}
$$
\begin{thm}\label{prop:crucari} 
\label{lem:bounheigharithprog} Let $\mathfrak{p}$ be a nonzero ideal of $\mathcal{O}_K$ and let $a \in \mathcal{O}_K$. 
\vskip 1mm
\noindent 
(1) As $B \rightarrow \infty$, we have:
$$
\#(\mathfrak{p}+a)_B=  \frac{\kappa_K}{{\rm N}(\fp)}B(\log_q(B))^{\#\cS_\infty-1} + O_K( E_{\cS_\infty}(B,\mathfrak{p})). 
$$
\vskip 1mm
\noindent
(2) The subset $\mathfrak{p}+a \subset {\mathcal O}_K$ has density  
$$
\mu(\mathfrak{p}+a) = 1/{\rm N}(\mathfrak{p}).
$$
\end{thm}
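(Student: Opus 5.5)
Part (2) follows from part (1) applied twice, once to $\mathfrak{p}$ and once to the unit ideal $\mathcal{O}_K$ (with $a=0$). Dividing gives $\#(\mathfrak{p}+a)_B/\#\mathcal{O}_{K,B}\to 1/{\rm N}(\mathfrak{p})$. This works because the error terms $E_{\cS_\infty}(B,\fp)$ are $o(B(\log_q B)^{\#\cS_\infty-1})$ for fixed $\fp$: in the number field case the log power drops by one, in the function field case with $\#\cS_\infty>1$ likewise, and for $\#\cS_\infty=1$ one has $B^{1/2}=o(B)$. So everything rests on (1).

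\emph{Number field case.} Embed $\mathcal{O}_K$ into $K\otimes\Rr\cong\Rr^{r_1}\times\Cc^{r_2}$. Then $\mathfrak{p}+a$ is a translate of a lattice of covolume ${\rm N}(\fp)2^{-r_2}|\Delta_K|^{1/2}$. For $x\in\mathcal{O}_K$ the finite places contribute nothing to $H_K(x)$, so
\[
H_K(x)=\prod_{v\mid\infty}\max\{1,|\sigma_v(x)|\}^{\fd_v}.
\]
Hence $(\mathfrak{p}+a)_B$ is the set of lattice points of the translated lattice in the region $S_B=\{\underline{z}:\prod_v\max(1,|z_v|)^{\fd_v}\le B\}$. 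The volume of $S_B$ is computed by passing to the variables $u_v=\log\max(1,|z_v|)$. It equals $\frac{2^{r_1}(2\pi)^{r_2}}{(\#\cS_\infty-1)!}B(\log B)^{\#\cS_\infty-1}$ plus lower-order terms $O(B(\log B)^{\#\cS_\infty-2})$. Dividing by the covolume gives the main term $\kappa_K B(\log B)^{\#\cS_\infty-1}/{\rm N}(\fp)$.

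For the error term I would use a Lipschitz-parametrizability counting principle (Davenport, or Widmer's version for sets with Lipschitz boundary). It gives
\[
\#\bigl((\Lambda+a)\cap S_B\bigr)=\frac{{\rm Vol}(S_B)}{\det\Lambda}+O\Bigl(\sum_{j<\fd}\frac{V_j(S_B)}{\lambda_1\cdots\lambda_j}\Bigr),
\]
uniformly in the translate $a$. The successive minima satisfy $\lambda_1\gg{\rm N}(\fp)^{1/\fd}$, since any nonzero element $x\in\fp$ has $|{\rm N}(x)|\ge{\rm N}(\fp)$. The real work is to bound the projections $V_j(S_B)$: because $S_B$ is a hyperbolic-type region, these need a careful dyadic decomposition of the $u$-coordinates. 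The target is to obtain $1+{\rm N}(\fp)^{-1/\fd}B(\log B)^{\#\cS_\infty-2}$.

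\emph{Function field case.} Here $H_K(x)=q^{\deg(\text{polar divisor of }x)}$. So $(\mathcal{O}_K)_B$ is a union, over effective divisors $D$ supported on $\cS_\infty$ with $\deg D\le\log_q B$, of the sets of functions whose polar divisor is exactly $D$. The subset $\mathfrak{p}+a$ is then counted inside each Riemann--Roch space $L(D)$ as a coset of $L(D-\fp)$, or is empty. For $\deg D$ large compared with $2g-2+\deg\fp$, Riemann--Roch gives $\#L(D-\fp)=q^{\deg D-\deg\fp+1-g}$. Inclusion--exclusion over the places in $\cS_\infty$ isolates the exact polar divisor; this produces the factors $\prod_v(1-q^{-\deg v})$. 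Summing over $D$ with $\deg D\in\rho_K\N$ requires counting tuples $(n_v)$ with $\sum n_v\deg v=n$, which is $\frac{\rho_K n^{\#\cS_\infty-1}}{(\#\cS_\infty-1)!\prod\deg v}+O(n^{\#\cS_\infty-2})$. A geometric sum then yields the factor $q^{\rho_K}/(q^{\rho_K}-1)$ and the main term.

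The main obstacle is the range of small $D$, where $\deg D<\deg\fp+2g-1$ and Riemann--Roch is not exact. There I would bound the coset size by $\#L(D-\fp)\le\max(1,q^{\deg D-\deg\fp+1})$, uniformly in $a$. Summing over the $O(\deg(\fp)\,n^{\#\cS_\infty-2})$ divisors in this transition band gives the term $\deg(\fp){\rm N}(\fp)^{-1/\fd}B(\log_qB)^{\#\cS_\infty-2}$ together with the $(\log_q B)^{\#\cS_\infty}$ term. In the case $\#\cS_\infty=1$, the divisors $D$ form a single chain. Only a band of length about $\log_q B$ is near the threshold, and a Clifford-type bound $\#L(D-\fp)\le q^{(\deg D-\deg\fp)/2+1}$ in that range should give the $B^{1/2}{\rm N}(\fp)^{-1/2}$ error.
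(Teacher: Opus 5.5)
\textbf{Number field case.} Here your plan is the paper's proof. The error formula you wrote, with the projection volumes $V_j$ and the successive minima, is \cite[Theorem~1.3]{BW14}: Davenport's lemma for arbitrary lattices and definable families. It is not a Lipschitz-parametrization statement, and applied to the family $Z(B)-\underline{\sigma}(a)$ it is uniform in $a$. Combine it with $\lambda_1\cdots\lambda_j\ge\lambda_1\ge{\rm N}(\fp)^{1/\fd}$. The bound you call ``the real work'' is available in the literature: it is $V_j(Z(B))=O(B\log(B)^{r-1})$, which the paper simply quotes from \cite[Lemma~5.4]{Bar14}.

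You do mix two normalizations in the volume. With the Lebesgue measure on $\Cc\cong\Rr^2$ for which $\det\underline{\sigma}(\fp)=2^{-r_2}|\Delta_K|^{1/2}{\rm N}(\fp)$, the set $\{z:\max\{1,|z|\}^2\le T\}$ has area $\pi T$. So the leading coefficient of ${\rm Vol}(Z(B))$ is $2^{r_1}\pi^{r_2}/r!$; your $(2\pi)^{r_2}$ would give $2^{r_2}\kappa_K$.

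\textbf{Function field case: a different route.} The skeleton is the paper's: exact polar divisors, inclusion--exclusion with Riemann--Roch, counting tuples, and a geometric sum. But you treat the translate $a$ by a genuinely different and cleaner route.

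The paper splits $\mathbb{B}_\fb$ according to whether $b_v>\max\{0,-\ord_v(a)\}$ for all $v$. On $\mathbb{B}^{(0)}_\fb$ it uses $(\fp+a)_{=\underline b}-a=\fp_{=\underline b}$. It bounds the remaining pieces with exponents involving $h_K(a)-R(a)$ (Lemmas~\ref{lem:Bb} and~\ref{lem:Bb11}). It then needs Lemma~\ref{lem:ringintclas} to replace $a$ by a representative of height $\le\deg\fp+O(1)$. The factor $\deg(\fp){\rm N}(\fp)^{-1/\fd}$ in $E_{\cS_\infty}$ is the price of this.

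Your coset description avoids all of that, but you must state the fact that makes it work. If $\deg D'\ge\deg\fp+2g_K-1$, Riemann--Roch gives $\ell(D')-\ell(D'-\sP)=\deg\fp$. Hence $\mathscr{L}(D')\to\cO_K/\fp$ is onto, and $\#\bigl((\fp+a)\cap\mathscr{L}(D')\bigr)=q^{\ell(D'-\sP)}$ for \emph{every} $a$; your ``or is empty'' alternative never occurs there. Inclusion--exclusion then gives exactly $\prod_{v\in{\rm supp}(D)}(1-q^{-\deg v})\,q^{\deg D-\deg\fp-g_K+1}$ points with polar divisor $D$, independently of $a$, once $\deg D\ge\deg\fp+2g_K-1+\sum_{v\in\cS_\infty}\deg v$.

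\textbf{Error bookkeeping.} Your accounting does not match this method. Your transition band is defined by $\deg D$ alone, so it contains $\asymp n^{\#\cS_\infty-1}$ divisors of each degree $n$, not $O(\deg(\fp)n^{\#\cS_\infty-2})$. No ${\rm N}(\fp)^{-1/\fd}$ arises. What your method actually yields is the following.
\begin{itemize}
\item Each $D$ below the threshold carries at most $\#\mathscr{L}(D-\sP)\le\max\{1,q^{\deg D-\deg\fp+1}\}=O_K(1)$ points. This gives $O((\log_qB)^{\#\cS_\infty})$ in total.
\item Divisors without full support, and the $O(n^{\#\cS_\infty-2})$ error in the tuple count, contribute $O(B(\log_qB)^{\#\cS_\infty-2}/{\rm N}(\fp))$.
\end{itemize}
This implies the stated bound whenever $\deg\fp\ge1$. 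For $\#\cS_\infty=1$ it gives $O(\log_qB)$, with no Clifford argument needed.

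For $\fp=\cO_K$ and $\#\cS_\infty\ge2$, the printed error degenerates to $(\log_qB)^{\#\cS_\infty}$. Neither your argument nor the paper's yields this, and it is false. For $K=\F_q(t)(\sqrt{t^2+1})$ with $q$ odd, one has $\cO_K=\F_q[u,u^{-1}]$, $\kappa_K=q-1$ and $\#\cO_{K,B}=(q-1)B\log_qB+qB$ for $B\in q^{\N}$. There $\deg(\fp)$ should be read as $1+\deg(\fp)$. Part~(2) is unaffected.
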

\noindent
In the function field case (see Theorem \ref{cor:funcfiel}), we prove a more general and precise result (counting with ``exact height'') that is of independent interest, even in the simplest case $a=0$ and $\fp=\cO_{\cS}$, where $\cO_{\cS}$ denotes the ring of $\cS$-integers (see Definition \ref{defn:ringofint}).

As a consequence of Theorem \ref{lem:bounheigharithprog}, we obtain the following result, which will be used later. 
\begin{cor}\label{lem:boundinarithproBC} Assume $\fd\geq 2$ and let $0 <\varepsilon <1/\fd$. Define
$$
\theta_{\varepsilon}(B):=\log_q(B)^{\frac{1}{1-\varepsilon}}.
$$
Then there exists a function 
$$
D_{\varepsilon,B}=
o\left(\#\mathcal{O}_{K,B} \cdot \frac{\log_q(\theta_{\varepsilon}(B))}{\log_q(B)}\right), \quad \textrm{ as } B\rightarrow \infty,
$$ with the property that for all suitably large $B$, we have 
$$
\#(\fp+a)_B \leq D_{\varepsilon,B},
$$
for all nonzero ideal $\mathfrak{p} \subset \mathcal{O}_K$ with 
$
{\rm N}(\mathfrak{p})>\theta_{\varepsilon}(B)
$ and all $a \in \mathcal{O}_K$.
\end{cor}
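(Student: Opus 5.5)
We would deduce Corollary \ref{lem:boundinarithproBC} directly from Theorem \ref{lem:bounheigharithprog}(1), using the fact that the $O_K$-constant there is uniform in $\fp$ and $a$. First apply Theorem \ref{lem:bounheigharithprog}(1) with $\fp=\cO_K$, $a=0$. This gives
$$\#\mathcal{O}_{K,B}\asymp B(\log_q B)^{\#\cS_\infty-1}.$$
The main term dominates: in the number field case $E_{\cS_\infty}$ has one fewer log factor, and in the function field case either $B^{1/2}$ or $(\log_q B)^{\#\cS_\infty}$ is negligible. The target is therefore a bound of the form $o\bigl(B(\log_q B)^{\#\cS_\infty-2}\log_q\theta_\varepsilon(B)\bigr)$. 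Since $\log_q\theta_\varepsilon(B)=\frac{1}{1-\varepsilon}\log_q\log_q B$, the target is $o\bigl(B(\log_q B)^{\#\cS_\infty-2}\log_q\log_q B\bigr)$.

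Next, for $\fp$ with ${\rm N}(\fp)>\theta_\varepsilon(B)$, we bound each term of Theorem \ref{lem:bounheigharithprog}(1) separately. The main term is at most
$$\kappa_K B(\log_q B)^{\#\cS_\infty-1}/\theta_\varepsilon(B)=\kappa_K B(\log_q B)^{\#\cS_\infty-1-\frac{1}{1-\varepsilon}}.$$
Since $\frac{1}{1-\varepsilon}>1$, this is $o(B(\log_q B)^{\#\cS_\infty-2})$, which is comfortably within the target.

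For the error term we use ${\rm N}(\fp)^{1/\fd}>\theta_\varepsilon(B)^{1/\fd}\to\infty$.
\begin{itemize}
\item Number field case: the term $B\log(B)^{\#\cS_\infty-2}/{\rm N}(\fp)^{1/\fd}$ is $o(B\log(B)^{\#\cS_\infty-2})$, and the constant $1$ is negligible.
\item Function field case with $\#\cS_\infty=1$: the term $B^{1/2}/{\rm N}(\fp)^{1/2}$ is $o(B)$, and $\log_q B$ is negligible.
\item Function field case with $\#\cS_\infty>1$: here there is the extra factor $\deg(\fp)$. We note ${\rm N}(\fp)=q^{\deg\fp}$, so $x\mapsto x/q^{x/\fd}$ is eventually decreasing in $\deg\fp$. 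Hence $\deg(\fp)/{\rm N}(\fp)^{1/\fd}\le \log_q\theta_\varepsilon(B)/\theta_\varepsilon(B)^{1/\fd}$ once $\theta_\varepsilon(B)$ is large, which tends to $0$. The term $(\log_q B)^{\#\cS_\infty}$ is $o(B)$.
\end{itemize}

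Finally we define $D_{\varepsilon,B}$ as the sum of these explicit suprema, times the uniform implied constant:
$$D_{\varepsilon,B}=C_K\Bigl(\frac{B(\log_q B)^{\#\cS_\infty-1}}{\theta_\varepsilon(B)}+\sup_{{\rm N}(\fp)>\theta_\varepsilon(B)}E_{\cS_\infty}(B,\fp)\Bigr).$$
By the estimates above, $D_{\varepsilon,B}$ is $o$ of the target. Each bound is at least as strong as the claimed $o(\#\mathcal{O}_{K,B}\log_q\theta_\varepsilon/\log_q B)$, since the factor $\log_q\log_q B$ only helps.

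The main obstacle is uniformity: the statement of Theorem \ref{lem:bounheigharithprog} must give an implied constant independent of $\fp$ and $a$, valid for all $B\ge B_0$ with $B_0$ independent of $\fp$. The introduction asserts this uniformity, noting it requires extra work in the function field case, and we would invoke it. The hypotheses $\fd\ge2$ and $\varepsilon<1/\fd$ do not appear to be needed for this deduction. We would keep them as stated, presumably for later use in adapting Poonen's lemma.
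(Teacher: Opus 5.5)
Your argument is correct and is essentially the paper's proof. Both apply Theorem~\ref{prop:crucari}(1) with its constant uniform in $\fp$ and $a$, bound the main term by $\kappa_K B(\log_q B)^{\#\cS_\infty-1}/\theta_\varepsilon(B)$ and the error term termwise, and compare with $\#\mathcal{O}_{K,B}\asymp B(\log_q B)^{\#\cS_\infty-1}$. The only difference is in how the factor $\deg(\fp)/{\rm N}(\fp)^{1/\fd}$ is handled. The paper bounds it by $O({\rm N}(\fp)^{-\varepsilon})$, which is where $\varepsilon<1/\fd$ enters; this gives the explicit bound $D_{\varepsilon,B}=O\bigl(B(\log_q B)^{\#\cS_\infty-1-\frac{1}{1-\varepsilon}}\bigr)$, in which the main-term and error-term contributions balance exactly. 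Your monotonicity argument gives only an $o(\cdot)$ bound, but, as you correctly note, it needs only $0<\varepsilon<1$; the condition $\varepsilon<1/\fd$ is really used later, in \S\ref{sec:second-estimate}, to ensure ${\rm N}(\fp)\le {\rm N}(\fp)^{1/\fd}\log_q B$ for ${\rm N}(\fp)\le\theta_\varepsilon(B)$.
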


\begin{proof} Since $\varepsilon < \frac{1}{\fd}$, we have 
\begin{equation}\label{eq:majexpN}
\frac{1}{{\rm N}(\fp)^{1/\fd}} \leq \frac{\deg(\fp)}{{\rm N}(\fp)^{1/\fd}} =O\left(\frac{1}{{\rm N}(\fp)^{\varepsilon}}\right).
\end{equation}
Then, by Theorem \ref{prop:crucari} and \eqref{eq:majexpN}, there exist constants $c_{K,\varepsilon}^{(1)},c_{K,\varepsilon}^{(2)},c_{K,\varepsilon}^{(3)}>0$ such that
\begin{align*}
\# (\mathfrak{p}+a)_B \leq D_{\varepsilon,B}&:=
c_{K,\varepsilon}^{(1)} B(\log_q(B))^{\#\cS_\infty-1-\frac{1}{1-\varepsilon}}
+c_{K,\varepsilon}^{(2)} (\log_q(B))^{\#\cS_\infty-\frac{1}{1-\varepsilon}}+c_{K,\varepsilon}^{(3)}B(\log_q(B))^{\#\cS_\infty-2-\frac{\varepsilon}{1-\varepsilon}}.
\end{align*}
 whenever ${\rm N}(\mathfrak{p}) \geq \theta_{\varepsilon}(B) \geq 2$. Moreover, $$
 D_{\varepsilon,B}=O_{K,\varepsilon}\left( B(\log_q(B))^{\#\cS_\infty-1-\frac{1}{1-\varepsilon}}\right),
 $$ and hence
\vskip 1mm
\hskip 20mm
$$ \frac{D_{\varepsilon,B}}{\#\mathcal{O}_{K,B}}=
O_{K,\varepsilon}(\log_q(B)^{-\frac{1}{1-\varepsilon}}).
$$ 
\noindent
Therefore,
\vskip 1mm

$$ 
\frac{D_{\varepsilon,B} \cdot \frac{\log_q(B)}{\log_q(\theta_{\varepsilon}(B))}}{\#\mathcal{O}_{K,B}}=
O_{K,\varepsilon}\left(\log_q(B)^{-\frac{\varepsilon}{1-\varepsilon}}/\log_q(\log_q(B))\right) \xrightarrow{B \rightarrow \infty} 0.
$$ 
\end{proof} 

\subsection{Auxiliary lemma} 

The following lemma will be needed.
\begin{lem}\label{lem:funfunc}
Let $\underline{T}=(T_1,\dots,T_k)$ with $k \geq 1$, and let $Q(\underline{T}) \in \mathcal{O}_K[\underline{T}]$ be a polynomial. Then there exists a constant $\gamma_{Q,\fd}$, depending only $Q$ and $\fd$, such that, for every $\underline{m} \in \mathcal{O}^k_{K,B}$ with $Q(\underline{m}) \neq 0$, the number $\nu_A$ of prime ideals $\fp$ satisfying 
$${\rm N}(\fp) \geq A\textrm{ and }\fp \mid Q(\underline{m})
$$ is at most 
$$
\frac{\gamma_{Q,\fd} \log_q(B)}{\log_q(A)}.
$$ 
\end{lem}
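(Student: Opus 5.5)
The idea is to compare the norm of the product of all large prime divisors of $Q(\underline{m})$ with the norm of the whole principal ideal $(Q(\underline{m}))$, and to bound the latter by a power of $B$. If $\fp_1,\dots,\fp_{\nu_A}$ are distinct primes with ${\rm N}(\fp_i)\geq A$ dividing $Q(\underline{m})$, then $\fp_1\cdots\fp_{\nu_A}$ divides the ideal $Q(\underline{m})\mathcal{O}_K$. By multiplicativity of the norm,
$$A^{\nu_A}\leq \prod_i {\rm N}(\fp_i)\leq {\rm N}(Q(\underline{m})\mathcal{O}_K).$$
It therefore suffices to show ${\rm N}(Q(\underline{m})\mathcal{O}_K)\leq (c\,B)^{\gamma}$ for constants depending only on $Q$ and $\fd$. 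Taking $\log_q$ would then give $\nu_A\leq \gamma(\log_q B+\log_q c)/\log_q A$. We may assume $B$ is bounded below, say $B\geq q$ (the only case used, $\log_q B\geq 1$), and then absorb $\log_q c$ into the constant.

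For the norm bound, use $Q(\underline{m})\in\mathcal{O}_K\setminus\{0\}$ and the product formula. With the normalizations of \S\ref{ssec:heights}, we have ${\rm N}(x\mathcal{O}_K)=\prod_{v\nmid\infty}|x|_v^{-\fd_v}=\prod_{v\in\cS_\infty}|x|_v^{\fd_v}\leq H_K(x)$ for nonzero $x\in\mathcal{O}_K$. In the function field case the places "at infinity" are those of $\cS_\infty$. So it is enough to bound $H_K(Q(\underline{m}))$.

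Write $Q=\sum_{\underline{i}} a_{\underline{i}}\underline{T}^{\underline{i}}$, with $D=\deg Q$ and $M$ monomials. For each $v\in\cS_\infty$ we have $|Q(\underline{m})|_v\leq M_v\max_{\underline{i}}|a_{\underline{i}}|_v\prod_j\max(1,|m_j|_v)^{D}$. Here $M_v=M$ at archimedean places and $M_v=1$ at ultrametric ones. Raise this to the power $\fd_v$ and take the product over $v\in\cS_\infty$. Using $\sum_v\fd_v=\fd$, the factor $\prod_v\max(1,|m_j|_v)^{\fd_v}\leq H_K(m_j)\leq B$, and the integrality of the coefficients, we get
$$H_K(Q(\underline{m}))\leq M^{\fd}\Bigl(\prod_{\underline{i}}H_K(a_{\underline{i}})\Bigr)B^{kD}.$$
Hence $\gamma$ can be taken of size about $kD$ plus a term accounting for $\log_q$ of the constant $C_Q=M^{\fd}\prod H_K(a_{\underline{i}})$.

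The only delicate points are bookkeeping. First, the constant depends on $Q$ and $K$ through the heights of the coefficients, which is allowed since the statement lets $\gamma_{Q,\fd}$ depend on $Q$. Second, we must handle small $B$ or $A\leq 1$: for $A\leq 1$ the statement is vacuous or needs $A>1$, and one should assume $A>1$ and $B\geq q$ as in the applications. Third, we must check the identity relating the ideal norm to the product of $|x|_v^{-\fd_v}$ over finite places under the paper's normalization, where $|x|_v^{\fd_v}={\rm N}(\fp)^{-\ord_v(x)}$ gives exactly this. I expect no real obstacle beyond this verification.
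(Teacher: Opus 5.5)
Your proposal is correct and follows the paper's proof. The paper likewise writes $A^{\nu_A}\le |{\rm N}_{K/K_0}(Q(\underline{m}))|\le B^{\gamma_{Q,\fd}}$ and takes logarithms; it calls the norm bound a ``straightforward estimate'', which you make explicit via the product formula and $H_K$. Your caveats that $A>1$ and $B$ must be bounded below (e.g.\ $B\geq q$, to absorb the constant $C_Q$) are right and actually needed for the statement as written. They are harmless, since the lemma is only applied for large $B$ with $A=\theta_\varepsilon(B)$.
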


\begin{proof}
A straightforward estimate shows that there exists $\gamma_{Q,\fd}>0$ such that 
$$
|{\rm N}_{K/K_0}(Q(\underline{m}))| \leq  B^{\gamma_{Q,\fd}}.
$$ On the other hand,
$$
A^{\nu_A} \leq |{\rm N}_{K/K_0}(Q(\underline{m}))|.  
$$ Taking logarithms yields the desired bound.
\end{proof}

\section{Proof of Theorem \ref{prop:crucari}}\label{sec:crup} 
In this section, we prove Theorem \ref{prop:crucari}.

\subsection{Proof in the number field case} Assume that $K$ is a number field with signature $(r_1,r_2)$ and let $r=r_1+r_2-1=\#\cS_\infty-1$. Let $\sigma_1,\dots,\sigma_{r_1}: K \hookrightarrow \mathbb{R}$ and $\sigma_{r_1+1},\dots, \sigma_{r_1+r_2}: K \hookrightarrow \mathbb{C}$ denote the real and complex embeddings of $K$, respectively. For any element $\alpha \in \mathcal{O}_K$, define the embedding vector 
$$
\underline{\sigma}(\alpha)=(\sigma_1(\alpha),\dots,\sigma_{r_1+r_2}(\alpha)) \in \mathbb{R}^{r_1} \times \mathbb{C}^{r_2}.
$$ 
Define 
\begin{equation*}
Z(B)=\left\lbrace \underline{x}  \in \Rr^{r_1} \times \Cc^{r_2} \Bigg\vert \prod_{i=1}^{r_1+r_2} \max \{ 1 ,|x_i| \}^{d_i} \leq B\right\rbrace,
\end{equation*} where
$$
d_i=
\begin{cases}
1 & \textrm{if } 1 \leq i \leq r_1,\\
2 & \textrm{if } r_1+1 \leq i \leq r_1+r_2.
\end{cases}
$$
Observe that 
\begin{align*}
\#(\mathfrak{p}+a)_B&= \#\left( \underline{\sigma}(\mathfrak{p}+a) \cap Z(B)\right)\\
&=\#\left(\left( \underline{\sigma}(\mathfrak{p})+ \underline{\sigma}(a) \right) \cap Z(B) \right)\\
&=\#\left( \underline{\sigma}(\mathfrak{p}) \cap \left( Z(B)-\underline{\sigma}(a) \right) \right).
\end{align*}

Identifying $\mathbb{C}$ with $\mathbb{R}^2$, define the semi-algebraic set:
$$
Z=\{ (B,\underline{y},\underline{x}) \in \mathbb{R} \times (\mathbb{R}^{r_1} \times \mathbb{C}^{r_2})^2 \mid \underline{x} \in Z(B)- \underline{y}\}.
$$
We aim to apply \cite[Theorem 1.3]{BW14} to the lattice $\Lambda=\underline{\sigma}(\mathfrak{p})$ and the family of fibers
$$
Z_{S}=\{\underline{x} \in \mathbb{R}^{r_1} \times \mathbb{C}^{r_2} \mid (B,\underline{y},\underline{x}) \in Z\}=Z(B)-\underline{y},
$$ where $\underline{y}=\underline{\sigma}(a)$ and $S=(B,\underline{y})$. 
From that result, there exists $c_Z \in \mathbb{R}$ -- depending on $Z$, and hence on $r_1$ and $r_2$ -- such that 
\begin{equation}\label{eq:Ztz}
\left\vert \#(\mathfrak{p}+a)_B - \frac{{\rm Vol}(Z_S)}{\det \underline{\sigma}(\mathfrak{p})} \right\vert\leq\left\vert |Z_S \cap \underline{\sigma}(\mathfrak{p})|- \frac{{\rm Vol}(Z_S)}{\det \underline{\sigma}(\mathfrak{p})} \right\vert \leq c_Z \left(1+\sum_{j=1}^{n-1}\frac{V_j(Z_S)}{\lambda_1(\underline{\sigma}(\mathfrak{p}))}\right),
\end{equation}

\noindent
where ``${\rm Vol}$'' and ``${\rm det}$'' stand for volume and determinant respectively and  $V_1(Z_S), \ldots, V_{n-1}(Z_S)$ and $\lambda_1(\underline{\sigma}(\mathfrak{p}))$ are
described below.
\vskip 1mm

Note first that ${\rm Vol}(Z_{S})$ $={\rm Vol}(Z(B))$. Similarly, $V_j(Z_{S})=V_j(Z(B))$, where $V_j(Z_{S})$ denotes the sum of the $j$-dimensional volumes of the orthogonal projections of $Z_{S}$ on the various coordinate spaces obtained by equating any $n - j$ coordinates to zero, and $V_0(Z_{S}) = 1$, by convention; see \cite{BW14} for more details. Moreover, by \cite[Lemma 5.4.]{Bar14}, assuming $B \geq 2$, we have:
$$
V_j(Z_{S}) \leq O_\fd\left( B \log(B)^{r-1} \right)=O_\fd \left( \frac{{\rm Vol}(Z(B))}{\log(B)} \right).
$$

\noindent
The term $\lambda_1(\underline{\sigma}(\mathfrak{p}))$ is the first of the successive minima of the lattice $\underline{\sigma}(\mathfrak{p})$. The AM-GM inequality gives: 

\begin{align*}
\lambda_1(\underline{\sigma}(\mathfrak{p}))&:= \min \{ |\underline{v}| \mid \underline{v} \in \underline{\sigma}(\mathfrak{p}), \underline{v} \neq \underline{0}\}\geq {\rm N}(\mathfrak{p})^{1/\fd};
\end{align*}
(see, e.g., \cite[Lemma 11.1]{Wid16}). Furthermore, 
the determinant of the lattice $\underline{\sigma}(\mathfrak{p})$ is 
$$\det(\underline{\sigma}(\mathfrak{p}))=2^{-r_2}|\Delta_K|^{1/2}{\rm N}(\mathfrak{p}).
$$ 

 \noindent
 Since the number of possible $Z$ (determined by $r_1,r_2$ with $r_1+2r_2=\fd$) is finite, it follows that
$$
\#(\mathfrak{p}+a)_B = \frac{2^{r_2} {\rm Vol(Z(B))}}{|\Delta_K|^{1/2}{\rm N}(\mathfrak{p})}+O_\fd \left( 1+ \frac{{\rm Vol}(Z(B))}{{\rm N}(\mathfrak{p})^{1/\fd} \log(B)} \right).
$$
To conclude (1) of Theorem \ref{lem:bounheigharithprog}, a classical integration computation proves that
$$
{\rm Vol}(Z(B))=\frac{2^{r_1} \pi^{r_2} }{r!} B \log(B)^r+O_\fd(B \log(B)^{r-1}).
$$

The special case that $a=0$ and $\mathfrak{p} = \mathcal{O}_K$ yields a similar formula for $\# \mathcal{O}_{K,B}$. The second assertion of Theorem \ref{lem:bounheigharithprog} follows then immediately by Definition \ref  {defn:density}.

\subsection{Proof in the function field case} 

\subsubsection{Setup of the proof}
Assume that $K/\F_q(t)$ is a function field such that $K \cap \overline{\F_q}=\F_q$. The \emph{degree} $\deg(v)$ of $v$ is defined as the degree of the residue field of $K$ at $v$ over $\F_q$. 

As usual, $\ord_v(0)=\infty > m$ for all $m \in \Z$. Each $\underline{x} \in K^n$ defines a divisor
$$
(\underline{x})=\div(\underline{x})=\sum_{v \in \mathcal{M}_K} \ord_v(\underline{x}) \cdot v.
$$
Denote by ${\rm Div}(K)$ the set of all divisors of $K$. For a divisor $A=\sum_{v \in \cM_K } A_v \cdot v$, its \emph{support} is 
$$
{\rm supp}(A)=\{ v \in \cM_K \mid A_v \neq 0\},$$ 
and  write 
$$A_0=\sum_{v \in \cM_K, A_v>0} A_v \cdot v \quad \textrm{and} \quad A_\infty=\sum_{v \in \cM_K, A_v<0} -A_v \cdot v
$$ for its \emph{zero divisor} and \emph{pole divisor}, respectively, so that 
$$
A=A_0-A_\infty.
$$

Let $\cS$ be a nonempty finite set of places of $K$ and let 
$$
\rho_{\cS}=\gcd\{\deg(v)\mid v\in\cS\}.
$$
\begin{defn}\label{defn:ringofint}
The \emph{ring of $\cS$-integers} of $K$ is the set of elements of $K$ that have no poles outside of $\cS$, i.e., 
$$
\mathcal{O}_{\cS}=\{\alpha \in K \mid \ord_v(\alpha) \geq 0\,\,\textrm{for all } v \notin \cS \}=\{\alpha \in K \mid {\rm supp}((\alpha)_\infty) \subset \cS\}.
$$ 
\end{defn}

\begin{rmk} For $\alpha \in K^\times$, we have 
\begin{equation}\label{eq:hkdeg}
h_K(\alpha)=\deg((\alpha)_\infty)=-\deg(\div(1,\alpha))=\sum_{v \in \mathcal{M}_K}\max\{0,-\ord_v(\alpha)\} \deg(v).
\end{equation}
\end{rmk}
As mentioned earlier, we obtain a stronger version of Theorem \ref{prop:crucari} by counting points of exact height. In fact, since $h_K(\cO_\cS) \subset \N \rho_{\cS}$, the possible height values are naturally indexed by the nonnegative integers, and it is more convenient to count the number of points of logarithmic height exactly $\mathfrak{b}\rho_{\cS}$. To this end, for $\mathfrak{A} \subset \cO_K$ and $\fb$, introduce the following counting function:

$$
N^{=}_{h_K}(\mathfrak{A},\mathfrak{b}\rho_{\cS})
=
\#\{\alpha\in\cO_K\mid h_K(\alpha)=\mathfrak{b}\rho_{\cS}\}.
$$
\begin{thm}\label{cor:funcfiel} 
Let $\fp$ be an ideal of $\cO_\cS$, let $a \in \cO_\cS$ and set $Q=\sum_{v \in \cS} v \in {\rm Div}(K)$. Then for $\mathfrak{b}\in \N$,
$$
N^{=}_{h_K}(\mathfrak{p}+a,\mathfrak{b}\rho_\cS) =  \frac{\tilde \kappa_{K,\cS}}{{\rm N}(\fp)} \cdot q^{\mathfrak{b}\rho_\cS} \cdot (\mathfrak{b}\rho_\cS)^{\#\cS-1}+ O_{K,\cS}( \tilde E_{\cS}(\mathfrak{b},\mathfrak{p})), 
$$ where
$$
\tilde \kappa_{K,\cS}=\frac{\rho_\cS}{(\#\cS-1)! q^{g_K-1} \prod_{v \in \cS} \deg(v)} \cdot \prod_{v \in \cS} \left( 1 - \frac{1}{q^{\deg(v)}}\right),
$$ and
$$
\tilde E_{\cS}(\mathfrak{b},\fp)=
\begin{cases}
\mathfrak{b}^{\#\cS-1}+ \frac{q^{\mathfrak{b}\rho_\cS} \deg(\fp)}{{\rm N}(\fp)^{1/\deg(Q)}}\mathfrak{b}^{\#\cS-2}& \textrm{if } \#\cS >1,\\
1+\frac{1}{{\rm N}(\fp)^{1/2}}\cdot q^{\frac{\mathfrak{b}\rho_\cS}{2}}&\textrm{if } \#\cS=1.
\end{cases}
$$
\end{thm}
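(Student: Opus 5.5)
We count $\alpha\in\fp+a$ with $\deg((\alpha)_\infty)=\fb\rho_\cS$ by decomposing according to the pole divisor and applying Riemann--Roch on the curve of $K$. Every $\alpha\in\cO_\cS$ with $\alpha\neq 0$ has $(\alpha)_\infty=D$, an effective divisor supported on $\cS$. Write $D=\sum_{v\in\cS}n_v v$ with $\sum_v n_v\deg(v)=\fb\rho_\cS$. The count of exact height is then
$$
\sum_{\underline n}\#\{\alpha\in\fp+a \mid \ord_v(\alpha)=-n_v \ \forall v\in\cS\}.
$$

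For a fixed $\underline n$, set $D=\sum n_v v$ and write $\fp=\cO_\cS\cap L(-P)$, where $P$ is the effective divisor supported outside $\cS$ with $\deg P=\log_q{\rm N}(\fp)$. The set $\{\alpha\in\fp+a \mid (\alpha)_\infty\le D\}$ is a coset of $L(D-P)$, provided it is nonempty, inside $L(D)$. The key steps are as follows.

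First, for $\deg D$ large (say $\deg D-\deg P>2g_K-2$), this set is nonempty. Indeed $L(D)\to L(D)/L(D-P)\cong\cO_\cS/\fp$ is surjective by Riemann--Roch. In that case it has exactly $q^{\deg D-\deg P+1-g_K}={\rm N}(\fp)^{-1}q^{\deg D+1-g_K}$ elements.

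Second, we pass from $\le D$ to exact pole orders by inclusion--exclusion over subsets $J\subset\cS$, replacing $D$ by $D-\sum_{v\in J}v$. This produces the factor $\prod_{v\in\cS}(1-q^{-\deg v})$.

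Third, we sum over the lattice points $\underline n\in\N^{\cS}$ with $\sum n_v\deg(v)=\fb\rho_\cS$. Their number is
$$
\frac{\rho_\cS}{\prod_v\deg(v)}\,\frac{(\fb\rho_\cS)^{\#\cS-1}}{(\#\cS-1)!}+O(\fb^{\#\cS-2}),
$$
by a standard Ehrhart/partition count using $\gcd=\rho_\cS$. Multiplying by $q^{\fb\rho_\cS+1-g_K}/{\rm N}(\fp)$ gives the main term with $\tilde\kappa_{K,\cS}$. The error $O(\fb^{\#\cS-2})$ times $q^{\fb\rho_\cS}/{\rm N}(\fp)$ is absorbed into $\tilde E_\cS$.

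The main obstacle is the set of tuples $\underline n$ for which some $n_v$ is small relative to $\deg P$. For these, Riemann--Roch fails to give the exact coset count, and uniformity in $a$ requires an upper bound. We handle them with the crude estimate: a coset of $L(D-P)$ inside $L(D)$ has at most $\max(1,q^{\ell(D-P)})$ elements, and $\ell(D-P)\le\max(0,\deg D-\deg P+1)$.

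The remaining difficulty is that the exact formula needs $\deg D-\deg P>2g_K-2$, while the main term is only meaningful when $q^{\fb\rho_\cS}\gg{\rm N}(\fp)$. We treat the two regimes separately.

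When ${\rm N}(\fp)\ge q^{\fb\rho_\cS/\deg Q}$ roughly, everything is bounded by 1 per $\underline n$. This gives the $\fb^{\#\cS-1}$ term, and the main term is itself of that size in this regime.

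Otherwise, the exceptional $\underline n$ lie near the boundary of the simplex, within distance $O(\deg\fp)$ in some coordinate. Their number is $O(\deg(\fp)\,\fb^{\#\cS-2})$. Each contributes at most $q^{\fb\rho_\cS}/{\rm N}(\fp)$, and we convert ${\rm N}(\fp)^{-1}\le{\rm N}(\fp)^{-1/\deg Q}$. This yields $\tilde E_\cS$.

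For $\#\cS=1$ there is no sum over $\underline n$; the only issue is the Riemann--Roch range. The threshold $\deg\fp\approx\fb\rho_\cS/2$ (comparing $1$ with $q^{\fb\rho_\cS}/{\rm N}(\fp)$) produces the error term $1+{\rm N}(\fp)^{-1/2}q^{\fb\rho_\cS/2}$.
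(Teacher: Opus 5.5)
Your core mechanism is correct, and it takes a genuinely different route from the paper's. The paper translates by $a$. Only for $\underline b$ with $b_v>\max\{0,-\ord_v(a)\}$ for all $v\in\cS$ does it get $(\fp+a)_{=\underline b}-a=\fp_{=\underline b}$ and an asymptotic (Lemma~\ref{cor:22as}). The other $\underline b$ are bounded via Riemann--Roch spaces of divisors built from $(a)_\infty$ (Lemmas~\ref{lem:Bb} and~\ref{lem:Bb11}). Uniformity is then recovered by choosing a representative $a\in\mathscr{L}(mQ)$ (Lemma~\ref{lem:ringintclas}). The factors $\deg(\fp)$ and ${\rm N}(\fp)^{-1/\deg Q}$ in $\tilde E_\cS$ come from $1+h_K(a)\ll\deg\fp$ and $h_K(a)-R(a)\le(1-1/\deg Q)\deg\fp+O(1)$.

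You never translate. The set $(\fp+a)\cap\mathscr{L}(D')$ is empty or a coset of $\mathscr{L}(D'-P)$, and $\mathscr{L}(D')\to\cO_\cS/\fp$ is onto once $\deg(D'-P)\ge 2g_K-1$; this is the content of Lemma~\ref{lem:ringintclas}, used directly. So whenever $\deg(D-e_J-P)\ge 2g_K-1$, with $e_J=\sum_{v\in J}v$, the set $(\fp+a)\cap\mathscr{L}(D-e_J)$ has exactly $q^{\ell(D-e_J-P)}$ elements, for every $a$. This removes the split according to $\ord_v(a)$ and the choice of representative. Once the bookkeeping below is fixed, it yields the sharper error $O_{K,\cS}(\fb^{s-1}+\fb^{s-2}q^{\fb\rho_\cS}/{\rm N}(\fp))$ for $s=\#\cS\ge 2$ and $O_{K,\cS}(1)$ for $s=1$, which implies $\tilde E_\cS$.

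The closing case analysis, however, is wrong as written.
\begin{itemize}
\item \emph{No boundary layer of width $O(\deg\fp)$ is needed.} Riemann--Roch applies to $\mathscr{L}(D-e_J-P)$ as soon as $\fb\rho_\cS-\sum_{v\in J}\deg v-\deg\fp\ge 2g_K-1$, and $\deg D=\fb\rho_\cS$ for every $\underline n$. Since $P$ is supported off $\cS$, no tuple is exceptional because some $n_v$ is small relative to $\deg P$.
\item \emph{Tuples with some $n_v=0$ are the only ones needing care.} There $(\alpha)_\infty=D$ means $\ord_v(\alpha)\ge 0$ (not $=0$), and the inclusion--exclusion runs over $J\subset\{v: n_v>0\}$. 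There are $O(\fb^{s-2})$ such tuples, each contributing $O(q^{\fb\rho_\cS}/{\rm N}(\fp))$.
\item \emph{The argument for your first regime fails.} Suppose $\deg Q\ge 2$ and $\fb\rho_\cS/\deg Q\le \deg\fp\le \fb\rho_\cS-\deg Q-2g_K+1$. Then each tuple with all $n_v\ge 1$ contributes exactly $\prod_{v\in\cS}(1-q^{-\deg v})\,q^{\fb\rho_\cS-\deg\fp+1-g_K}$, which is unbounded. So neither ``bounded by $1$ per $\underline n$'' nor ``the main term is of size $\fb^{s-1}$'' holds.
\end{itemize}

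The fix is to split at $\fb\rho_\cS\ge \deg\fp+\deg Q+2g_K-1$. Above this threshold, all counts are exact as described. Below it, $\ell(D-P)\le \deg Q+2g_K-1$, so each $\underline n$ contributes $O_{K,\cS}(1)$ and the main term is $O(\fb^{s-1})$. The same split settles $s=1$ with error $O(1)$. No threshold at $\fb\rho_\cS/2$ arises; the term ${\rm N}(\fp)^{-1/2}q^{\fb\rho_\cS/2}$ in $\tilde E_\cS$ is an artifact of the paper's use of Clifford's theorem.

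Two small points. First, $\cO_\cS\cap\mathscr{L}(-P)=\{0\}$; what you mean is $\fp\cap\mathscr{L}(D)=\mathscr{L}(D-P)$. Second, absorbing $\fb^{s-2}q^{\fb\rho_\cS}/{\rm N}(\fp)$ into $\tilde E_\cS$ needs $\deg\fp\ge 1$. For $\fp=\cO_\cS$ the factor $\deg(\fp)$ must be read as $1+\deg(\fp)$, and the paper's own proof needs this too.
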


Theorem \ref{prop:crucari} follows then from the following by noting that $\deg(Q) \leq  \fd$ when $\cS=\cS_\infty$.
\begin{cor}
With the same notation as in Theorem \ref{cor:funcfiel}, for $B \in q^{\rho_\cS\N}$,
$$
\#(\mathfrak{p}+a)_B
=
\frac{\kappa_{K,\cS}}{{\rm N}(\fp)}
\log_q(B)^{\#\cS-1}B
+
O_{K,\cS}\left(
E_\cS(B,\fp)
\right),
$$
where $\kappa_{K,\cS}=\frac{q^{\rho_\cS}}{q^{\rho_\cS}-1} \tilde \kappa_{K,\cS}$ and
$$
E_\cS(B,\fp)=
\begin{cases}
\log_q(B)^{\#\cS}
+
\frac{\deg(\fp)}
{{\rm N}(\fp)^{1/\deg(Q)}}
B\log_q(B)^{\#\cS-2} &\textrm{if }\#\cS>1,\\
\log_q(B)
+
\frac{1}{{\rm N}(\fp)^{1/2}}B^{1/2}&\textrm{if }\#\cS=1.
\end{cases}
$$
\end{cor}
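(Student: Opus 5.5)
Summing Theorem \ref{cor:funcfiel} over all exact heights up to $\log_q(B)$ gives the corollary. Write $B=q^{N\rho_\cS}$ with $N\in\N$, so that $\log_q(B)=N\rho_\cS$. Every $\alpha\in\cO_\cS$ has $h_K(\alpha)\in\rho_\cS\N$, by \eqref{eq:hkdeg} and because its poles lie in $\cS$. Hence
$$\#(\fp+a)_B=\sum_{\fb=0}^{N} N^{=}_{h_K}(\fp+a,\fb\rho_\cS),$$
and we insert the asymptotic of Theorem \ref{cor:funcfiel} term by term. The $O$-constants there depend only on $K,\cS$, so they are uniform in $\fb$, $\fp$ and $a$, and the errors add without loss.

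\emph{Main term.} We must evaluate $S(N)=\sum_{\fb=0}^N q^{\fb\rho}(\fb\rho)^{s-1}$, where $\rho=\rho_\cS$ and $s=\#\cS$. The plan is to compare with $(N\rho)^{s-1}\sum_{\fb\le N}q^{\fb\rho}$. Set $j=N-\fb$. Then
$$(N\rho)^{s-1}-((N-j)\rho)^{s-1}=O\bigl(j N^{s-2}\bigr),$$
and the weights are $q^{(N-j)\rho}$. It follows that
$$S(N)=(N\rho)^{s-1}\sum_{j\ge0}q^{(N-j)\rho}+O\Bigl(N^{s-2}q^{N\rho}\sum_j jq^{-j\rho}\Bigr)+O\bigl((N\rho)^{s-1}\bigr),$$
where the last term accounts for the tail $j>N$. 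The geometric series gives $\sum_{j\ge0}q^{-j\rho}=\frac{q^{\rho}}{q^{\rho}-1}$, which explains the factor in $\kappa_{K,\cS}$. So the main term is
$$\frac{\kappa_{K,\cS}}{{\rm N}(\fp)}\,B\log_q(B)^{s-1}+O\Bigl(\frac{B\log_q(B)^{s-2}}{{\rm N}(\fp)}\Bigr).$$
When $s=1$ the correction vanishes, because $(\fb\rho)^0=1$ and the sum is exactly geometric up to the tail.

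\emph{Error terms.} For $s>1$, summing $\fb^{s-1}$ over $\fb\le N$ gives $O(N^{s})=O(\log_q(B)^{s})$. Summing $q^{\fb\rho}\fb^{s-2}\deg(\fp){\rm N}(\fp)^{-1/\deg(Q)}$ is again geometric, which gives $O\bigl(\deg(\fp){\rm N}(\fp)^{-1/\deg(Q)}B\log_q(B)^{s-2}\bigr)$.

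The main-term correction $O(B\log_q(B)^{s-2}/{\rm N}(\fp))$ must also be absorbed. Since ${\rm N}(\fp)\ge 1$ and $\deg(Q)\ge1$, we have ${\rm N}(\fp)^{-1}\le {\rm N}(\fp)^{-1/\deg(Q)}$. It remains to bound ${\rm N}(\fp)^{-1}\le\deg(\fp){\rm N}(\fp)^{-1/\deg(Q)}$ when $\deg(\fp)\ge1$. The case $\fp=\cO_\cS$, where $\deg(\fp)=0$, needs a separate look. In that case the correction $O(B\log_q(B)^{s-2})$ is not covered by the stated $E_\cS$ literally. I would read $\deg(\fp)$ as $\max(1,\deg\fp)$, or use a convention making $\deg(\cO_\cS)\ge1$ in the error term, and check this against the theorem's normalisation. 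This bookkeeping is the only delicate point.

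For $s=1$, the error sum $\sum_{\fb\le N}\bigl(1+{\rm N}(\fp)^{-1/2}q^{\fb\rho/2}\bigr)$ gives $O\bigl(\log_q(B)+{\rm N}(\fp)^{-1/2}B^{1/2}\bigr)$, since the second part is geometric with ratio $q^{\rho/2}>1$.

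Collecting the main term and these error terms yields the stated formula with $E_\cS(B,\fp)$. Specialising to $\cS=\cS_\infty$, where $\deg(Q)\le\fd$, then gives Theorem \ref{prop:crucari}(1) in the function field case.
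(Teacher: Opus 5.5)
Your proposal is correct and follows essentially the paper's route: sum the exact-height counts of Theorem~\ref{cor:funcfiel} over $\fb\le \log_q(B)/\rho_\cS$, extract the factor $q^{\rho_\cS}/(q^{\rho_\cS}-1)$ with a relative error $O(\log_q(B)^{-1})$, and sum the error terms geometrically. The edge case you flag is genuine: the paper absorbs the resulting $O\bigl(B\log_q(B)^{\#\cS-2}/{\rm N}(\fp)\bigr)$ into $E_\cS(B,\fp)$ without comment, which fails literally when $\deg(\fp)=0$, and Theorem~\ref{cor:funcfiel} has the same defect since its proof only gives a factor $1+\deg(\fp)$ there, so reading $\deg(\fp)$ as $\max\{1,\deg(\fp)\}$ is the right fix.
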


\begin{proof}
Note that
$$
\#(\mathfrak{p}+a)_B
=
\sum_{\mathfrak{b}=0}^{\log_q (B)/\rho_\cS}
N^{=}_{h_K}(\mathfrak{p}+a,\mathfrak{b}\rho_\cS).
$$
The result follows by applying summation by parts, together with the estimate for $N^{=}_{h_K}(\mathfrak{p}+a,\mathfrak{b}\rho_\cS)$ given in Theorem \ref{cor:funcfiel}. Indeed, for $\#\cS \geq 2$, we have
\begin{align*}
\#(\fp+a)_B&=\frac{\tilde \kappa_{K,\cS}}{{\rm N}(\fp)} \rho_\cS^{\#\cS-1}\left(\frac{\log_q(B)}{\rho_\cS}\right)^{\#\cS-1}B\left(\frac{q^{\rho_\cS}}{q^{\rho_\cS}-1}+O_\cS(\log_q(B)^{-1}) \right)\\
&+ O_\cS \left( \log_q(B)^{\#\cS}+\frac{\deg(\fp)}{{\rm N}(\fp)^{1/\deg(Q)}}B\log_q(B)^{\#\cS-2}\right)\\
&=\frac{\kappa_{K,\cS}}{{\rm N}(\fp)}
\log_q(B)^{\#\cS-1}B
+
O_{K,\cS}\left(
E_\cS(B,\fp)
\right).
\end{align*}
For $\#\cS=1$, we have
\begin{align*}
\#(\fp+a)_B&=\frac{\tilde \kappa_{K,\cS}}{{\rm N}(\fp)}\frac{q^{\rho_\cS}B-1}{q^{\rho_\cS}-1}+O_\cS\left(\log_q(B)^{\#\cS}+\frac{1}{{\rm N}(\fp)^{1/2}}B^{1/2}\right)\\
&=\frac{\kappa_{K,\cS}}{{\rm N}(\fp)}
\log_q(B)^{\#\cS-1}B
+
O_{K,\cS}\left(
E_\cS(B,\fp)
\right).
\end{align*}
\end{proof}

The remaining of this section is then  devoted to the proof of Theorem \ref{cor:funcfiel}.

\subsubsection{Preliminaries}

\subsubsection*{$**$ Riemann--Roch}

For each divisor $A \in {\rm Div}(K)$, the \emph{Riemann--Roch space associated with $A$} is the $\F_q$-vector space
$$
\mathscr{L}(A)=\{ \alpha \in K \mid (\alpha)+A \geq 0 \}=\{\alpha \in K \mid \ord_v(\alpha) \geq -A_v\,\,\textrm{for all } v \in \cM_K\},
$$ and we set $\ell(A):=\dim_{\F_q}(\mathscr{L}(A))$. Recall the following well-known results:
\vskip 1mm
\noindent
$\bullet$ If $\deg(A)<0$, then $\ell(A)=0$.
\vskip 1mm
\noindent
$\bullet$ ({\bf Clifford's theorem}) If $0 \leq \deg(A) < 2 g_K-1$, then
$$
\ell(A) \leq 1 + \frac{1}{2} \deg(A).
$$
\vskip 1mm
\noindent
$\bullet$ ({\bf Riemann--Roch theorem}) If $\deg(A) \geq 2g_K-1$, then
$$
\ell(A)=\deg(A)-g_K+1. 
$$

\subsubsection*{$**$ $\mathcal{S}$-integers}
A nonzero ideal $\mathfrak{p}$ of $\mathcal{O}_{\cS}$ corresponds to an effective divisor $\sP \in {\rm Div}(K)$ (that is, $\sP_v \geq 0$ for all $v \in \cM_K$) with ${\rm supp}(\sP) \cap \cS=\emptyset$ such that
\begin{equation}\label{eq:defP}
\mathfrak{p}=\{\alpha \in K \mid \ord_v(\alpha) \geq \sP_v\,\,\textrm{for all }v \notin \cS \}.
\end{equation}
More precisely,
$$
\sP_v=
\begin{cases}
\min\{ \ord_v(\alpha) \mid \alpha \in \mathfrak{p}\} &\textrm{if } v \notin \cS,\\
0 &\textrm{otherwise}.
\end{cases}
$$
Note that $\deg(\sP)=\deg(\mathfrak{p})=\log_q({\rm N}(\fp))$. In the following, we identify $\sP$ with $\mathfrak{p}$ by abuse of notation.

\subsubsection{Points of bounded heights in arithmetic progression}

Fix a nonempty finite set $\cS \subset \cM_K$. Let $\mathfrak{p}$ be an ideal of $\cO_{\cS}$ and let $a \in \cO_{\cS}$. Note that $h_K(\cO_{\cS}) \subset \rho_{\cS} \mathcal{U}$, where
$$
\mathcal{U}=\sum_{v \in \cS} \N \cdot \frac{\deg(v)}{\rho_\cS}.
$$
By Schur's theorem, all sufficiently large integers belongs to $\mathcal{U}$. For $\fb \in \N$, define
$$
\mathbb{B}_\fb:=\left\lbrace \underline{b} \in \N_0^{\#\cS} \bigg\vert \sum_{v \in \cS} b_v \frac{\deg(v)}{\rho_\cS}=\fb  \right\rbrace.
$$
Moreover, for $\underline{b} \in \N_0^{\#\cS}$, define
$$
(\mathfrak{p}+a)_{=\underline{b}}=\left\lbrace \theta \in \mathfrak{p}+a \bigg\vert (\theta)_{\infty}=\sum_{v \in \cS} b_v \cdot v\right\rbrace.
$$
\begin{rmk}
For
$\gamma \in \mathfrak{p}+a$ with $h_K(\gamma)=\fb \rho_\cS$, there exists a unique 
$\underline{b}=(b_v)_{v \in \cS} \in \mathbb{B}_\fb$ such that $\gamma \in (\mathfrak{p}+a)_{=\underline{b}}$, namely
$$
b_v=\begin{cases}
-\ord_v(\gamma)& \textrm{if } v \in \cS \textrm{ and } \ord_v(\gamma)<0,\\
0 & \textrm{if } v \in \cS \textrm{ and } \ord_v(\gamma)\geq 0.
\end{cases}
$$
\end{rmk}
By the above remark, we obtain the disjoint union
\begin{equation}\label{eq:22}
(\mathfrak{p}+a)_{=\fb}:=\bigsqcup_{\underline{b} \in \mathbb{B}_\fb}(\mathfrak{p}+a)_{=\underline{b}},
\end{equation}
and hence
\begin{equation}\label{eq:33}
N^{=}_{h_K}(\mathfrak{p}+a,\mathfrak{b}\rho_\cS)=\#(\mathfrak{p}+a)_{=\fb}=\sum_{\underline{b} \in \mathbb{B}_\fb} \#(\mathfrak{p}+a)_{=\underline{b}}.
\end{equation}

\subsubsection*{$**$ Asymptotics of $\mathbb{B}_\fb$} 

Recall that 
$$
\fb \in \mathcal{U}=\sum_{v \in \cS} \N_0 \cdot \frac{\deg(v)}{\rho_\cS};
$$ For $\underline{b} \in \mathbb{B}_\fb$, set 
$$
\cS_{\underline{b}}:=\{ v \in \cS \mid b_v>0\}.
$$
We split $\mathbb{B}_{\fb}$ into two sets:

\begin{align}\label{eq:defb00}
\mathbb{B}_{\fb}^{(0)}&=\{ \underline{b} \in \mathbb{B}_\fb  \mid \cS_{\underline{b}}=\cS\textrm{ and } b_v> - \ord_v(a)\,\,\textrm{for all } v \in \cS\}\\
&=\{ \underline{b} \in \mathbb{B}_\fb \mid b_v > \max \{0,-\ord_v(a) \}\,\, \textrm{for all } v \in \cS \},\nonumber
\end{align}
and $\mathbb{B}_\fb^{(1)}=\mathbb{B}_\fb \setminus \mathbb{B}_\fb^{(0)}$. 
Write $\mathbb{B}_\fb^{(1)}=\mathbb{B}_\fb^{(11)} \cup \mathbb{B}_\fb^{(12)}$, where
$$
\mathbb{B}_\fb^{(11)}=\{\underline{b} \in \mathbb{B}_\fb \mid b_v \leq \max\{0,-\ord_v(a)\}\,\,\textrm{for all }v \in \mathcal{S}\},
$$ and
$$
\mathbb{B}_\fb^{(12)}=\{\underline{b} \in \mathbb{B}_\fb \mid b_v \leq \max\{0,-\ord_v(a)\}\,\,\textrm{and}\,\, b_w > \max\{0,-\ord_w(a)\}\,\,\textrm{for some }v,w \in \mathcal{S}\}.
$$
This splitting allows us to isolate the main term and also to exhibit the exponent of ``${\rm N}(\fp)$'' appearing in the error term of Theorem \ref{prop:crucari}, which is crucial in the proof of the Ekedahl--Poonen formula. From \eqref{eq:33}, it follows that

\begin{equation}\label{eq:44}
N^{=}_{h_K}(\mathfrak{p}+a,\mathfrak{b}\rho_\cS)=\sum_{\underline{b} \in \mathbb{B}_\fb^{(0)}}\#(\mathfrak{p}+a)_{=\underline{b}}+ \sum_{\underline{b} \in \mathbb{B}_\fb^{(1)}} \#(\mathfrak{p}+a)_{=\underline{b}}.
\end{equation}
Our goal is to analyze $\# \mathbb{B}_\fb^{(0)}$ and $\# \mathbb{B}_\fb^{(1)}$. Let $s=\#\cS$ and write 
$$\cS=\{ v_1,\dots,v_s\}.$$ Define $\underline{\xi}=\left( \frac{\deg(v_1)}{\rho_\cS},\dots,\frac{\deg(v_s)}{\rho_\cS}\right) \in \N^s$, which is {primitive}. Let $\langle \cdot, \cdot \rangle$ denote the standard inner product on $\mathbb{R}^s$. Write
$$
\fb=\sum_{i=1}^s b_i \cdot \xi_i =\langle \underline{b},\underline{\xi}\rangle.
$$ Consider the hyperplane
$$
H_\fb= \{ \underline{x} \in \mathbb{R}^s \mid \langle \underline{x},\underline{\xi}\rangle=\fb\},
$$ and the simplex in $H_\fb$ defined by $\sS_\fb=H_\fb \cap \mathbb{R}^s_{\geq 0}$. Since $\underline{\xi} \in \Z^s$ is primitive, Bézout's identity implies that $\Z^s \cap H_\fb$ is nonempty; fix $\underline{z}$ in this intersection. Define the translation 
$$
T(\underline{x})=\underline{x}-\underline{z}.
$$ Then $T(\Z^s)=\Z^s$, and therefore
$$
\#(\sS_\fb \cap \Z^s)=\#(T(\sS_\fb \cap \Z^s))=\#(T(\sS_\fb) \cap \Z^s).
$$
Let $G_0:=T(H_\fb)=(\mathbb{R}\cdot \underline{\xi})^{\perp}$, and define 
$$
\Lambda_{\underline{\xi}}=G_0 \cap \Z^s,$$ which is a lattice of full rank in the $(s-1)$-dimensional subspace $G_0$. Since $\underline{\xi}$ is primitive, we have
\begin{equation}\label{eq:formdet}
\det(\Lambda_{\underline{\xi}})=\Vert \underline{\xi} \Vert,
\end{equation} where $\Vert \cdot \Vert$ denotes the Euclidean norm.

\begin{lem}\label{lem:volsm}
The volume of $\sS_\fb$ is
$$
{\rm Vol}_{s-1}(\sS_\fb)=\frac{\Vert \underline{\xi} \Vert \fb^{s-1}}{(s-1)! \xi_1 \cdots \xi_s}.
$$
\end{lem}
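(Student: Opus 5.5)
The plan is to compute the $(s-1)$-dimensional volume of the simplex $\sS_\fb=H_\fb\cap\mathbb{R}^s_{\geq 0}$ by comparing it with the $s$-dimensional volume of the solid simplex (cone over $\sS_\fb$ with apex the origin) and then using the distance from the origin to $H_\fb$. First, observe that $\sS_\fb$ is the simplex with vertices $\frac{\fb}{\xi_i}e_i$, $i=1,\dots,s$, where $e_i$ is the $i$-th standard basis vector, since each coordinate axis meets $H_\fb$ at the point $x_i=\fb/\xi_i$, and $\sS_\fb$ is the convex hull of these points.

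Next, consider the full-dimensional simplex $\Delta_\fb=\{\underline{x}\in\mathbb{R}^s_{\geq0}\mid \langle\underline{x},\underline{\xi}\rangle\leq\fb\}$, which is the convex hull of $0$ and these vertices. Its volume is the determinant formula for a corner simplex:
$$
{\rm Vol}_s(\Delta_\fb)=\frac{1}{s!}\prod_{i=1}^s\frac{\fb}{\xi_i}=\frac{\fb^s}{s!\,\xi_1\cdots\xi_s}.
$$
On the other hand, $\Delta_\fb$ is a cone (pyramid) with base $\sS_\fb$ and apex $0$, whose height is the Euclidean distance from $0$ to $H_\fb$, namely $\fb/\Vert\underline{\xi}\Vert$. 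The pyramid volume formula in dimension $s$ gives
$$
{\rm Vol}_s(\Delta_\fb)=\frac{1}{s}\cdot\frac{\fb}{\Vert\underline{\xi}\Vert}\cdot{\rm Vol}_{s-1}(\sS_\fb).
$$

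Equating the two expressions and solving yields ${\rm Vol}_{s-1}(\sS_\fb)=\frac{\Vert\underline{\xi}\Vert\,\fb^{s-1}}{(s-1)!\,\xi_1\cdots\xi_s}$, as claimed. The only step needing any justification is the pyramid formula $\frac1s\cdot{\rm height}\cdot{\rm base}$. I would justify it by slicing: the section of $\Delta_\fb$ by the hyperplane parallel to $H_\fb$ at distance $t$ from $0$ is a homothetic copy of $\sS_\fb$ with ratio $t\Vert\underline{\xi}\Vert/\fb$. Integrating $\int_0^{\fb/\Vert\underline{\xi}\Vert}(t\Vert\underline{\xi}\Vert/\fb)^{s-1}{\rm Vol}_{s-1}(\sS_\fb)\,dt$ by Fubini in coordinates adapted to $\underline{\xi}$ gives the result. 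The case $s=1$ is trivial: $\sS_\fb$ is a point with counting volume $1=\Vert\underline{\xi}\Vert/\xi_1$, since $\xi_1=1$ by primitivity.
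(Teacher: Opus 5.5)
Your proof is correct, and it takes a genuinely different route from the paper's.

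\textbf{The paper's route.} It parametrizes $\sS_\fb$ as the graph of $x_s=f(x_1,\dots,x_{s-1})=(\fb-\xi_1x_1-\cdots-\xi_{s-1}x_{s-1})/\xi_s$ over the $(s-1)$-dimensional corner simplex $\{x_i\geq 0,\ \xi_1x_1+\cdots+\xi_{s-1}x_{s-1}\leq \fb\}$. The area element $\sqrt{1+|\nabla f|^2}=\Vert\underline{\xi}\Vert/\xi_s$ is constant, so the volume is this factor times the volume of the projected simplex. That volume, $\fb^{s-1}/((s-1)!\,\xi_1\cdots\xi_{s-1})$, is computed by induction on $s$ through an iterated integral.

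\textbf{Your route.} You go up one dimension: you take the volume of the solid corner simplex $\Delta_\fb$ and recover the facet's volume from the pyramid formula. The factor $\Vert\underline{\xi}\Vert$ then appears as the reciprocal of the height $\fb/\Vert\underline{\xi}\Vert$, rather than as a surface-area Jacobian.

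\textbf{Comparison.} Both arguments rest on the same fact, that a corner simplex with edge lengths $a_1,\dots,a_n$ has volume $a_1\cdots a_n/n!$. You quote it in dimension $s$; the paper proves it by induction in dimension $s-1$.
\begin{itemize}
\item Your version is coordinate-free and symmetric in the $\xi_i$, whereas the paper singles out $x_s$. Your homothety-and-Fubini justification of the pyramid formula is sound.
\item The paper's computation is a single direct surface integral and never divides by the height. Your argument needs $\fb>0$ to solve for ${\rm Vol}_{s-1}(\sS_\fb)$. The case $\fb=0$ is trivial, since $\sS_0=\{0\}$ has zero $(s-1)$-volume for $s\geq 2$, and the paper takes $\fb\in\N$ anyway.
\end{itemize}
Your $s=1$ check is fine. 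In fact $\Vert\underline{\xi}\Vert/\xi_1=1$ holds without appealing to primitivity.
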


\begin{proof}
Define 
$$
f(x_1,\dots,x_{s-1})=\frac{\fb}{\xi_s}-\frac{\xi_1}{\xi_s}x_1-\dots-\frac{\xi_{s-1}}{\xi_s}x_{s-1}.
$$
The volume of $\sS_\fb$ is obtained by integrating the function
$$
\sqrt{1+\left(\frac{\partial f}{\partial x_1}\right)^2+\dots+\left(\frac{\partial f}{\partial x_{s-1}}\right)^2}=\frac{\Vert \underline{\xi} \Vert}{\xi_s}
$$
over a certain coordinate domain. More precisely,
$$
{\rm Vol}_{s-1}(\sS_\fb)=\frac{\Vert \underline{\xi} \Vert}{\xi_s} 
\int_{0}^{\frac{\fb}{\xi_1}} \int_{0}^{\frac{\fb}{\xi_2}-\frac{\xi_1}{\xi_2}x_1}\cdots \int_{0}^{\frac{\fb}{\xi_{s-1}}-\frac{\xi_1}{\xi_{s-1}}x_1 -\cdots-\frac{\xi_{s-2}}{\xi_{s-1}}x_{s-2}}d{x_{s-2}}\cdots dx_1.
$$
Denote this last integral by $I_\fb(\xi_1,\dots,\xi_s)$ so that
$$
{\rm Vol}_{s-1}(\sS_\fb)=\frac{\Vert \underline{\xi} \Vert}{\xi_s} I_\fb(\xi_1,\dots,\xi_s).
$$
We need to prove that
$$
I_\fb(\xi_1,\dots,\xi_s)=\frac{\fb^{s-1}}{(s-1)! \xi_1 \cdots \xi_{s-1}}.
$$
This follows  by induction on $s$, using the recursion
$$
I_\fb(\xi_1,\dots,\xi_s)=\int_0^{\frac{\fb}{\xi_1}}I_{\fb-\xi_1 x_1}(\xi_2,\dots,\xi_s)\, dx_1,
$$
with base case ($s=2$) $I_\fb(\xi_1,\xi_2)=\int_0^{\frac{\fb}{\xi_1}}dx_1=\frac{\fb}{\xi_1}$.
\end{proof}
The $(s-2)$-dimensional boundary of $\sS_\fb$ consists of $s$ faces $F_1,\dots,F_s$, each lying in a translate of a parallelepiped
$$
[0,1[\cdot u_1 + \dots+ [0,1[ \cdot u_{s-2}
$$
with $\Vert u_i\Vert \leq \fb$. Each such a parallelepiped can be covered by the image of a Lipschitz map
$$
\phi:[0,1]^{s-2} \rightarrow G_0
$$
with Lipschitz constant 
\begin{equation}\label{eq:lipcons}
\sqrt{\sum_{i=0}^{s-2} \fb^2} \leq \sqrt{s-2} \fb.
\end{equation} This property obviously remains true for the translated faces $F_1,\dots,F_s$.

\begin{lem}\label{lem:dedea}
We have
$$
\# \mathbb{B}_\fb=\frac{\fb^{s-1} \rho_\cS^s}{(s-1)! \prod_{v \in \cS}\deg(v)}+ O_{\cS}\left( 1 + \fb^{s-2} \right).
$$
\end{lem}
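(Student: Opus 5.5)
The plan is to count lattice points of $\Lambda_{\underline{\xi}}$ in the translated simplex $T(\sS_\fb)\subset G_0$ by a Davenport/Widmer-type lattice point counting result for sets with Lipschitz-parametrizable boundary. The reduction is already in place: $\#\mathbb{B}_\fb=\#(\sS_\fb\cap\Z^s)=\#(T(\sS_\fb)\cap\Lambda_{\underline{\xi}})$. This holds because $T(\Z^s)=\Z^s$ and $T(\sS_\fb)\subset G_0$, so $T(\sS_\fb)\cap\Z^s=T(\sS_\fb)\cap\Lambda_{\underline{\xi}}$. Also $T(\sS_\fb)$ is a compact convex subset of the $(s-1)$-dimensional space $G_0$, with the same volume as $\sS_\fb$.

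The key step is to apply the standard counting principle (e.g.\ Widmer's \emph{Counting primitive points of bounded height}, Theorem 5.4, or Davenport's lemma in its Lipschitz form). It says that if $\Lambda$ is a full-rank lattice in an $N$-dimensional Euclidean space and the boundary of a bounded set $X$ is covered by $M$ images of Lipschitz maps $[0,1]^{N-1}\to$ the space with constant $L$, then
\begin{equation*}
\left|\#(X\cap\Lambda)-\frac{{\rm Vol}(X)}{\det\Lambda}\right|\leq c_N M\left(\frac{L}{\lambda_1}+1\right)^{N-1}.
\end{equation*}
Here $N=s-1$ and $M=s$. By \eqref{eq:lipcons} we may take $L=\sqrt{s-2}\,\fb$. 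Moreover $\lambda_1(\Lambda_{\underline{\xi}})\geq 1$, since it is a sublattice of $\Z^s$. The error is therefore $O_s((\fb+1)^{s-2})=O_\cS(1+\fb^{s-2})$. For the boundary covering we use the $s$ faces $F_i$ described just before the statement, each translated by $-\underline{z}$.

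For the main term, combine Lemma~\ref{lem:volsm} with \eqref{eq:formdet}:
\begin{equation*}
\frac{{\rm Vol}_{s-1}(\sS_\fb)}{\det\Lambda_{\underline{\xi}}}=\frac{\fb^{s-1}}{(s-1)!\,\xi_1\cdots\xi_s}=\frac{\fb^{s-1}\rho_\cS^s}{(s-1)!\prod_{v\in\cS}\deg(v)},
\end{equation*}
using $\xi_i=\deg(v_i)/\rho_\cS$. This gives the main term in the statement.

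Two degenerate cases need separate care. When $s=1$, we have $\mathbb{B}_\fb=\{\fb/\xi_1\}$ if $\xi_1\mid\fb$ and is empty otherwise; but $\xi_1=1$ by primitivity, so $\#\mathbb{B}_\fb=1$, which equals the main term $\rho_\cS/\deg(v)=1$. When $s=2$, the boundary is just two points and the counting estimate gives error $O(1)$, which is consistent with the claimed bound.

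The main obstacle is making sure the quoted lattice-point theorem applies inside the subspace $G_0$ rather than in $\Rr^{s-1}$. To handle this, identify $G_0$ isometrically with $\Rr^{s-1}$; this preserves volumes, determinants, $\lambda_1$ and Lipschitz constants. We also need the boundary of $T(\sS_\fb)$ relative to $G_0$ to be exactly the union of the translated faces, which holds because $\sS_\fb$ is a simplex in $H_\fb$.
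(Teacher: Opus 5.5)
Your proposal is correct and follows essentially the same route as the paper. The paper applies the Lipschitz-boundary lattice-point lemma of Masser--Vaaler \cite[Lemma 2]{MV07} to $\Lambda_{\underline{\xi}}\cap T(\sS_\fb)$, using $\lambda_1\geq 1$, $\det\Lambda_{\underline{\xi}}=\Vert\underline{\xi}\Vert$, the Lipschitz constant from \eqref{eq:lipcons}, and the volume from Lemma \ref{lem:volsm}; your separate treatment of $s=1,2$ and the isometric identification $G_0\cong\Rr^{s-1}$ only make explicit what the paper leaves implicit.
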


\begin{proof}
We apply \cite[Lemma 2]{MV07}. To this end, we note that $\Lambda_{\underline{\xi}} \subset \Z^s$, hence every non-zero vector has length at least one. Using \eqref{eq:formdet}, \eqref{eq:lipcons} and Lemma \ref{lem:volsm}, we obtain 
\begin{align*}
\#\mathbb{B}_\fb&=\#(\Lambda_{\underline{\xi}} \cap T(\sS_\fb))=\frac{{\rm Vol}_{s-1}(T(\sS_\fb))}{\det(\Lambda_{\underline{\xi}})}+O_{\mathcal{S}}(1+\fb^{s-2})\\
&=\frac{\fb^{s-1}}{(s-1)! \xi_1 \cdots \xi_s}+O_{\mathcal{S}}(1+\fb^{s-2}).
\end{align*}
\end{proof}

\begin{cor}\label{cor:dedea}
We have
$$
\# \mathbb{B}_\fb^{(1)}=O_{\mathcal{S}}((1+h_K(a))(1+\fb^{s-2})).
$$
\end{cor}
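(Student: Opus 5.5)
Since $\mathbb{B}_\fb^{(1)}=\mathbb{B}_\fb^{(11)}\cup\mathbb{B}_\fb^{(12)}$, I will bound each piece separately. The only input needed is a count of lattice points on a lower-dimensional simplex, of the same kind as in Lemma \ref{lem:dedea}. Set $c_v:=\max\{0,-\ord_v(a)\}$ for $v\in\cS$. Since $a\in\cO_\cS$, its poles lie in $\cS$, so by \eqref{eq:hkdeg}
$$\sum_{v\in\cS}c_v\deg(v)=h_K(a).$$
In particular $c_v\leq h_K(a)$ for each $v$.

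\textbf{The set $\mathbb{B}_\fb^{(11)}$.} Its elements satisfy $0\leq b_v\leq c_v$ for all $v$, so
$$\#\mathbb{B}_\fb^{(11)}\leq \prod_{v\in\cS}(1+c_v).$$
This bound is too weak in the exponent of $h_K(a)$ if taken naively. A better route uses the defining equation $\sum_v b_v\xi_v=\fb$: the set is nonempty only if $\fb\leq\sum_v c_v\xi_v\leq h_K(a)$, and then it is contained in $\mathbb{B}_\fb$. Lemma \ref{lem:dedea} then gives
$$\#\mathbb{B}_\fb^{(11)}\leq O_\cS(1+\fb^{s-1})=O_\cS\big(1+h_K(a)\fb^{s-2}\big),$$
because $\fb\leq h_K(a)$ lets one factor of $\fb$ be absorbed into $h_K(a)$. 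When $s=1$ the whole set has at most one element. Either way $\#\mathbb{B}_\fb^{(11)}$ fits the claimed bound $O_\cS((1+h_K(a))(1+\fb^{s-2}))$.

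\textbf{The set $\mathbb{B}_\fb^{(12)}$.} This is the main case and where the factor $(1+h_K(a))$ comes from. It only occurs when $s\geq 2$. Any $\underline{b}$ in it has some coordinate $v$ with $b_v\leq c_v$, so
$$\mathbb{B}_\fb^{(12)}\subset\bigcup_{v\in\cS}\ \bigcup_{j=0}^{c_v}\{\underline{b}\in\mathbb{B}_\fb\mid b_v=j\}.$$
Fixing $b_v=j$ leaves the nonnegative integer solutions of $\sum_{w\neq v}b_w\xi_w=\fb-j\xi_v$ in $s-1$ variables. Their number is at most $O_\cS(1+\fb^{s-2})$, uniformly in $j$. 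To see this, either apply Lemma \ref{lem:dedea} (or \cite[Lemma 2]{MV07} directly) to the face simplex of dimension $s-2$, or use the crude bound: the first $s-2$ of these coordinates are each at most $\fb$, and they determine the last one. Summing over the at most $1+c_v\leq 1+h_K(a)$ values of $j$ and over the $s$ choices of $v$ gives
$$\#\mathbb{B}_\fb^{(12)}=O_\cS\big((1+h_K(a))(1+\fb^{s-2})\big).$$
Note that a primitivity issue can arise for the vector $(\xi_w)_{w\neq v}$, since it need not be primitive. This is why I prefer the crude box count, which avoids the issue entirely.

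The only point needing care is making the constant uniform in $a$ and $j$; the box count handles this. Adding the bounds for $\mathbb{B}_\fb^{(11)}$ and $\mathbb{B}_\fb^{(12)}$ proves the corollary.
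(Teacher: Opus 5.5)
Your proof is correct and is essentially the paper's argument: cover $\mathbb{B}_\fb^{(1)}$ by the slices $\{b_v=j\}$ with $0\le j\le c_v\le h_K(a)$, and bound each slice by $O_\cS(1+\fb^{s-2})$. The only difference is that you bound a slice by the elementary box count $(\fb+1)^{s-2}$, whereas the paper applies Lemma \ref{lem:dedea} to $\cS\setminus\{v_i\}$; your route avoids renormalizing by $\gcd\{\deg(w)\mid w\neq v_i\}$ and the degenerate case $s=2$. Your separate treatment of $\mathbb{B}_\fb^{(11)}$ is valid but redundant, since every element of $\mathbb{B}_\fb^{(11)}$ has a coordinate with $b_v\le c_v$ and is already covered by the slicing argument.
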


\begin{proof}
Since $h_K(a) \geq \max \{0,-\ord_{v}(a)\}$ for all $v \in \cS$, we have
\begin{equation}\label{eq:boundB1}
\#\mathbb{B}_\fb^{(1)} \leq \sum_{i=1}^s \# \{ \underline{b} \in \mathbb{B}_\fb \mid b_i \leq h_K(a)\}.
\end{equation}
Fix $i=1,\dots,s$. For $m \geq 0$, let $\mathbb{B}^{(i)}_{\fb,m}=\{ \underline{b} \in \mathbb{B}_\fb \mid b_i=m \}$. Then
$$
\# \mathbb{B}^{(i)}_{\fb,m}=\#\left \lbrace \underline{\tilde b} \in \N_0^{s-1} \bigg \vert \sum_{\ell=1, \ell \neq i}^{s} \tilde{b}_\ell \cdot \deg(v_\ell)=\fb \rho_\cS- m \deg(v_i) \right \rbrace.
$$
By Lemma \ref{lem:dedea} applied to $\cS \setminus \{v_i\}$, we obtain
$$
\mathbb{B}^{(i)}_{\fb,m} \leq \fb^{s-2}+ O_{\mathcal{S}}(1+\fb^{s-3}).
$$
Summing over $m \leq h_K(a)$ yields
$$
\#\{\underline{b} \in \mathbb{B}_\fb \mid b_i \leq h_K(a)\}=\sum_{m=0}^{h_K(a)} \# \mathbb{B}^{(i)}_{\fb,m} \leq \sum_{m=0}^{h_K(a)} \left(\fb^{s-2}+O_{\cS}(1+\fb^{s-3})\right)= O_{\cS} ((h_K(a)+1)(1+\fb^{s-2})).
$$
Summing over $i$, from \eqref{eq:boundB1}, we get 
$$
\# \mathbb{B}_\fb^{(1)} \leq \sum_{i=1}^s O_{\cS} ((h_K(a)+1)(1+\fb^{s-2}))=O_{\cS}((1+h_K(a))(1+\fb^{s-2})).
$$
\end{proof}

\begin{cor}\label{cor:444a}
We have
$$
\#\mathbb{B}_\fb^{(0)}=\frac{\fb^{s-1}\rho_\cS^{s}}{(s-1)! \prod_{v \in \cS} \deg(v)}+ O_{\cS} ((1+h_K(a))(1+\fb^{s-2})).
$$
\end{cor}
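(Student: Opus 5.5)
This follows directly from the decomposition $\mathbb{B}_\fb=\mathbb{B}_\fb^{(0)}\sqcup\mathbb{B}_\fb^{(1)}$, so that $\#\mathbb{B}_\fb^{(0)}=\#\mathbb{B}_\fb-\#\mathbb{B}_\fb^{(1)}$. The plan is simply to subtract the bound of Corollary \ref{cor:dedea} from the asymptotic of Lemma \ref{lem:dedea}.

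First, by Lemma \ref{lem:dedea},
$$
\#\mathbb{B}_\fb=\frac{\fb^{s-1}\rho_\cS^s}{(s-1)!\prod_{v\in\cS}\deg(v)}+O_\cS(1+\fb^{s-2}).
$$
Here one should double-check that the main term of Lemma \ref{lem:dedea} is indeed this expression in terms of the degrees. This holds because $\xi_1\cdots\xi_s=\prod_{v}\deg(v)/\rho_\cS^s$, and Lemma \ref{lem:volsm} combined with \eqref{eq:formdet} cancels the factor $\Vert\underline\xi\Vert$.

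Second, Corollary \ref{cor:dedea} gives $\#\mathbb{B}_\fb^{(1)}=O_\cS((1+h_K(a))(1+\fb^{s-2}))$. Subtracting, and absorbing $O_\cS(1+\fb^{s-2})$ into $O_\cS((1+h_K(a))(1+\fb^{s-2}))$ (valid since $h_K(a)\geq 0$), yields the claimed formula.

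No step presents a real obstacle. The only point requiring care is that the implied constants depend only on $\cS$ and not on $a$ or $\fb$. This uniformity is already built into the two cited results, since the dependence on $a$ appears explicitly through the factor $1+h_K(a)$.
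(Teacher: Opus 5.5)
Your proposal is correct and is essentially the paper's own argument: the paper likewise writes $\mathbb{B}_\fb^{(1)}=\mathbb{B}_\fb\setminus\mathbb{B}_\fb^{(0)}$ and subtracts the bound of Corollary \ref{cor:dedea} from the asymptotic of Lemma \ref{lem:dedea}. Your check that $\xi_1\cdots\xi_s=\prod_{v}\deg(v)/\rho_\cS^s$ gives the stated main term, and your absorption of the error term using $h_K(a)\geq 0$, are both correct.
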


\begin{proof}
Since $\mathbb{B}_\fb^{(1)}=\mathbb{B}_\fb \setminus \mathbb{B}_\fb^{(0)}$, this follows immediately from Lemma \ref{lem:dedea} and Corollary \ref{cor:dedea}.
\end{proof}

\subsubsection*{$**$ Asymptotics of  $(\mathfrak{p}+a)_{=\underline{b}}$} Let $\sP \in \cM_K$ be as in \eqref{eq:defP}. Let $\underline{b} \in \mathbb{B}_\fb$ and recall that
$$
\cS_{\underline{b}}=\{ v \in \cS \mid b_v >0\}.
$$

\begin{lem}\label{lem:Bb} Let $\underline{b} \in \mathbb{B}_\fb^{(12)}$. Then $$
\#(\fp+a)_{=\underline{b}} \leq 1+q^{\fb \rho_\cS-\deg(\fp)+h_K(a)-R(a)+1},
$$ where
\begin{equation}\label{eq:defRa}
R(a):=\min_{w \in \mathcal{S}} (\max\{0,-\ord_w(a)\}\deg(w)).
\end{equation}
\end{lem}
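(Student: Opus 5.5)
The plan is to bound $\#(\fp+a)_{=\underline{b}}$ by the size of a Riemann--Roch space coming from a difference trick. If $(\fp+a)_{=\underline{b}}$ is empty there is nothing to prove. Otherwise fix $\theta_0$ in it. Every other $\theta \in (\fp+a)_{=\underline{b}}$ gives a difference $\theta-\theta_0 \in \fp$. Hence $\#(\fp+a)_{=\underline{b}} \leq \#\mathscr{L}(D)$ for a divisor $D$ satisfying two conditions:
\begin{itemize}
\item[(i)] $D$ imposes vanishing along $\sP$, i.e. $D_v = -\sP_v$ for $v \notin \cS$;
\item[(ii)] $D$ allows poles at $v\in\cS$ only up to the order permitted by $\underline b$, with some saving coming from $a$.
\end{itemize}
Then $\#(\fp+a)_{=\underline{b}} \leq q^{\ell(D)}$, and we bound $\ell(D)$ by $\deg(D)+1$. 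We use $\ell(D)\le \max(0,\deg D+1)$, which holds for any divisor on a curve; the crude bound $\ell(A)\le \deg(A)+1$ for $\deg A\ge 0$ follows from Riemann--Roch/Clifford. This is where the ``$+1$'' in the exponent comes from. The extra ``$1+$'' in the statement handles the case where the set has a single element, or where we argue via a single point.

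The key step is choosing $D$ using the hypothesis $\underline b\in\mathbb{B}_\fb^{(12)}$. There is a place $v\in\cS$ with $b_v \le \max\{0,-\ord_v(a)\}$. Write $\theta = a + \pi$ with $\pi\in\fp$, so $\pi = \theta - a$. At each $w\in\cS$ we have $\ord_w(\pi)\ge \min(\ord_w\theta,\ord_w a) \ge -\max(b_w, \max\{0,-\ord_w(a)\})$. Thus $\pi \in \mathscr{L}(D')$ with
$$
D'_w=\max\bigl(b_w,\max\{0,-\ord_w(a)\}\bigr) \text{ for } w\in\cS, \qquad D'_w=-\sP_w \text{ for } w\notin\cS.
$$
This gives an injection $\theta\mapsto \theta-a$ of $(\fp+a)_{=\underline b}$ into $\mathscr{L}(D')$, so no base point is needed.

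It remains to bound $\deg(D')$. We have $\sum_{w\in\cS} D'_w\deg(w) \le \sum_w b_w\deg(w) + \sum_w \max\{0,-\ord_w a\}\deg(w)$, minus the overlap. The first sum equals $\fb\rho_\cS$, and the second is at most $h_K(a)$ by \eqref{eq:hkdeg}. For the saving, use the place $w_0$ in $\mathbb{B}_\fb^{(12)}$ with $b_{w_0} > \max\{0,-\ord_{w_0}(a)\}$. At $w_0$ the max is $b_{w_0}$, so the $a$-contribution at $w_0$ is not counted. That contribution is $\max\{0,-\ord_{w_0}a\}\deg(w_0)\ge R(a)$ by \eqref{eq:defRa}. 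Hence
$$
\deg(D') \le \fb\rho_\cS + h_K(a) - R(a) - \deg(\fp),
$$
and therefore $\#(\fp+a)_{=\underline b}\le q^{\ell(D')}\le q^{\deg(D')+1}$ when $\deg D'\ge0$. When $\deg D'<0$, $\ell(D')=0$ and the set has at most $1$ element. Together these give the bound $1+q^{\fb\rho_\cS-\deg(\fp)+h_K(a)-R(a)+1}$.

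The main obstacle is the bookkeeping in the degree estimate: checking that the $w_0$ term removes at least $R(a)$ and not something smaller. The inequality $\max(x,y)\le x+y-\min(x,y)$ applied placewise makes this work, since at $w_0$ we have $\min = \max\{0,-\ord_{w_0}a\}$. One also has to confirm that $\ell(A)\le \deg A+1$ is valid for all $\deg A\ge0$, including the range where only Clifford applies; it is, since there $1+\deg(A)/2\le 1+\deg A$.
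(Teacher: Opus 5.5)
Your proof is correct and is essentially the paper's argument. Your divisor $D'$ coincides with the paper's divisor $Q$: on $\cS_{\underline b}$ one has $b_v>0$, off it $b_v=0$, and at the distinguished place $b_{w_0}>\max\{0,-\ord_{w_0}(a)\}$. The inclusion $(\fp+a)_{=\underline b}-a\subset\mathscr{L}(D')$, the degree bound $\deg(D')\le \fb\rho_\cS-\deg(\fp)+h_K(a)-\max\{0,-\ord_{w_0}(a)\}\deg(w_0)$ and the Riemann--Roch/Clifford estimate $\ell(D')\le\max\{0,\deg(D')+1\}$ are exactly as there. The opening difference trick with a base point $\theta_0$ is unnecessary, as you yourself note, and can be dropped.
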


\begin{proof}
Let $w \in \mathcal{S}$ be such that $b_w > \max\{0,-\ord_w(a)\}$. Define a divisor $Q \in {\rm Div}(K)$ by 
\begin{equation}\label{eq:defqw}
Q=
\begin{cases}
b_w & \textrm{if } v=w,\\
\max\{b_v,-\ord_v(a)\} &\textrm{if } v \in \cS_{\underline{b}}\setminus\{w\},\\
\max\{0,-\ord_v(a)\} &\textrm{if }v \in \cS \setminus (\cS_{\underline{b}}\cup\{w\}),\\
-\sP_v &\textrm{if }v\in\cM_K\setminus \cS.
\end{cases}
\end{equation} 

We first claim that $(\mathfrak{p}+a)_{=\underline{b}}-a \subset \mathscr{L}(Q)$, hence $(\fp +a)_{=\underline{b}} \subset \mathscr{L}(Q)+a$. Indeed, let $\alpha \in (\mathfrak{p}+a)_{=\underline{b}}-a \subset \mathfrak{p}$. By definition we have:
\vskip 1mm
\noindent
$\bullet$ $\ord_v(\alpha+a)=-b_v$ for all $v \in \cS_{\underline{b}}$;
\vskip 1mm
\noindent
$\bullet$ $\ord_v(\alpha+a) \geq 0$ for all $v \in \cS \setminus \cS_{\underline{b}}$.
\vskip 1mm
\noindent
Since 
$$
\ord_v(\alpha)=\ord_v((\alpha+a)-a) \geq \min\{\ord_v(\alpha+a),\ord_v(a)\},
$$ for all $v \in \cM_K$, it follows that:
\vskip 1mm
\noindent
$\bullet$ $\ord_w(\alpha)=-b_w$;
\vskip 1mm
\noindent
$\bullet$ $\ord_v(\alpha) \geq \min\{-b_v,\ord_v(a)\}$ for all $v \in \cS_{\underline{b}} \setminus \{w\}$;
\vskip 1mm
\noindent
$\bullet$ $\ord_v(\alpha) \geq \min\{0,\ord_v(a)\}$ for all $v \in \cS \setminus(\cS_{\underline{b}} \cup \{w\})$.
\vskip 1mm
\noindent
Moreover, since $\alpha \in \mathfrak{p}$, by \eqref{eq:defP} we have $\ord_v(\alpha) \geq \sP_v$ for all $v \in \cM_K \setminus \cS$. This proves the claim.

To complete the proof of the lemma, observe that
$$
Q \leq -\mathscr{P}+\sum_{v \in \cM_K} b_v \cdot v +(a)_{\infty}-\max\{0,-\ord_w(a)\}\cdot w.
$$ Hence, by \eqref{eq:hkdeg}, $\deg(Q) \leq \fb \rho_\cS-\deg(\fp)+h_K(a)-\max\{0,-\ord_w(a)\}\deg(w)$.
By the Riemann--Roch theorem together with Clifford's theorem, it follows that
$$
\ell(Q) \leq 
\begin{cases}
\fb \rho_\cS-\deg(\fp)+h_K(a)-\max\{0,-\ord_w(a)\}\deg(w)+1 &\textrm{if }\deg(Q) \geq 0,\\
0 & \textrm{ otherwise}.
\end{cases}
$$
Consequently,
$$
\#(\fp+a)_{=\underline{b}} \leq \#(\mathscr{L}(Q))  \leq  1+q^{\fb \rho_\cS-\deg(\fp)+h_K(a)-\max\{0,-\ord_w(a)\}\deg(w)+1} \leq 1+q^{\fb \rho_\cS-\deg(\fp)+h_K(a)-R(a)+1}.
$$ This completes the proof.

\end{proof}

\begin{lem}\label{lem:Bb11}
Define a divisor $Q \in {\rm Div}(K)$ by 
\begin{equation}\label{eq:defQ}
Q_v=
\begin{cases}
\max\{0,-\ord_v(a)\} &\textrm{if }v \in \cS,\\
-\sP_v &\textrm{if }v\in\cM_K\setminus \cS.
\end{cases}
\end{equation}
Then 
$$
\bigcup_{\underline{b} \in \mathbb{B}^{(11)}}(\mathfrak{p}+a)_{=\underline{b}} \subset \mathscr{L}(Q)+a.
$$ Moreover
$$
\#\mathscr{L}(Q)\leq 1+q^{-\deg(\fp)+h_K(a)+1}.
$$
\end{lem}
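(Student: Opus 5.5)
The plan mirrors the proof of Lemma \ref{lem:Bb}, but without a distinguished place $w$. There are two steps: first show that every element of $\bigcup_{\underline{b} \in \mathbb{B}^{(11)}}(\mathfrak{p}+a)_{=\underline{b}}$, shifted by $-a$, lies in $\mathscr{L}(Q)$; then bound $\deg(Q)$ and apply Riemann--Roch together with Clifford's theorem.

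For the inclusion, take $\underline{b} \in \mathbb{B}^{(11)}_\fb$ and $\theta \in (\mathfrak{p}+a)_{=\underline{b}}$, and set $\alpha=\theta-a \in \mathfrak{p}$. We check $\ord_v(\alpha) \geq -Q_v$ at every place.
\begin{itemize}
\item For $v \in \cS$, the definition of $(\mathfrak{p}+a)_{=\underline{b}}$ gives $\ord_v(\theta) \geq -b_v$. This holds whether $b_v>0$ (equality) or $b_v=0$ ($\ord_v(\theta)\geq 0$). Since $\underline b\in\mathbb{B}^{(11)}_\fb$, we have $b_v \leq \max\{0,-\ord_v(a)\}=Q_v$. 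The ultrametric inequality then gives $\ord_v(\alpha) \geq \min\{\ord_v(\theta),\ord_v(a)\} \geq \min\{-Q_v,\ord_v(a)\} \geq -Q_v$, because $-\ord_v(a) \leq Q_v$.
\item For $v \notin \cS$, the fact that $\alpha\in\mathfrak{p}$ together with \eqref{eq:defP} gives $\ord_v(\alpha) \geq \sP_v = -Q_v$.
\end{itemize}
Hence $\alpha \in \mathscr{L}(Q)$, and so $\theta\in\mathscr{L}(Q)+a$.

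For the cardinality bound, note that $\sum_{v\in\cS}\max\{0,-\ord_v(a)\}\deg(v) \leq \deg((a)_\infty) = h_K(a)$ by \eqref{eq:hkdeg}; here $a\in\cO_\cS$, so in fact $(a)_\infty$ is supported on $\cS$ and equality holds. Also $\deg(\sP)=\deg(\mathfrak{p})$. Therefore $\deg(Q)= h_K(a)-\deg(\mathfrak{p})$.

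If $\deg(Q)<0$, then $\ell(Q)=0$ and $\#\mathscr{L}(Q)=1$. If $0\leq\deg(Q)<2g_K-1$, Clifford's theorem gives $\ell(Q)\leq 1+\deg(Q)/2 \leq \deg(Q)+1$. If $\deg(Q)\geq 2g_K-1$, Riemann--Roch gives $\ell(Q)=\deg(Q)-g_K+1\leq \deg(Q)+1$. In every case $\#\mathscr{L}(Q)=q^{\ell(Q)}\leq 1+q^{-\deg(\fp)+h_K(a)+1}$.

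There is no real obstacle here. The only point requiring care is the sign bookkeeping at the places $v\in\cS$ with $b_v=0$: there one uses only $\ord_v(\theta)\geq 0$, which is still compatible with the bound $-Q_v$.
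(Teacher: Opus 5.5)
Your proof is correct and follows the same route as the paper. The inclusion comes from the ultrametric inequality at the places of $\cS$ (which the paper only sketches by reference to Lemma \ref{lem:Bb}) and from membership in $\fp$ at the other places; the count then comes from $\deg(Q)\le h_K(a)-\deg(\fp)$ combined with Riemann--Roch and Clifford. Your equality $\deg(Q)=h_K(a)-\deg(\fp)$, which holds because $a\in\cO_\cS$, is a harmless sharpening of the paper's inequality $Q\le -\sP+(a)_\infty$.
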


\begin{proof}
Let $\underline{b} \in \mathbb{B}_\fb^{(11)}$. By an argument similar to that used in Lemma \ref{lem:Bb}, we obtain
$(\mathfrak{p}+a)_{=\underline{b}}-a \subset \mathscr{L}(Q)$, hence $(\fp+a)_{=\underline{b}} \subset \mathscr{L}(Q)+a$. This proves the first statement.

For the second statement, observe that
$$
Q \leq -\mathscr{P} +(a)_{\infty}.
$$ Therefore $\deg(Q) \leq -\deg(\fp)+h_K(a)$ by \eqref{eq:hkdeg}.
By the Riemann--Roch theorem together with Clifford's theorem, it follows that
$$
\ell(Q) \leq 
\begin{cases}
-\deg(\fp)+h_K(a)+1 &\textrm{if }\deg(Q) \geq 0,\\
0 & \textrm{ otherwise}.
\end{cases}
$$
Consequently,
$$
\#\mathscr{L}(Q)  \leq  1+q^{-\deg(\fp)+h_K(a)+1},
$$ as claimed.
\end{proof}

\begin{lem}\label{cor:22as}
Suppose that $\underline{b} \in \mathbb{B}_\fb^{(0)}$. Then
$$
\#(\mathfrak{p}+a)_{=\underline{b}}=\left( \prod_{v \in \cS} \left(1 -\frac{1}{q^{\deg(v)}}\right) \right) \cdot \frac{1}{q^{\deg(\fp)}} \cdot q^{\fb \rho_\cS-g_K+1}+ O_{\cS} \left( 1 + q^{\frac{\fb \rho_\cS-\deg(\fp)}{2}} \right).
$$
\end{lem}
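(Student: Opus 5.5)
Translating by $a$, we must count $\alpha\in\mathfrak{p}$ such that $\alpha+a$ has pole divisor exactly $\sum_{v\in\cS} b_v\cdot v$. For $\underline{b}\in\mathbb{B}_\fb^{(0)}$ we have $b_v>\max\{0,-\ord_v(a)\}$ for every $v\in\cS$. Hence $\ord_v(\alpha+a)=-b_v$ holds if and only if $\ord_v(\alpha)=-b_v$, by the strict triangle inequality, since $\ord_v(a)>-b_v$.

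Away from $\cS$, both $\alpha$ and $a$ are integral, so no condition arises there. We therefore set
$$D=\sum_{v\in\cS} b_v\cdot v-\sP ,$$
and we need to count
$$\#\{\alpha\in\mathscr{L}(D)\mid \ord_v(\alpha)=-b_v \textrm{ for all } v\in\cS\}.$$
Note that $\deg(D)=\fb\rho_\cS-\deg(\fp)$. We will use inclusion--exclusion over subsets $J\subset\cS$:
$$\#(\fp+a)_{=\underline{b}}=\sum_{J\subset\cS}(-1)^{\#J}\,\#\mathscr{L}\Bigl(D-\sum_{v\in J}v\Bigr),$$
since $\ord_v(\alpha)>-b_v$ means $\alpha\in\mathscr{L}(D-v)$.

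Each divisor $D_J=D-\sum_{v\in J}v$ has degree $\deg(D)-\sum_{v\in J}\deg(v)$. We split into two cases according to the size of $\deg(D)$.

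\emph{Case $\deg(D)\ge 2g_K-1+\deg(Q)$, with $Q=\sum_{v\in\cS}v$.} Here every $D_J$ has degree at least $2g_K-1$. Riemann--Roch gives $\ell(D_J)=\deg(D_J)-g_K+1$ for each $J$. The alternating sum then factors as
$$q^{\deg(D)-g_K+1}\sum_{J}(-1)^{\#J}q^{-\sum_{v\in J}\deg(v)}=q^{\fb\rho_\cS-\deg(\fp)-g_K+1}\prod_{v\in\cS}\bigl(1-q^{-\deg(v)}\bigr).$$
This is exactly the main term, with no error at all.

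\emph{Case $\deg(D)<2g_K-1+\deg(Q)$.} Here $\deg(D)$ is bounded by a constant depending only on $K$ and $\cS$. Both the main term and $\#(\fp+a)_{=\underline{b}}\le\#\mathscr{L}(D)$ are then $O_\cS(1)$. Indeed, by Clifford's theorem, or trivially when $\deg(D)<0$, we have $\ell(D)\le 1+\max(0,\deg(D))$ or $\ell(D)=0$. Hence the difference is $O_{\cS}(1)$, which is absorbed into the stated error term $O_\cS\bigl(1+q^{(\fb\rho_\cS-\deg(\fp))/2}\bigr)$.

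For a unified treatment one could instead use, in the intermediate range, Clifford's bound $\ell(D_J)\le 1+\deg(D_J)/2$ to bound each term by $q^{1+\deg(D)/2}$. This is the source of the square-root error. With the threshold chosen in terms of $g_K$ and $\deg(Q)$, however, the bounded case already suffices.

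The main obstacle is the first step. One must check that the conditions at $\cS$ really decouple from $a$, which is exactly where the defining condition of $\mathbb{B}_\fb^{(0)}$ is used. One must also check that the congruence $\alpha\in\fp$ is faithfully encoded by $-\sP$ in $D$, which holds because $\sP$ has support disjoint from $\cS$, as in \eqref{eq:defP}.

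A second point needs care. We must verify that the implied constant depends only on $K$ and $\cS$, uniformly in $\fp$, $a$ and $\underline{b}$. This holds because the threshold $2g_K-1+\deg(Q)$ and the Clifford bound involve only $g_K$ and $\cS$.
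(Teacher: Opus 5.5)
Your proof is correct and follows the paper's route. Like the paper, you use $b_v>\max\{0,-\ord_v(a)\}$ to identify $(\fp+a)_{=\underline{b}}-a$ with $\fp_{=\underline{b}}$, then apply inclusion--exclusion over subsets of $\cS$ and Riemann--Roch. The only difference is the case split: the paper treats each $D_J$ separately at threshold $2g_K-1$ and bounds the low-degree ones by Clifford as $1+q^{(\fb\rho_\cS-\deg(\fp))/2+1}$, whereas your single threshold $\deg(D)\ge 2g_K-1+\deg(Q)$ shows this square-root term only arises when $\fb\rho_\cS-\deg(\fp)$ is bounded, so you obtain the sharper error $O_{\cS}(1)$. (In the range $2g_K-1\le\deg(D)<2g_K-1+\deg(Q)$, your bound on $\ell(D)$ comes from Riemann--Roch rather than Clifford, which is immaterial.)
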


\begin{proof} We first claim that
$$
(\mathfrak{p}+a)_{=\underline{b}} - a
=
\left\{
\alpha \in K \;\middle|\;
\begin{array}{l}
\ord_v(\alpha) = -b_v \quad \text{for all } v \in \cS_{\underline{b}},\\[2pt]
\ord_v(\alpha) \ge \sP_v \quad \text{for all } v \in \cM_K \setminus \cS
\end{array}
\right\}=(\fp)_{=\underline{b}}.
$$ Indeed, $$
(\mathfrak{p}+a)_{=\underline{b}}
=
\Bigl\{
\alpha + a \in K \;\Big|\;
\begin{array}{l}
\ord_v(\alpha + a) = -b_v \quad \text{for all } v \in \cS,\\[1mm]
\ord_v(\alpha) \ge \sP_v \quad \text{for all } v \in \cM_K \setminus \cS
\end{array}
\Bigr\}.
$$
Fix $v \in \cS$. Since $\ord_v(a) > -b_v$ by \eqref{eq:defb00}, we have
$$
-b_v = \ord_v(\alpha+a) \ge \min\{\ord_v(\alpha), \ord_v(a)\},
$$
and hence $\ord_v(\alpha) = -b_v$. Conversely, $\ord_v(\alpha) = -b_v$ clearly implies
$\ord_v(\alpha+a) = -b_v$. This completes the proof of the claim.

To complete the proof of the lemma, let $Q_{\underline{b}}=\sum_{v \in \cS} b_v \cdot v$. For $I \subset \cS$, let $e_I=\sum_{v \in I} v \in \cM_K$. Then
$$
\mathfrak{p}_{=\underline{b}}=\mathscr{L}(Q_{\underline{b}}-\sP) \setminus \left( \bigcup_{v \in \cS } \mathscr{L}(Q_{\underline{b}}-e_{\{v\}}-\sP)\right).
$$
By the inclusion-exclusion principle, it follows that
$$
\# \mathfrak{p}_{=\underline{b}}=\sum_{I \subset \cS} (-1)^{\# I} \# \mathscr{L}(Q_{\underline{b}}-e_{I}-\sP). 
$$  

Let $I \subset \cS$. Then
\begin{equation}\label{eq:Bund}
\deg(Q_{\underline{b}}-e_{I}-\sP)=\fb \rho_\cS- \sum_{v \in I} \deg(v) - \deg(\fp)
\end{equation}
\vskip 1mm
\noindent
$\bullet$ If $\eqref{eq:Bund} < 2 g_K-1$, then by Clifford's Theorem,
$$
\#\mathscr{L}(Q_{\underline{b}}-e_I-\sP) \leq 1+ q^{\frac{1}{2}\left(\fb \rho_\cS-\sum_{v \in I}\deg(v) -\deg(\fp)\right)+1} \leq 1 + q^{\frac{\fb \rho_\cS-\deg(\fp)}{2}+1}.
$$ Moreover $$
q^{\fb \rho_\cS-\sum_{v \in I} \deg(v) -\deg(\fp)-g_K+1} \leq q^{g_K}.
$$
\vskip 1mm
\noindent
$\bullet$ If $\eqref{eq:Bund} \geq 2 g_K-1$, then by the Riemann--Roch theorem,
$$
\#\mathscr{L}(Q_{\underline{b}}-e_I-\sP)=q^{\fb \rho_\cS-\sum_{v \in I} \deg(v)-\deg(\fp)-g_K+1}.
$$
Combining these estimates gives
\begin{align*}
\#\mathfrak{p}_{=\underline{b}}&=\sum_{I \subset \cS} (-1)^{\#I} \# \mathscr{L}(Q_{\underline{b}}-e_I-\sP)\\
&=\sum_{I \subset \cS} \left( (-1)^{\#I} \left[ q^{-\sum_{v \in I} \deg(v)} \cdot q^{\fb \rho_\cS-g_K+1-\deg(\fp)} \right] + O\left( 1 + q^{g_K}+ q^{\frac{\fb \rho_\cS-\deg(\fp)}{2}} \right) \right)\\
&=\left( \prod_{v \in \cS} \left( 1 - \frac{1}{q^{\deg(v)}} \right) \right) \cdot q^{\fb \rho_\cS-g_K+1-\deg(\fp)}+ O_{\cS} \left(1+q^{\frac{\fb \rho_\cS-\deg(\fp)}{2}}\right).
\end{align*}
Finally, since $\underline{b} \in \mathbb{B}_\fb^{(0)}$, the claim above gives
$$
\#(\mathfrak{p}+a)_{=\underline{b}}=\#(\mathfrak{p})_{=\underline{b}}, 
$$ which completes the proof. 
\end{proof}

\subsubsection*{$**$ Asymptotics of $(\mathfrak{p}+a)_{=\fb}$}
Recall that $\fb \in \mathcal{U}=\sum_{v \in \cS} \N_0 \cdot \frac{\deg(v)}{\rho_\cS}$ and that $\rho_\cS=\gcd\{\deg(v) \mid v \in \cS\}$.

\begin{prop}\label{prop:termcruc}
We have $$
N^{=}_{h_K}(\mathfrak{p}+a,\mathfrak{b}\rho_\cS)= \tilde \kappa_{K,\cS} \cdot B^{s-1}\cdot q^{\fb \rho_\cS-\deg(\fp)-g_K+1}+ O_{\cS}(E_\cS(\fb,a,\mathfrak{p})), 
$$ where
$$
\tilde \kappa_{K,\cS}=\frac{\rho_\cS}{(s-1)! \prod_{v \in \cS} \deg(v)} \cdot \prod_{v \in \cS} \left( 1 - \frac{1}{q^{\deg(v)}}\right),
$$ and
$$
E_\cS(\fb,a,\fp)=
\begin{cases}
\left(1+q^{\frac{\fb \rho_\cS-\deg(\fp)}{2}}\right)\fb^{s-1}+(1+h_K(a))\left(q^{\fb \rho_\cS-\deg(\fp)+h_K(a)-R(a)}\right)\fb^{s-2}
+q^{-\deg(\fp)+h_K(a)+1}& \textrm{if } s >1,\\
1+q^{h_K(a)-\deg(\fp)}+q^{\frac{\fb \rho_\cS-\deg(\fp)}{2}}&\textrm{if } s=1,
\end{cases}
$$
where $R(a)$ is defined in \eqref{eq:defRa}.

The constant in the error term depends on $\cS$ (and hence on $K$).
\end{prop}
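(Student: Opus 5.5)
We will start from the decomposition \eqref{eq:44}, splitting $\mathbb{B}_\fb$ into $\mathbb{B}_\fb^{(0)}$ and $\mathbb{B}_\fb^{(1)}=\mathbb{B}_\fb^{(11)}\cup\mathbb{B}_\fb^{(12)}$, and estimate the three resulting sums separately. (In the statement we read the factor $B^{s-1}$ as $\fb^{s-1}$, matching the count of Corollary \ref{cor:444a}.)

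\emph{The main term} comes from $\mathbb{B}_\fb^{(0)}$. By Lemma \ref{cor:22as}, each $\underline{b}\in\mathbb{B}_\fb^{(0)}$ contributes the same quantity
\[
\prod_{v\in\cS}\left(1-q^{-\deg(v)}\right)q^{\fb\rho_\cS-\deg(\fp)-g_K+1},
\]
up to an error $O_\cS\big(1+q^{(\fb\rho_\cS-\deg(\fp))/2}\big)$. Multiplying by $\#\mathbb{B}_\fb^{(0)}$ from Corollary \ref{cor:444a} gives the main term $\tilde\kappa_{K,\cS}\,\fb^{s-1}q^{\fb\rho_\cS-\deg(\fp)-g_K+1}$. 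Here we use $\rho_\cS^{s}/\prod\deg(v)$, so $\tilde\kappa_{K,\cS}$ absorbs $\rho_\cS^{s-1}$; the normalization should be matched with the stated constant. Two errors arise at this step:
\begin{itemize}
\item The per-point error times $\#\mathbb{B}_\fb^{(0)}=O(1+\fb^{s-1})$ gives the term $(1+q^{(\fb\rho_\cS-\deg\fp)/2})\fb^{s-1}$.
\item The counting error $O((1+h_K(a))(1+\fb^{s-2}))$ times the main per-point size $q^{\fb\rho_\cS-\deg\fp}$ gives a contribution $(1+h_K(a))q^{\fb\rho_\cS-\deg\fp}\fb^{s-2}$.
\end{itemize}

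\emph{The $\mathbb{B}_\fb^{(12)}$ part.} Lemma \ref{lem:Bb} bounds each term by $1+q^{\fb\rho_\cS-\deg\fp+h_K(a)-R(a)+1}$. Summing over at most $\#\mathbb{B}_\fb^{(1)}=O((1+h_K(a))(1+\fb^{s-2}))$ indices (Corollary \ref{cor:dedea}) yields the second displayed error term. The $+1$ parts give $(1+h_K(a))\fb^{s-2}$, which is dominated by the previous terms whenever $h_K(a)$ is controlled. We will check whether this needs to be absorbed or listed; most likely it is absorbed, since $R(a)\le h_K(a)$ makes the exponential term at least as large as the $+1$ parts in the relevant range.

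\emph{The $\mathbb{B}_\fb^{(11)}$ part.} Lemma \ref{lem:Bb11} places the whole union inside one coset $\mathscr{L}(Q)+a$. Since the sets $(\fp+a)_{=\underline b}$ are disjoint, this union contributes at most $1+q^{-\deg\fp+h_K(a)+1}$ in total, with no factor $\#\mathbb{B}^{(11)}$. This is the last error term.

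\emph{The case $s=1$.} Here $\mathbb{B}_\fb$ is a single point, which lies in $\mathbb{B}^{(0)}$ or in $\mathbb{B}^{(11)}$, since $\mathbb{B}^{(12)}$ is empty. Lemmas \ref{cor:22as} and \ref{lem:Bb11} directly give the stated error. In the $\mathbb{B}^{(11)}$ case the main term itself is $O(q^{h_K(a)-\deg\fp})$, because $b\le h_K(a)$ there, so it can be absorbed.

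\emph{The main obstacle} is bookkeeping. The error from Corollary \ref{cor:444a} multiplied by the main-term size must be shown to fit within the stated $E_\cS$, namely the $h_K(a)$ term with the exponent $-R(a)$. Without $-R(a)$ this is immediate. To get it, we would refine as follows: $\mathbb{B}^{(1)}\setminus\mathbb{B}^{(0)}$ comprises those $\underline b$ where some $b_v\le\max\{0,-\ord_v(a)\}$, so the discrepancy in the count should be measured against $q^{h_K(a)-R(a)}$, which is $\ge1$. If this does not work, we accept a slightly weaker error, which still suffices for the summation in the subsequent corollary.
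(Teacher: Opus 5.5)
Your decomposition and lemmas are the paper's. You split \eqref{eq:44} into $\mathbb{B}_\fb^{(0)}$, $\mathbb{B}_\fb^{(11)}$ and $\mathbb{B}_\fb^{(12)}$. You use Corollary~\ref{cor:444a} with Lemma~\ref{cor:22as} for the main term, Lemma~\ref{lem:Bb} for $\mathbb{B}_\fb^{(12)}$, and Lemma~\ref{lem:Bb11} plus disjointness for $\mathbb{B}_\fb^{(11)}$. The $s=1$ case has the same two subcases; your remark that the main term is $O(q^{h_K(a)-\deg(\fp)})$ in the second one is a step the paper leaves implicit. One of your open points closes more easily than you expect, and one justification has to be replaced.

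\emph{Your ``main obstacle'' is not one, and no refinement of the count is needed.} $R(a)$ is the minimum of the summands of $h_K(a)$ indexed by $\cS$, all nonnegative, so $R(a)\le h_K(a)$. The $-R(a)$ comes paired with $+h_K(a)$, so $\fb\rho_\cS-\deg(\fp)+h_K(a)-R(a)\ge \fb\rho_\cS-\deg(\fp)$. Hence the error $(1+h_K(a))\,q^{\fb\rho_\cS-\deg(\fp)}(1+\fb^{s-2})$ coming from Corollary~\ref{cor:444a} goes directly into the second term of $E_\cS(\fb,a,\fp)$. This is all the paper uses when it says \eqref{eq:defE1} is absorbed.

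\emph{Your justification for the $+1$ parts of Lemma~\ref{lem:Bb}, summed over $\mathbb{B}_\fb^{(12)}$, does not work.}
\begin{itemize}
\item $(1+h_K(a))\fb^{s-2}$ is dominated by the exponential term only when $\fb\rho_\cS-\deg(\fp)+h_K(a)-R(a)\ge 0$. This fails when $\deg(\fp)$ is large.
\item In that regime, if also $h_K(a)\gg\fb$, none of the terms $\fb^{s-1}$, $q^{h_K(a)-\deg(\fp)+1}$ or $(1+q^{(\fb\rho_\cS-\deg(\fp))/2})\fb^{s-1}$ dominates it either.
\end{itemize}
The paper's fix is to use also $\#\mathbb{B}_\fb^{(12)}\le \#\mathbb{B}_\fb=O_\cS(\fb^{s-1})$ from Lemma~\ref{lem:dedea}. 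This gives $\#\mathbb{B}_\fb^{(12)}=O_\cS(\min\{1+h_K(a),\fb\rho_\cS\}\fb^{s-2})$, so the $+1$ parts cost only $O_\cS(\fb^{s-1})$ and go into the first term of $E_\cS$. Alternatively, when that exponent is negative, the divisor in Lemma~\ref{lem:Bb} has negative degree. Then $(\fp+a)_{=\underline b}\subset\{a\}$, and by disjointness all such $\underline b$ together contribute at most $1$.

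Your normalization worry is justified. Corollary~\ref{cor:444a} times Lemma~\ref{cor:22as} gives $\tilde\kappa_{K,\cS}\,(\fb\rho_\cS)^{s-1}q^{\fb\rho_\cS-\deg(\fp)-g_K+1}$ with $\tilde\kappa_{K,\cS}$ exactly as stated. So the factor $B^{s-1}$ should be read as $(\fb\rho_\cS)^{s-1}$, not $\fb^{s-1}$. This matches Theorem~\ref{cor:funcfiel}, and the paper's proof passes over this factor without comment.
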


\begin{proof}
 Assume first that $s > 1$. From \eqref{eq:44},
$$
N^{=}_{h_K}(\mathfrak{p}+a,\mathfrak{b}\rho_\cS)=\sum_{\underline{b} \in \mathbb{B}_\fb^{(0)}} \#(\mathfrak{p}+a)_{=\underline{b}}+\sum_{\underline{b} \in \mathbb{B}_\fb^{(1)}} \#(\mathfrak{p}+a)_{=\underline{b}}.
$$
From $\#\mathbb{B}_\fb=O_{\mathcal{S}}(\fb^{s-1})$  (by Lemma \ref{lem:dedea}) and Corollary \ref{cor:dedea}, we have
$$
\mathbb{B}_{\fb}^{(12)} \leq \mathbb{B}_{\fb}^{(1)}=O_{\mathcal{S}}(\min\{1+h_K(a),\fb \rho_\cS\}\fb^{s-2}).
$$
From Lemma \ref{lem:Bb}, it follows that
\begin{align}\label{eq:55}
\sum_{\underline{b} \in \mathbb{B}_\fb^{(12)}} \#(\mathfrak{p}+a)_{\underline b}=
O_{\cS} \left((1+h_K(a))q^{\fb \rho_\cS-\deg(\fp)+h_K(a)-R(a)} \fb^{s-2}+\fb^{s-1}\right).
\end{align}
\noindent
Using Lemma \ref{lem:Bb11}, 
\begin{equation}\label{eq:cdssd2}
\sum_{\underline{b} \in \mathbb{B}_\fb^{(11)}}\#(\mathfrak{p}+a) \leq \# \mathscr{L}(Q)=O_{\mathcal{S}}\left( 1+q^{-\deg(\fp)+h_K(a)+1} \right).
\end{equation}
\vskip 1mm
\noindent
Using Corollary \ref{cor:444a} and Lemma \ref{cor:22as}, and noting that $\#\mathbb{B}_\fb^{(0)}=O_{\mathcal{S}}(\fb^{s-1})$ (by Lemma \ref{lem:dedea}), we get
$$
\sum_{\underline{b} \in \mathbb{B}_\fb^{(0)}} \#(\mathfrak{p}+a)_{=\underline{b}}=\tilde\kappa_{K,\cS} \fb^{s-1} q^{\fb \rho_\cS-\deg(\fp)-g_K+1}+O_{\cS}(E_1),
$$ where
\begin{align}\label{eq:defE1}
E_1&=\left(1+q^{\frac{\fb \rho_\cS-\deg(\fp)}{2}}\right)\fb^{s-1}+q^{\fb \rho_\cS-\deg(\fp)}(1+h_K(a))(1+\fb^{s-2}).
\end{align}
Clearly \eqref{eq:55}, \eqref{eq:cdssd2} and \eqref{eq:defE1} are absorbed into $O_{\cS}\left(E_{\cS}(\fb,a,\mathfrak{p})\right)$. 
 
 Next assume $s=1$. Then $\mathcal{S}$ consists of a single place $\{w\}$, and 
 $$
 \mathbb{B}_\fb=\{b= \fb\}.
 $$
 Since $\fb >0$, we have $\cS_{b}=\cS$. 
 \vskip 1mm
 \noindent
 $\bullet $ Suppose $b > -\ord_w(a)$, that is $b \in \mathbb{B}_\fb^{(0)}$. By Lemma \ref{cor:22as},
\begin{align}
N^{=}_{h_K}(\mathfrak{p}+a,\mathfrak{b}\rho_\cS)&=\#(\mathfrak{p}+a)_{=b}=\left( \prod_{v \in \cS}\left( 1 - \frac{1}{q^{\deg(v)}}\right) \right) q^{\fb \rho_\cS-\deg(\fp)-g_K+1}+O_{\cS}\left(1+q^{\frac{\fb \rho_\cS-\deg(\fp)}{2}}\right)\\
&=\tilde \kappa_{K,\cS} q^{\fb \rho_\cS-\deg(\fp)-g_K+1}+O_{\cS}\left(1+q^{\frac{\fb \rho_\cS-\deg(\fp)}{2}}\right).
\end{align}
\vskip 1mm
\noindent
$\bullet$ Suppose now that $b \leq -\ord_w(a)$. Then 
$$
\fb \rho_\cS=b\deg(w) \leq - \ord_w(a) \deg(w)=h_K(a).
$$ For $\gamma \in (\mathfrak{p}+a)_{=\fb}$, we have 
\begin{align*}
h_K(\gamma-a)&=\max \{0,-\ord_w((\gamma)-a) \}\cdot \deg(w)\\
&\leq \max\{ 0, -\ord_v(\gamma),-\ord_w(a)\}\cdot \deg(w)\\
&=\max\{0,h_K(\gamma),h_K(a)\} \leq \max\{0,\fb \rho_\cS,h_K(a)\}=h_K(a),
\end{align*} and so $\gamma-a \in \mathfrak{p}_{=h_K(a)}$.
Thus $(\mathfrak{p}+a)_{=\fb}-a \subset \mathfrak{p}_{=h_K(a)}$, and therefore
$$
N^{=}_{h_K}(\mathfrak{p}+a,\mathfrak{b}\rho_\cS)=\#((\mathfrak{p}+a)_{=\fb}-a) \leq \#(\mathfrak{p})_{=h_K(a)}. 
$$ By Lemma \ref{lem:Bb11},
$$
\#(\mathfrak{p})_{=h_K(a)} \leq 1 +q^{h_K(a)-\deg(\fp)+1}=O_{\cS}(1+q^{h_K(a)-\deg(\fp)}).
$$ This completes the proof.
\end{proof}

\subsubsection{Proof of Theorem \ref{cor:funcfiel}} To get an error term that is uniform in $a$, we need to work with a small representative of the coset $a+\mathfrak{p}$. For a nonzero ideal $\mathfrak p\subset \cO_{\cS}$, define
$$
\Psi(\mathfrak p)=\sup_{x \in \cO_{\cS}/\mathfrak p} \min_{\alpha \in \pi^{-1}(x)} h_K(\alpha),
$$
where $
\pi: \cO_{\cS} \longrightarrow \cO_{\cS}/\mathfrak p
$
is the canonical reduction map. Let $Q$ be a nonzero effective divisor with support in $\cS$.

\begin{lem}\label{lem:ringintclas}
Let 
$$
m:= \left\lceil\frac{2g_K-1+ \deg(\mathfrak{p})}{\deg(Q)} \right\rceil.
$$ Then the reduction map $$
\pi: \mathscr{L}(m Q) \subset \cO_\cS \longrightarrow \cO_\cS/\mathfrak p
$$ is surjective. In particular,
\begin{equation}\label{eq:bound1}
\Psi(\mathfrak p) \leq\deg (\mathfrak p)+ 2g_K-1+\deg(Q).
\end{equation}
\end{lem}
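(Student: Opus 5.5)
The plan is to realize the reduction map as a linear map of $\F_q$-vector spaces and to compute its kernel with Riemann--Roch. Since $\mathfrak p\subset \cO_\cS$ is an ideal, $\cO_\cS/\mathfrak p$ is an $\F_q$-vector space of dimension $\deg(\fp)=\log_q {\rm N}(\fp)$. The restriction $\pi|_{\mathscr{L}(mQ)}$ is $\F_q$-linear, and $\mathscr{L}(mQ)\subset \cO_\cS$ because $mQ$ is supported in $\cS$. Its kernel is $\mathscr{L}(mQ)\cap \mathfrak p$. By \eqref{eq:defP}, an element $\alpha$ lies in this intersection exactly when $\ord_v(\alpha)\ge -mQ_v$ for $v\in\cS$ and $\ord_v(\alpha)\ge \sP_v$ for $v\notin \cS$. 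Hence the kernel is $\mathscr{L}(mQ-\sP)$, using that ${\rm supp}(\sP)\cap\cS=\emptyset$.

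Next I would apply the Riemann--Roch theorem recalled above. The choice of $m$ gives $m\deg(Q)\ge 2g_K-1+\deg(\fp)$. Therefore $\deg(mQ)\ge 2g_K-1$ and $\deg(mQ-\sP)=m\deg(Q)-\deg(\fp)\ge 2g_K-1$. Riemann--Roch then gives
$$
\ell(mQ)-\ell(mQ-\sP)=\bigl(m\deg(Q)-g_K+1\bigr)-\bigl(m\deg(Q)-\deg(\fp)-g_K+1\bigr)=\deg(\fp).
$$
So the image of $\pi|_{\mathscr{L}(mQ)}$ has dimension $\deg(\fp)=\dim_{\F_q}\cO_\cS/\mathfrak p$, and the map is surjective. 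If $\fp=\cO_\cS$ the statement is trivial; the argument also covers this case with $\deg(\fp)=0$.

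For \eqref{eq:bound1}, every class $x$ has a preimage $\alpha\in\mathscr{L}(mQ)$. For such $\alpha$, \eqref{eq:hkdeg} gives $h_K(\alpha)=\deg((\alpha)_\infty)\le \deg(mQ)=m\deg(Q)$, since $(\alpha)_\infty\le mQ$ and $Q$ is effective; this holds trivially for $\alpha=0$. The ceiling gives $m\deg(Q)<2g_K-1+\deg(\fp)+\deg(Q)$, which yields the bound after taking the minimum over preimages and the supremum over $x$.

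I expect no serious obstacle. The only delicate point is the identification $\ker=\mathscr{L}(mQ-\sP)$, which requires that the ideal of $\cO_\cS$ be exactly cut out by the valuation conditions in \eqref{eq:defP}, that is, that $\cO_\cS$ is a Dedekind domain whose ideals correspond to effective divisors away from $\cS$. The paper already uses this identification.
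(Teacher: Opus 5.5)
Your proof is correct and follows the paper's own argument. You identify the kernel of $\pi|_{\mathscr{L}(mQ)}$ with $\mathscr{L}(mQ-\sP)$, apply Riemann--Roch to both divisors (each of degree $\ge 2g_K-1$ by the choice of $m$) to get an image of size $q^{\deg(\fp)}={\rm N}(\fp)$, and bound the heights of preimages by $m\deg(Q)\le \deg(\fp)+2g_K-1+\deg(Q)$. Your explicit check, via \eqref{eq:defP}, that the kernel is exactly $\mathscr{L}(mQ-\sP)$ simply spells out what the paper's exact sequence uses implicitly.
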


\begin{proof}
For $a\in \mathscr{L}(m Q)\subset \cO_\cS$, we have
\begin{equation}\label{eq:lowboundhka}
h_K(a)=
\sum_{v\in \cS}  \max\{0,-\ord_v(a)\} \deg(v)\le m \deg(Q).
\end{equation}
Since $\deg(m Q) \geq \deg(mQ- \fp) \geq 2g_K-1$, by the Riemann--Roch theorem, we have
$$
\#(\mathscr{L}(m Q ))=
q^{\,m \deg(Q)-g_K+1},
$$
and
$$
\#(\mathscr{L}(m Q- \mathfrak{p}))=q^{m\deg(Q)-\deg(\mathfrak{p})-g_K+1},
$$ hence
$$
\#\mathscr{L}(mQ)/\# \mathscr{L}(mQ -\mathfrak{p})=q^{\deg(\mathfrak{p})}.
$$
In particular
$$
\pi: \mathscr{L}(m Q) \subset \cO_\cS \longrightarrow \cO_\cS/\mathfrak p
$$
is surjective; indeed, we have a short exact sequence
$$
0 \rightarrow \mathscr{L}(mQ-\mathfrak{p}) \rightarrow \mathscr{L}(mQ) \rightarrow \cO_\cS/\mathfrak{p}.
$$ 
Therefore, for every $x\in \cO_\cS/\mathfrak p$, there exists $a\in \mathscr{L}(m Q)$ such that $\pi(a)=x$ and
$$
h_K(a)
\le
m\deg(Q)
\le
\deg (\mathfrak p)
+
2 g_K -1+\deg(Q) \quad \textrm{by \eqref{eq:lowboundhka}}.
$$
Taking the supremum over $x$ gives the desired bound \eqref{eq:bound1}.
\end{proof}

\begin{proof}[Proof of Theorem \ref{cor:funcfiel}] Recall that $Q=\sum_{v \in \cS} v$. Let $E_\cS(\fb,a,{\rm N}(\mathfrak{p}))$ be as in Proposition \ref{prop:termcruc}. By Lemma \ref{lem:ringintclas}, we may assume that
$$
h_K(a) \leq \deg(\fp)+2g_K-1+\deg(Q),
$$ and that

$$
\max\{0,-\ord_w(a)\} \leq m Q_w,
$$ for all $w \in \mathcal{S}$, where 
$$
m:= \left\lceil\frac{2g_K-1+ \deg(\mathfrak{p})}{\deg(Q)} \right\rceil.
$$ Thus,
$$
h_K(a)-R(a)\leq m (\deg(Q)-1) \leq \frac{(2g_K-1+\deg(Q))(\deg(Q)-1)}{\deg(Q)}+\left( 1-\frac{1}{\deg(Q)}\right)\deg(\mathfrak{p}),
$$ where $R(a)$ is defined in \eqref{eq:defRa}; in the first inequality, we use the fact that ${\rm supp}(Q)=\cS$. Moreover, $1+q^{-\deg(\fp)+h_K(a)+1}=O_{K,\cS}(1)$. Letting
$$
E_\cS(\fb,\fp):=
\begin{cases}
\left(1+\frac{1}{{\rm N}(\fp)^{1/2}}\cdot q^{\frac{\fb \rho_\cS}{2}}\right)\fb^{s-1}+\frac{q^{\fb \rho_\cS} }{{\rm N}(\fp)^{1/\deg(Q)}}\deg(\fp)\fb^{s-2}& \textrm{if } s >1,\\
1+\frac{1}{{\rm N}(\fp)^{1/2}}\cdot q^{\frac{\fb \rho_\cS}{2}}&\textrm{if } s=1.
\end{cases},
$$ it follows that $E_\cS(\fb,a,\mathfrak{p})=O_{K,\cS}(E_{\cS}(\fb,\fp))$. For $s \geq 2$, since $\deg(Q) \geq 2$, we have
$$
\frac{\fb^{s-1} q^{\fb \rho_\cS/2}}{{\rm N}(\fp)^{1/2}}=O_q\left( \frac{q^{\fb \rho_\cS} \fb^{s-2}\deg(\fp)}{{\rm N}(\fp)^{1/\deg(Q)}}\right).
$$
Therefore, $E_\cS(\fb,\fp)=O_{K,\mathcal{S}}(\tilde E_\cS(\fb,\fp))$,
which completes the proof.
\end{proof}

\section{Around Ekedahl-Poonen and integral hilbertianity results}
\label{sec:Ekehdal-poonen} 
The main results are  Theorem~\ref{thm:denposHilIrrThm} and Theorem~\ref{thm:ekpoongen}. 
Section \ref{thm-more-general} states and proves Theorem \ref{thm:ekpoongen-2}, which is generalization of Theorem \ref{thm:ekpoongen}. 
Theorem \ref{thm:denposHilIrrThm} is then proved in \S \ref{ssec:int-hil-thm}.
The proof of Theorem \ref{thm:ekpoongen-2} involves a lemma (Lemma \ref{lem:technical}), the proof of which is a technical adjustment of an argument from \cite{Poo03}. The detailed proof of Lemma \ref{lem:technical} is postponed to \S \ref{sec:second-estimate}.

\subsection{Generalizing the Ekedahl-Poonen formula} \label{thm-more-general}

\subsubsection{Statement of Theorem \ref{thm:ekpoongen-2}} \label{ssec:thm31-statement}

Let $K$ be a global field with ring of integers $\mathcal{O}_K$ and $\underline T=(T_1,\ldots,T_k)$ be a $k$-tuple of variables ($k\geq 1$). Let $\underline{Q}=\{Q_1, \ldots, Q_\ell\} \subset \mathcal{O}_K[\ul T]$ be a set of $\ell \geq 2$ nonzero polynomials, coprime in $K[\underline{T}]$. 
For every subset $\Sigma$ of nonzero ideals $\mathfrak{a}$ of $\mathcal{O}_K$, set:

$$
\left\{\begin{matrix}
\mathcal{R}_{\Sigma} = \left\{ \ul m \in \mathcal{O}_K^k \mid \emph{no ideal } \mathfrak{a}\in \Sigma \emph{ divides all of } Q_1(\ul m),\ldots,Q_\ell(\ul m) \right\} \hfill \\
\\
\mathcal{C}_{\Sigma}= \mathcal{O}_K^k \setminus \mathcal{R}_{\Sigma} = \big\{ \ul m \in \mathcal{O}_K^k \mid Q_1(\underline{m})\equiv \dots \equiv Q_\ell(\underline{m})\equiv 0 \hskip -3mm \pmod{\mathfrak a}\ \emph{for some ideal } \mathfrak{a}\in \Sigma\big\} \hfill\\
\end{matrix} \right.
$$

\vskip 1,5mm

\noindent 
and merely write $\mathcal{R}_{\mathfrak a}$ and $\mathcal{C}_{\mathfrak a}$ if $\Sigma= \{\mathfrak a\}$. Note that, with ${\mathcal R}$ and ${\mathcal R}^\star$ introduced in Section \ref{sec:intro}, 
\vskip 1,5mm

\centerline{$\displaystyle
\mathcal{R}_{\emptyset} = \mathcal{O}_K^k \hskip 3mm \hbox{\rm and } \hskip 2mm 
\mathcal{R}_ {({\rm Spec}\hskip 1pt \mathcal{O}_K)^\ast} = \mathcal{R}^\star  
$,}
\vskip 1,5mm

\noindent
and so $\mathcal{R}_ {({\rm Spec}\hskip 1pt \mathcal{O}_K)^\ast} = \mathcal{R}^\star= \mathcal{R}$ if in addition $K$ is of class number $1$. 
\vskip 0,5mm
 
 Given a subset $\mathscr{C} \subset ({\rm Spec}\hskip 1pt \mathcal{O}_K)^\ast$, we say that the set $\Sigma$  is \emph{supported on} $\mathscr{C}$ if the prime ideal factorizations of ideals in  $\Sigma$ only involve prime ideals in $\mathscr{C}$. We denote  further the complement $({\rm Spec}\hskip 1pt \mathcal{O}_K)^\ast \setminus \mathscr{C}$
 by ${\mathscr{C}}^c$.

\begin{thm} \label{thm:ekpoongen-2} Assume that $K$ is an arbitrary global field.
Let $\mathscr{C} \subset ({\rm Spec}\hskip 1pt \mathcal{O}_K)^\ast$ be a finite set, and let $\Sigma$ be a finite set of nonzero ideals of $\mathcal{O}_K$ supported on $\mathscr{C}$. Then  

$$
\mu\left({\mathcal R}_{\Sigma} \cap {\mathcal R}_{{\mathscr{C}}^c}\right)=\mu\left({\mathcal R}_{\Sigma}\right) \cdot \prod_{ \mathfrak{p} \in {\mathscr{C}}^c} \left( 1 - \frac{c_{\mathfrak{p}}}{{\rm N}(\mathfrak{p})^k} \right).
$$

\noindent
where the two terms $\mu(\cdot)$ are indeed densities for the sets ${\mathcal R}_{\Sigma}$ and ${\mathcal R}_{\Sigma} \cap {\mathcal R}_{{\mathscr{C}}^c}$, and the infinite 
product is absolutely convergent.
\end{thm}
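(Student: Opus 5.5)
We follow Poonen's sieve method, adapted to $\mathcal{O}_K$ through Theorem \ref{prop:crucari} and Corollary \ref{lem:boundinarithproBC}. Since ${\mathcal R}_\Sigma$ and ${\mathcal R}_{\mathfrak p}$ are determined by congruence conditions, we work with finitely many ideals at a time. Let $\mathfrak{m}$ be the product of the largest powers of primes of $\mathscr C$ occurring in elements of $\Sigma$. Then ${\mathcal R}_\Sigma$ is a union of cosets of $\mathfrak m^k$ in $\mathcal O_K^k$.

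\textbf{Step 1: finitely many primes.} Fix $M$ and let $\mathscr{C}^c_M$ be the primes $\mathfrak p\in\mathscr C^c$ with ${\rm N}(\mathfrak p)\le M$. The set ${\mathcal R}_\Sigma\cap{\mathcal R}_{\mathscr C^c_M}$ is a union of cosets of $(\mathfrak m\prod_{\mathfrak p\in\mathscr C^c_M}\mathfrak p)^k$. Theorem \ref{prop:crucari}(2), applied coordinatewise to products of arithmetic progressions, shows that each such coset has density $1/{\rm N}(\cdot)^k$; since the counts factor, the density of a box $\prod_i(\mathfrak a+a_i)_B$ is the product. By the Chinese remainder theorem, and because the conditions at $\mathfrak m$ and at the primes of $\mathscr C^c_M$ are coprime, we get
\[
\mu({\mathcal R}_\Sigma\cap{\mathcal R}_{\mathscr C^c_M})=\mu({\mathcal R}_\Sigma)\prod_{\mathfrak p\in\mathscr C^c_M}\Bigl(1-\frac{c_{\mathfrak p}}{{\rm N}(\mathfrak p)^k}\Bigr).
\]
This also shows that $\mu({\mathcal R}_\Sigma)$ exists, by taking $M$ small.

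\textbf{Step 2: tail estimate.} It remains to show
\[
\lim_{M\to\infty}\limsup_{B\to\infty}\frac{\#\{\underline m\in\mathcal O_{K,B}^k : \mathfrak p\mid Q_i(\underline m)\ \forall i,\ \text{for some }\mathfrak p,\ {\rm N}(\mathfrak p)>M\}}{\#\mathcal O^k_{K,B}}=0.
\]
Once this holds, squeezing between the densities of ${\mathcal R}_\Sigma\cap{\mathcal R}_{\mathscr C^c_M}$ gives the result. Absolute convergence of the product follows from $c_{\mathfrak p}=O({\rm N}(\mathfrak p)^{k-2})$. Indeed, coprimality implies that $\{Q_i=0\}$ has codimension $\ge2$, and Lang--Weil type bounds (or an elementary fibration bound) then give this estimate for all but finitely many $\mathfrak p$. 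Since $\sum_{\mathfrak p}{\rm N}(\mathfrak p)^{-2}<\infty$, the product converges absolutely.

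To prove the tail estimate, follow Poonen. First reduce to $Q_1,Q_2$ coprime; any two coprime combinations suffice, since after a linear change of variables we may assume $Q_1,Q_2$ coprime and $Q_1$ monic-ish in $T_k$. Eliminating $T_k$ gives a resultant $R(T_1,\dots,T_{k-1})\neq0$ such that every common prime $\mathfrak p$ divides $R$. Split the primes into three ranges.

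(a) \emph{Medium primes}, $M<{\rm N}(\mathfrak p)\le\theta_\varepsilon(B)$. Count by residues: there are $c_{\mathfrak p}$ residue classes, each of size given by Theorem \ref{prop:crucari}(1). The main terms sum to $O(\sum_{{\rm N}(\mathfrak p)>M}{\rm N}(\mathfrak p)^{-2})$. For the error terms, note that the number of primes involved is polylogarithmic in $B$, so they are $o(\#\mathcal O^k_{K,B})$ once multiplied by $c_{\mathfrak p}$.

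(b) \emph{Large primes with $R(\underline m')=0$}, where $\underline m'=(m_1,\dots,m_{k-1})$. This set has density $0$ by a Schwartz--Zippel type count, which again uses Theorem \ref{prop:crucari} on one coordinate.

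(c) \emph{Large primes with $R(\underline m')\neq0$}. By Lemma \ref{lem:funfunc}, $R(\underline m')$ has at most $\gamma\log_q B/\log_q\theta_\varepsilon(B)$ prime divisors of norm $>\theta_\varepsilon(B)$. For each such $\mathfrak p$, the coordinate $m_k$ lies in at most $\deg Q_1$ classes modulo $\mathfrak p$ (in the non-degenerate case where $Q_1(\underline m',T_k)\not\equiv0\bmod\mathfrak p$). Each class contains at most $D_{\varepsilon,B}$ points by Corollary \ref{lem:boundinarithproBC}. Summing over $\underline m'$, the count is $\#\mathcal O_{K,B}^{k-1}\cdot O(\log_q B/\log_q\theta_\varepsilon)\cdot D_{\varepsilon,B}=o(\#\mathcal O^k_{K,B})$. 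This is exactly the shape of Corollary \ref{lem:boundinarithproBC}, which requires $\fd\ge2$; for $\fd=1$ the case is the known one for $\mathbb Z$ and $\F_q[t]$.

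\textbf{Main obstacle.} The delicate point is (c): handling the degenerate fibres where the leading coefficient of $Q_1$ in $T_k$ vanishes modulo $\mathfrak p$, and making the bound uniform. This is Poonen's lemma (Lemma \ref{lem:technical}). I would first make $Q_1$ monic in $T_k$ via a linear change of variables over $\mathcal O_K$ after scaling. If that fails, I would treat the leading coefficient as a further polynomial, vanishing on a density-zero set, and handle it with an induction on $k$.
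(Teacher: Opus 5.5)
Your architecture is the paper's, and most of it is sound. You use CRT for the finitely many primes of norm $\le M$ together with the coset structure of $\mathcal{R}_\Sigma$, then a squeeze by the tail set. The tail lemma is Poonen-type, split at $\theta_\varepsilon(B)$: medium primes are handled by Theorem \ref{prop:crucari} and $c_{\mathfrak p}=O({\rm N}(\mathfrak p)^{k-2})$, and large primes by Lemma \ref{lem:funfunc} and Corollary \ref{lem:boundinarithproBC} along fibres over $\underline m'$.

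The gap is exactly the step you flag as the main obstacle, the degenerate fibres, and neither suggestion closes it as written. The linear change of variables making $Q_1$ monic in $T_k$ is not available here. $\mathcal{O}_{K,B}^k$ is a product of Weil-height balls, and when $\#\mathcal{S}_\infty>1$ these are not stable under $GL_k(\mathcal{O}_K)$, even up to bounded distortion. The reason is that the height of a sum is only bounded by a constant times the product of the heights. For instance, in $\mathbb{Q}(\sqrt2)$, $(1+\sqrt2)^n$ and $(1-\sqrt2)^n$ both have height $(1+\sqrt2)^n$, while their sum, a rational integer, has height about $(1+\sqrt2)^{2n}$. A substitution $T_i\mapsto T_i+c_iT_k$ therefore destroys the product structure that both your fibration over $\underline m'$ and Corollary \ref{lem:boundinarithproBC} rely on, so upper densities do not transfer between the two coordinate systems.

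Your fallback targets the wrong set and the wrong family. What must be shown negligible is not where the leading coefficient vanishes. It is the set of $\underline m'$ for which some prime $\mathfrak p$ of norm $>\theta_\varepsilon(B)$ makes the fibre polynomial vanish identically mod $\mathfrak p$. That set is small only through the induction hypothesis, i.e. the tail statement in $k-1$ variables, and this requires a family coprime in $K[T_1,\dots,T_{k-1}]$. The pair (leading coefficient, $R$) need not be coprime. For $Q_1=T_1T_2+1$ and $Q_2=T_1T_2+T_1+1$ one gets $\mathrm{lc}_{T_2}(Q_1)=T_1$ and $R=T_1^2$. Over $\mathbb{Z}$, almost every $m_1$ has a prime factor exceeding $\theta_\varepsilon(B)$, so the induction cannot be invoked there — even though in this example no fibre is actually degenerate.

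The repair within your framework is as follows. Call a fibre degenerate only if both $Q_1(\underline m',T_k)$ and $Q_2(\underline m',T_k)$ are $\equiv 0 \pmod{\mathfrak p}$; otherwise bound the classes of $m_k$ using whichever one is nonzero. In the degenerate case $\mathfrak p$ divides every $T_k$-coefficient of $Q_1$ and $Q_2$ at $\underline m'$. This family is coprime in $K[T_1,\dots,T_{k-1}]$, since a common factor would divide both $Q_1$ and $Q_2$. So the induction hypothesis applies, and it lifts to $k$ variables because $\mathcal{O}_{K,B}^k=\mathcal{O}_{K,B}^{k-1}\times\mathcal{O}_{K,B}$.

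The paper reaches the same point by another route. It passes to irreducible $Q_i$, brings in an elimination polynomial in $T_1,\dots,T_{k-1}$, and reduces leading coefficients modulo it so that $Q_1^0,\dots,Q_{\ell-1}^0,Q_\ell$ are coprime. Its sets $\mathcal{S}_2$ (handled by induction) and $\mathcal{S}_3$ are then your degenerate and nondegenerate cases.
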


Theorem \ref{thm:ekpoongen} is the special case of Theorem \ref{thm:ekpoongen-2} for which $\Sigma = \mathscr{C} = \emptyset$. 
If in addition $c_{\mathfrak{p}} < {\rm N}(\mathfrak{p})^k$ for all {prime} ideals $\mathfrak{p}$ of ${\mathcal O}_K$, then ${\mathcal R}^\star$ is a subset of ${\mathcal R}$ of positive density.  The general formula from Theorem \ref{thm:ekpoongen-2} with $\Sigma$ and $\mathscr{C}$ possibly nonempty is used in the proof of Theorem~\ref{thm:denposHilIrrThm} (in \S \ref{ssec:int-hil-thm}): the set ${\mathcal R}$ there satisfies the weaker assumption that $c_{\mathfrak{p}} < {\rm N}(\mathfrak{p})^k$ for all \emph{principal} {prime} ideals $\mathfrak{p}$ of ${\mathcal O}_K$, and will still be shown to contain a subset of positive density.

\subsubsection{Density of $\mathcal{C}_{\mathfrak{p}}$ and $\mathcal{R}_{\mathfrak{p}}$}
\label{ssc:step1} The proof of Theorem \ref{thm:ekpoongen-2} starts with local estimates for $\Sigma = \{\mathfrak{p}\}$.

\begin{lem} \label{local-lemma} Let $\mathfrak{p} \in ({\rm Spec}\hskip 1pt \mathcal{O}_K)^\ast$.
\vskip 1mm

\noindent
{\rm (a)} \hskip 0mm The sets $\mathcal{C}_\mathfrak{p}$ and $\mathcal{R}_\mathfrak{p}$ have densities \hskip 1mm $\displaystyle \mu(\mathcal{C}_\mathfrak{p})= \frac{c_\mathfrak{p}}{{\rm N}(\mathfrak{p})^k} $\hskip 3mm and \hskip 3mm $\displaystyle \mu(\mathcal{R}_\mathfrak{p}) = 1- \frac{c_\mathfrak{p}}{{\rm N}(\mathfrak{p})^k},$
\vskip 1mm

\noindent
{\rm (b)} \hskip 0mm
If $N(\mathfrak{p})$ is of  sufficiently large norm, we have: $
c_\mathfrak{p} \le d^\ell \cdot {\rm N}(\mathfrak{p})^{k-2}
$, where $d=\max_{1\leq i\leq \ell} \deg(Q_i)$. 
\end{lem}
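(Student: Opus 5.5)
For part (a), I would reduce modulo $\mathfrak{p}$. Whether $\underline m\in\mathcal{C}_\mathfrak{p}$ depends only on the class of $\underline m$ in $(\mathcal{O}_K/\mathfrak{p})^k$. Concretely, $\mathcal{C}_\mathfrak{p}$ is the disjoint union, over the $c_\mathfrak{p}$ tuples $\underline u$ of common zeros, of the product sets $(\mathfrak{p}+a_1)\times\cdots\times(\mathfrak{p}+a_k)$, where $a_i$ lifts $u_i$. The counting function of a product set is the product of the counts in each coordinate:
\[
\#\bigl((\mathfrak{p}+a_1)\times\cdots\times(\mathfrak{p}+a_k)\bigr)_B=\prod_{i=1}^k\#(\mathfrak{p}+a_i)_B,
\qquad \#\mathcal{O}_{K,B}^k=(\#\mathcal{O}_{K,B})^k .
\]
Theorem \ref{prop:crucari}(2) gives $\#(\mathfrak{p}+a_i)_B/\#\mathcal{O}_{K,B}\to 1/{\rm N}(\mathfrak{p})$, so each product set has density ${\rm N}(\mathfrak{p})^{-k}$. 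Summing over the finitely many classes gives $\mu(\mathcal{C}_\mathfrak{p})=c_\mathfrak{p}/{\rm N}(\mathfrak{p})^k$. Since $\mathcal{R}_\mathfrak{p}$ is the complement of $\mathcal{C}_\mathfrak{p}$, it has density $1-c_\mathfrak{p}/{\rm N}(\mathfrak{p})^k$. This part is routine.

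For part (b), I would use the coprimality of $Q_1,\dots,Q_\ell$ in $K[\underline T]$ to get a variety of codimension at least $2$, and then apply a Lang--Weil/Schwartz--Zippel type point count over the residue field $\mathbb{F}_\mathfrak{p}=\mathcal{O}_K/\mathfrak{p}$, which has $N:={\rm N}(\mathfrak{p})$ elements.

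First I reduce to two polynomials. Since the $Q_i$ have no common factor, I would pick an $\mathcal{O}_K$-combination $Q_2'=\sum_{i\ge2}\lambda_iQ_i$ that is coprime to $Q_1$. For example $Q_2'=\sum_{i\geq 2}\lambda^{i}Q_i$ works for all but finitely many $\lambda\in\mathcal{O}_K$, because for each irreducible factor $F$ of $Q_1$ the combination is divisible by $F$ for only finitely many $\lambda$. The common zero set of the $Q_i$ is contained in that of $Q_1,Q_2'$, so it suffices to bound common zeros of two coprime polynomials of degree at most $d$.

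The main obstacle is that coprimality over $K$ must survive reduction modulo $\mathfrak{p}$ for all but finitely many $\mathfrak{p}$. I would handle this with resultants. After a linear change of variables over $\mathcal{O}_K$ that makes $Q_1$ monic in $T_k$ up to a constant (a generic change suffices, and finitely many primes may need to be excluded), the resultant $R=\mathrm{Res}_{T_k}(Q_1,Q_2')\in\mathcal{O}_K[T_1,\dots,T_{k-1}]$ is nonzero. For $\mathfrak{p}$ not dividing the leading coefficient, not dividing some nonzero coefficient of $R$, and not dividing the determinant of the change of variables, the reduction $\bar R$ is nonzero of degree at most $d^2$. These conditions exclude only finitely many primes, which is where "${\rm N}(\mathfrak{p})$ sufficiently large" enters.

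Then for each $(t_1,\dots,t_{k-1})\in\mathbb{F}_\mathfrak{p}^{k-1}$ with $\bar R(t_1,\dots,t_{k-1})\neq0$ there is no common zero. For each of the at most $d^2N^{k-2}$ tuples with $\bar R=0$ (Schwartz--Zippel), the monic $\bar Q_1$ has at most $d$ roots in $T_k$. This gives
\[
c_\mathfrak{p}\le d^3N^{k-2}.
\]
To match the stated constant $d^\ell$, I would either refine the count (bound the points on the zero locus of the two reduced polynomials by the product of degrees via a Bézout-type bound on $\mathbb{F}_\mathfrak{p}$-points of a variety of codimension at least $2$) or accept that the constant only matters up to a bound of the form $O(N^{k-2})$. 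The latter is what is used downstream.
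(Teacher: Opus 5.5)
Your part (a) is the paper's argument: a disjoint union over the $c_\mathfrak{p}$ residue classes of products of progressions $\mathfrak{p}+a_i$, followed by Theorem \ref{prop:crucari}(2).

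For part (b) you take a different and more elementary route. The paper uses three ingredients:
\begin{itemize}
\item \cite[Prop.~3.2]{BD23}, for coprimality of the reductions $\overline{Q_i}^{\mathfrak{p}}$ over $\overline{\mathbb{F}_q}$;
\item Krull's Hauptidealsatz, to get $\dim Z(\mathcal{I})\le k-2$;
\item the Lachaud--Rolland bound $c_\mathfrak{p}\le d^\ell q^{\dim Z(\mathcal{I})}$.
\end{itemize}
You instead reduce to two coprime polynomials $Q_1,Q_2'$, prove good reduction by hand with $R=\mathrm{Res}_{T_k}(Q_1,Q_2')$, and count with Schwartz--Zippel plus a fibre count. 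Your steps are sound. The choice of $\lambda$ works because $\lambda\mapsto\sum_{i\ge 2}\lambda^i\overline{Q_i}$ is a nonzero polynomial over the domain $K[\underline T]/(F)$. The constant leading coefficient of $Q_1$ in $T_k$ makes the resultant compatible with reduction mod $\mathfrak{p}$ and with specialization of $(T_1,\dots,T_{k-1})$. The argument yields $c_\mathfrak{p}\le d^3{\rm N}(\mathfrak{p})^{k-2}$. That is all the paper uses downstream: absolute convergence of the product, and the constant $C$ in Section \ref{sec:second-estimate}.

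As a proof of the statement as written, though, there is a gap. We have $d^3\le d^\ell$ only when $\ell\ge 3$ or $d\le 1$. So for $\ell=2$, $d\ge 2$ you do not get $c_\mathfrak{p}\le d^2{\rm N}(\mathfrak{p})^{k-2}$. The fibre-by-fibre count loses a factor $d$ by construction: $\deg\bar R\le d^2$, times up to $d$ roots per fibre. Nothing inside that scheme recovers it.

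What closes the gap is the refinement you only mention. You need a Bézout-type bound $\#Z(\mathbb{F}_q)\le d^{r}q^{\dim Z}$ for $Z$ cut out by $r$ equations of degree $\le d$ (Lachaud--Rolland, as the paper uses it). Your resultant argument already supplies the input this bound needs. For ${\rm N}(\mathfrak{p})$ large, $\bar Q_1$ and $\bar Q_2'$ have no common factor over $\overline{\mathbb{F}_\mathfrak{p}}$:
\begin{itemize}
\item a common factor of positive $T_k$-degree would force $\mathrm{Res}(\bar Q_1,\bar Q_2')$ to vanish, and this equals $\bar R\neq 0$ up to a unit;
\item a common factor free of $T_k$ must be constant, since $\bar Q_1$ has constant leading coefficient in $T_k$.
\end{itemize}
Hence $\dim Z(\bar Q_1,\bar Q_2')\le k-2$. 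Applying the bound to these two equations gives $c_\mathfrak{p}\le d^2{\rm N}(\mathfrak{p})^{k-2}\le d^\ell{\rm N}(\mathfrak{p})^{k-2}$.

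So your route is self-contained, with no appeal to \cite{BD23} or Krull, and it proves the weaker constant $d^3$. Recovering the stated constant for $\ell=2$ needs the same point-counting theorem the paper relies on.
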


Assertion (b) guarantees that the infinite product from Theorem \ref{thm:ekpoongen-2} is absolutely convergent.

\begin{proof} (a) 
Denote by $\overline{\mathcal{C} _{\mathfrak{p}}}$ the set of cosets of elements of $\mathcal{C} _{\mathfrak{p}}$ modulo the ideal $\mathfrak{p}$. Note that $\# \overline{\mathcal{C} _{\mathfrak{p}}}$ is the cardinality denoted by $c_{\mathfrak{p}}$ in Section \ref{sec:intro}. Let  $\widetilde{\mathcal C}_\mathfrak{p}$ be a system of representatives in $\mathcal{O}_K^k$ of $\overline{{\mathcal C}_\mathfrak{p}}$ under the reduction modulo $\mathfrak{p}$. By definition of ${\mathcal C}_\mathfrak{p}$, we have:
\begin{align}
\mathcal{C}_\mathfrak{p} \cap \mathcal{O}^k_{K,B}&=\left\lbrace \underline{m} \in \mathcal{O}^k_{K,B} \mid \underline{m} \equiv \underline{a} \hskip -2mm \pmod{\mathfrak{p}}\,\,\textrm{for some}\,\, \underline{a} \in \widetilde{C}_{\mathfrak{p}}\right\rbrace\\
&=\bigsqcup_{\underline{a} \in \widetilde{C}_\mathfrak{p}}\prod_{i=1}^k\left\lbrace m_i \in \mathcal{O}_K \mid m_i \equiv a_i \hskip -2mm \pmod{\mathfrak{p}}\,\,\textrm{and}\,\,{\rm H}_K(m_i) \leq B \right\rbrace \\
&= \bigsqcup_{\underline{a} \in \widetilde{C}_\mathfrak{p}} \prod_{i=1}^k \hskip 1mm (\mathfrak{p}+a_i)_B 
\end{align}
Conclusion (a) then follows from conjoining Definition \ref{defn:density} and Proposition~\ref{lem:bounheigharithprog}.

\vskip 2mm

\noindent
(b) For completeness, we include the argument already given in \cite[Corollary~6.1]{BD23}. 

Assume first $k=1$. By coprimality of $Q_1,\ldots, Q_\ell$ in $K[T]$, there exists a nonzero
$$
\delta \in \left(\sum_{i=1}^\ell\mathcal{O}_K[T] Q_i(T)\right) \cap \mathcal{O}_K.
$$ 

\noindent
This proves that $c_\mathfrak{p} = 0$ for all $\mathfrak{p} \in {\rm Spec}(\mathcal{O}_K)$ for which ${\rm N}(\mathfrak{p}) > |{\rm N}(\delta)|$.
\vskip 1mm

Assume now that $k\ge 2$. 
Let $\mathfrak{p} \in ({\rm Spec}\hskip 1pt \mathcal{O}_K)^\ast$. 
Set $q={\rm N}(\mathfrak{p})$.
Since $Q_1,\ldots, Q_\ell$ are coprime in $K[\ul T]$, it follows from \cite[Prop.~3.2]{BD23} that their reductions modulo $\mathfrak{p}$, namely $\overline{Q_1}^{\mathfrak{p}},\ldots, \overline{Q_\ell}^{\mathfrak{p}}$, are nonzero and remain coprime in $\mathbb{F}_q[\ul T]$ for all sufficiently large $q$. Consequently, they are also coprime in $\overline{\mathbb{F}_q}[\ul T]$ for the same primes $\mathfrak{p}$; see, for example, \cite[\S 2.1]{BDN20b}.

Fix such a prime ideal $\mathfrak{p}$, and consider the ideal $\mathcal{I} = \langle \overline{Q_1}^{\mathfrak{p}},\ldots, \overline{Q_\ell}^\mathfrak{p}\rangle \subset \overline{\mathbb{F}_q}[\ul T]$. We aim to apply the B\'ezout theorem, in the following form, due to Lachaud--Rolland \cite[Corollary~2.2]{LR15}: 
\vskip 1mm

\centerline{$c_\mathfrak{p} \leq d^\ell \cdot q^{\dim(Z(\mathcal{I}))}$,}
\vskip 1mm

\noindent
and so are reduced to show that $\dim(Z(\mathcal{I})) \leq k-2$. 

It is well-known that the dimension $\dim(Z(\mathcal{I}))$ coincides with the Krull dimension $\dim  \overline{\mathbb{F}_q}[\ul T]/\mathcal{I}$ of the quotient ring $\overline{\mathbb{F}_q}[\ul T]/\mathcal{I}$ (see \hbox{e.g.} \cite[Proposition~1.7]{Har77}). 
We may assume that $\deg \overline{Q_1}^{\mathfrak{p}} \geq 1$: otherwise $c_\mathfrak{p}=0$. Then $\overline{Q_1}^{\mathfrak{p}}$ has at least one irreducible factor $\Delta \in \overline{\mathbb{F}_q}[\ul T]$. 
We have:
\vskip 1mm
\centerline{$\dim {\overline{\mathbb{F}_q}[\ul T]/\mathcal{I}} \leq \dim{\overline{\mathbb{F}_q}[\ul T]/\langle \overline{Q_1}^{\mathfrak{p}}\rangle} = \dim{\overline{\mathbb{F}_q}[\ul T]/\langle \Delta\rangle} = k-1$.}

\vskip 2mm

Assume on the contrary that $\dim {\overline{\mathbb{F}_q}[\ul T]/\mathcal{I}} = k-1$. Thus there exists a minimal prime ideal ${\mathfrak{q}} \subset \overline{\mathbb{F}_q}[\ul T]$ containing $\mathcal{I}$ such that $\dim {\overline{\mathbb{F}_q}[\ul T]/{\mathfrak{q}}} = k-1$, or, equivalently ${\mathfrak{q}}$ is of height $1$. 
By Krull's Hauptidealsatz \cite[Theorem~1.11 A \& Proposition~1.13]{Har77}, the variety $Z({\mathfrak{q}})$ is a hypersurface $Z(f)$, for some irreducible polynomial $f\in \overline{\mathbb{F}_q}[\ul T]$. But then, since $ \langle f \rangle = {\mathfrak{q}} \supset \mathcal{I}$, it follows that $f$ divides each polynomial $\overline{Q_i}^\mathfrak{p}$ in $\overline{\mathbb{F}_q}[\ul T]$, ($1 \leq i \leq \ell$), a contradiction. We conclude that $\dim(Z(\mathcal{I}))=\dim {\overline{\mathbb{F}_q}[\ul T]/\mathcal{I}} \leq k-2$.
\end{proof}

\subsubsection{A lemma of Poonen} \label{ssec:conditional}
For  $M \geq 1$, define 

$$\Gamma_{\leq M}=\{\mathfrak{p} \in ({\rm Spec}\hskip 1pt \mathcal{O}_K)^\ast \mid {\rm N}(\mathfrak{p}) \leq M\}$$

\vskip 1,5mm

Let $\mathscr{C} \subset ({\rm Spec}\hskip 1pt \mathcal{O}_K)^\ast$ be a finite set. Define $\mathcal{C}_{> M}$ to be the set $\mathcal{C}_{{\mathscr{C}}^c \setminus \Gamma_{\leq M}}$, that is:

\vskip 1mm

\centerline{%
\(\displaystyle
\mathcal{C}_{> M} = \big\{ \underline{m} \in \mathcal{O}_K^k \mid Q_1(\underline{m}) \equiv \dots \equiv Q_s(\underline{m}) \equiv 0 \hskip -1mm \pmod{\mathfrak{a}} \text{ for some ideal } \mathfrak{a} \notin (\Gamma_{\leq M} \cup \mathscr{C}) \big\}
\).
}
\vskip 1.5mm

\begin{lem} \label{lem:technical}
We have: \hskip 5mm $\displaystyle \lim_{M \rightarrow \infty} \mu_{{\rm sup}}(\mathcal{C}_{>M})=0$.
\end{lem}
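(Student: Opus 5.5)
Note first that if some ideal $\mathfrak a$ divides all the $Q_i(\underline m)$, then so does every prime ideal dividing $\mathfrak a$. Such an $\mathfrak a\notin \Gamma_{\leq M}\cup\mathscr{C}$ (read as a condition on its prime factors) forces some prime $\mathfrak p\in \mathscr{C}^c\setminus\Gamma_{\leq M}$ with $\mathfrak p \mid Q_i(\underline m)$ for all $i$. So it suffices to bound the upper density of
\[
\bigcup_{\mathfrak p,\ {\rm N}(\mathfrak p)>M}\mathcal{C}_{\mathfrak p}.
\]
I will follow Poonen's method for $\mathbb Z$, using Theorem \ref{prop:crucari} and Corollary \ref{lem:boundinarithproBC} in place of elementary counting. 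Following Poonen, I first reduce to the case where $Q_1,Q_2$ are coprime in $K[\underline T]$ (replacing $Q_1$ by a suitable linear combination if needed), with $Q_1$ irreducible, or treat each irreducible factor of $Q_1$ separately. I also induct on $k$: the case $k=1$ is immediate, since by Lemma \ref{local-lemma}(b) $c_{\mathfrak p}=0$ for ${\rm N}(\mathfrak p)$ large.

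The primes are split into three ranges, with $\theta=\theta_\varepsilon(B)$ as in Corollary \ref{lem:boundinarithproBC}.

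\textbf{Medium primes} $M<{\rm N}(\mathfrak p)\le \theta$. By Theorem \ref{prop:crucari}(1),
\[
\#(\mathcal C_{\mathfrak p})_B \le c_{\mathfrak p}\bigl(\#\mathcal O_{K,B}/{\rm N}(\mathfrak p)+O(E)\bigr)^k .
\]
The error terms are $o(\#\mathcal O_{K,B})$ uniformly, since ${\rm N}(\mathfrak p)\le\theta$ is only polylogarithmic in $B$. Summing with Lemma \ref{local-lemma}(b) gives a contribution $\ll \sum_{{\rm N}\mathfrak p>M}{\rm N}(\mathfrak p)^{-2}+o(1)$, which tends to $0$ as $M\to\infty$.

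\textbf{Large primes, fibered count.} Write $Q_1,Q_2$ as polynomials in $T_k$ over $\mathcal O_K[T_1,\dots,T_{k-1}]$, and let $R\in\mathcal O_K[T_1,\dots,T_{k-1}]$ be their resultant in $T_k$ (after a generic change of variables making the leading coefficients harmless). Then $R\ne 0$ by coprimality. For each $(m_1,\dots,m_{k-1})\in\mathcal O_{K,B}^{k-1}$ with $R(m_1,\dots,m_{k-1})\neq 0$, a prime $\mathfrak p$ dividing both $Q_1(\underline m)$ and $Q_2(\underline m)$ must divide $R(m_1,\dots,m_{k-1})$. By Lemma \ref{lem:funfunc} with $A=\theta$, there are at most $\gamma\log_q B/\log_q\theta$ such primes with ${\rm N}(\mathfrak p)>\theta$. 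For each such prime, $m_k$ lies in at most $\deg Q_1$ residue classes modulo $\mathfrak p$. Each class contributes at most $D_{\varepsilon,B}$ values by Corollary \ref{lem:boundinarithproBC}. So this piece is
\[
\le \#\mathcal O_{K,B}^{k-1}\cdot \deg Q_1\cdot\frac{\gamma\log_q B}{\log_q\theta}\,D_{\varepsilon,B}=o(\#\mathcal O_{K,B}^k),
\]
which is exactly the normalization built into Corollary \ref{lem:boundinarithproBC}. This is where the restriction $\fd\ge 2$ matters; for $\fd=1$ the classical counting is used. The tuples with $R(m_1,\dots,m_{k-1})=0$ lie on a proper subvariety, and their density is $0$ by a standard Schwartz–Zippel-type bound derived from Theorem \ref{prop:crucari}.

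The \textbf{main obstacle} is the case where $Q_1(\underline m)$ itself vanishes, or where leading coefficients in $T_k$ vanish mod $\mathfrak p$. In that case the "at most $\deg Q_1$ classes" argument breaks down, and I would handle it by the induction on $k$ applied to the coefficient polynomials.

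A further delicate point is checking uniformity in the residue class, so that summing over $\mathfrak p$ is legitimate. Theorem \ref{prop:crucari}'s error is uniform in $a$, and in the function field case the extra factor $\deg(\mathfrak p)$ is absorbed by the choice $\varepsilon<1/\fd$. Letting $B\to\infty$ and then $M\to\infty$ yields $\lim_M\mu_{\rm sup}(\mathcal C_{>M})=0$.
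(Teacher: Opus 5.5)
Your route is the paper's own proof: Poonen's method, split at $\theta_\varepsilon(B)$. Medium primes are bounded via Theorem~\ref{prop:crucari} and $c_{\mathfrak p}\le d^\ell{\rm N}(\mathfrak p)^{k-2}$. Large primes are handled with an elimination polynomial in $T_1,\dots,T_{k-1}$, Lemma~\ref{lem:funfunc} with $A=\theta$, boundedly many residue classes for $m_k$, Corollary~\ref{lem:boundinarithproBC}, Schwartz--Zippel for the vanishing locus, and induction on $k$ for degenerate fibres. However, one step as you wrote it would fail, and the induction has to be set up more carefully.

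The ``generic change of variables making the leading coefficients harmless'' is not available here. The boxes $\mathcal O_{K,B}^k$ are defined by Weil heights, and non-diagonal linear maps do not send them into boxes of comparable size. For instance, in $\mathbb Q(\sqrt2)$ with $\epsilon=1+\sqrt2$, both $\epsilon^n$ and $\epsilon^{-n}$ have height $\epsilon^n$, while $\epsilon^n+\epsilon^{-n}$ has height about $\epsilon^{2n}$. So upper densities cannot be transported between the two coordinate systems, and the degenerate fibres really must be handled by the induction.

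The induction hypothesis only applies to a family that is coprime in $K[T_1,\dots,T_{k-1}]$. The event that breaks the root count is that $Q_1(m_1,\dots,m_{k-1},T_k)$ vanishes identically modulo $\mathfrak p$, not merely that its leading coefficient does. So the right family is $R$ together with all $T_k$-coefficients of the irreducible $Q_1$; it is coprime because $Q_1$ is primitive in $T_k$. Pairing $R$ with the leading coefficient $h$ alone does not work: for $Q_1=T_1T_2+1$, $Q_2=T_1T_2+2$ one gets $h=T_1$ and $R=\pm T_1$, so the induction hypothesis cannot be invoked. The paper instead reduces to $Q_\ell\in\mathcal O_K[T_1,\dots,T_{k-1}]$ irreducible with $Q_\ell\nmid Q_i^0$, which makes $\{Q_1^0,\dots,Q_{\ell-1}^0,Q_\ell\}$ coprime. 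Your all-coefficients version, once stated correctly, is an equally valid and slightly simpler device.

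In the medium range, knowing the error terms are $o(\#\mathcal O_{K,B})$ uniformly does not by itself make their total contribution $o(1)$. The difficulty is the weights $c_{\mathfrak p}\le d^\ell{\rm N}(\mathfrak p)^{k-2}$ summed over all primes of norm up to $\theta$. What you need, and what the paper checks, is $E_{\cS_\infty}(B,\mathfrak p)\ll \#\mathcal O_{K,B}/{\rm N}(\mathfrak p)$ whenever ${\rm N}(\mathfrak p)\le\theta_\varepsilon(B)$. This uses $\varepsilon<1/\fd$ again (in the number field case it amounts to ${\rm N}(\mathfrak p)^{1-1/\fd}\le\log B$). It then gives the bound $\ll\sum_{{\rm N}(\mathfrak p)>M}{\rm N}(\mathfrak p)^{-2}$ directly, with no extra $o(1)$ term.
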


\begin{proof} The proof is similar to that of \cite[Lemma~5.1]{Poo03} and is postponed to \S \ref{sec:second-estimate}.\end{proof}

\vskip 2mm

\subsubsection{A technical intermediate stage} For $M \geq 1$, define $\mathcal{R}_{\le M}$ to be the set ${\mathcal R}_{\Gamma_{\leq M}\setminus \mathscr{C}}$, that is:

 $$\mathcal{R}_{\le M} = \big\{ \ul m \in \mathcal{O}_K^k \mid \textit{ no ideal } \mathfrak{p} \in \Gamma_{\leq M}\setminus \mathscr{C}\,\,\textit{divides all of}\,\, Q_1(\ul m),\ldots,Q_\ell(\ul m)  \big\}.$$
\vskip 1mm

 The following result is a technical intermediate stage towards the proof of Theorem \ref{thm:ekpoongen-2}.

\begin{prop} \label{main-lem}
 Let $\underline{m}_0 \in \mathcal{O}_K^k$ and let $\Psi\subset {\mathcal O}_K$ be a nonzero ideal supported on $\mathscr{C}$. Then the sets 
$\mathcal{R}_{\leq M} \cap(\underline{m}_0 + \Psi^k)$ and ${\mathcal R}_{{\mathscr{C}}^c} \cap (\underline{m}_0+\Psi^k)$ have a density and\vskip 1,5mm
 
\noindent
{\rm (a)} \hskip 5mm $\displaystyle \mu(\mathcal{R}_{\leq M} \cap(\underline{m}_0 + \Psi^k)) =\frac{1}{{\rm N}(\Psi)^k}\prod_{\mathfrak{p} \in \Gamma_{\leq M}\setminus \mathscr{C}} \mu(\mathcal{R}_{\mathfrak{p}})$.
 
\vskip 1mm
 
\noindent
{\rm (b)} \hskip 5mm 
 $\displaystyle \mu({\mathcal R}_{{\mathscr{C}}^c} \cap (\underline{m}_0+\Psi^k))=\lim_{M \rightarrow \infty}\mu(\mathcal{R}_{\le M} \cap (\underline{m}_0+\Psi^k)).$

\end{prop}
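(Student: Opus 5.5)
For (a), I would reduce to counting residue classes modulo a single ideal. Set $\mathfrak{a}=\prod_{\mathfrak{p}\in\Gamma_{\le M}\setminus\mathscr{C}}\mathfrak{p}$. Since $\Psi$ is supported on $\mathscr{C}$, the ideals $\mathfrak{a}$ and $\Psi$ are coprime, and the Chinese remainder theorem gives
\[
\mathcal{O}_K/(\Psi\mathfrak{a})\cong \mathcal{O}_K/\Psi\times\prod_{\mathfrak{p}}\mathcal{O}_K/\mathfrak{p}.
\]
Membership of $\underline m$ in $\mathcal{R}_{\le M}\cap(\underline m_0+\Psi^k)$ depends only on $\underline m$ modulo $\Psi\mathfrak{a}$. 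The condition imposes $\underline m\equiv\underline m_0 \pmod{\Psi}$, and modulo each $\mathfrak{p}$ it requires $\underline m$ to avoid the $c_{\mathfrak{p}}$ common zeros. Hence the set is a disjoint union of exactly $\prod_{\mathfrak{p}}({\rm N}(\mathfrak{p})^k-c_{\mathfrak{p}})$ cosets of $(\Psi\mathfrak{a})^k$.

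Each such coset is a product $\prod_{i=1}^k(\Psi\mathfrak{a}+b_i)$. Its $B$-truncation is the product of the $(\Psi\mathfrak{a}+b_i)_B$, so by Theorem \ref{prop:crucari}(2) (applied coordinatewise, as in Lemma \ref{local-lemma}(a)) it has density ${\rm N}(\Psi\mathfrak{a})^{-k}$. Summing finitely many such densities and using multiplicativity of the norm together with Lemma \ref{local-lemma}(a) yields the formula in (a). The same argument shows that $\underline m_0+\Psi^k$ itself has density ${\rm N}(\Psi)^{-k}$.

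For (b), I would sandwich the set. On one side, ${\mathcal R}_{\mathscr{C}^c}\subset\mathcal{R}_{\le M}$, which gives $\mu_{\sup}({\mathcal R}_{\mathscr{C}^c}\cap(\underline m_0+\Psi^k))\le \mu(\mathcal{R}_{\le M}\cap(\underline m_0+\Psi^k))$. On the other side, if $\underline m\in\mathcal{R}_{\le M}\setminus{\mathcal R}_{\mathscr{C}^c}$, then some prime $\mathfrak{p}\notin\mathscr{C}$ with ${\rm N}(\mathfrak{p})>M$ divides all the $Q_i(\underline m)$, so $\underline m\in\mathcal{C}_{>M}$. This gives
\[
\mu_{\inf}({\mathcal R}_{\mathscr{C}^c}\cap(\underline m_0+\Psi^k))\ \ge\ \mu(\mathcal{R}_{\le M}\cap(\underline m_0+\Psi^k))-\mu_{\sup}(\mathcal{C}_{>M}).
\]

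Next, the sequence $\mu(\mathcal{R}_{\le M}\cap(\underline m_0+\Psi^k))$ is nonincreasing in $M$ and bounded below by $0$, so it converges. Its limit is ${\rm N}(\Psi)^{-k}$ times the product over $\mathscr{C}^c$, which converges absolutely by Lemma \ref{local-lemma}(b). Letting $M\to\infty$ and invoking Lemma \ref{lem:technical}, the upper and lower densities coincide with this limit. This proves both existence of the density and (b).

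The main obstacle is Lemma \ref{lem:technical}, which controls large primes uniformly in $B$; the paper defers it to \S \ref{sec:second-estimate}. Granting that lemma, the only delicate point is checking that the density of a coset of $(\Psi\mathfrak{a})^k$ really factors coordinatewise. This holds because the truncation $\mathcal{O}^k_{K,B}$ is a product box and Theorem \ref{prop:crucari}(2) holds for every arithmetic progression.
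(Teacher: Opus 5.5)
Your proposal is correct and follows essentially the paper's proof. Part (a) comes from the Chinese remainder theorem modulo $\Psi\prod_{\mathfrak{p}\in\Gamma_{\le M}\setminus\mathscr{C}}\mathfrak{p}$ together with Theorem~\ref{prop:crucari}(2) applied coordinatewise, and part (b) from the sandwich given by $\mathcal{R}_{\mathscr{C}^c}\subset\mathcal{R}_{\le M}$, $\mathcal{R}_{\le M}\setminus\mathcal{R}_{\mathscr{C}^c}\subset\mathcal{C}_{>M}$ and Lemma~\ref{lem:technical}; your monotonicity argument for the existence of the limit is harmless but redundant, since the two-sided sandwich already forces $\mu(\mathcal{R}_{\le M}\cap(\underline{m}_0+\Psi^k))$ to converge to both the upper and the lower density.
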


\begin{proof}
(a) Set $\Gamma_{\leq M}\setminus \mathscr{C} = \{\mathfrak{p}_1,\ldots,\mathfrak{p}_e\}$.
By the Chinese Remainder Theorem, modulo $\theta=\mathfrak{p}_1 \cdots \mathfrak{p}_e \Psi$, the cardinality of $\mathcal{R}_{\leq M} \cap (\underline{m}_0 + \Psi^k)$ is equal to $({\rm N}(\mathfrak{p}_1)^k-c_{\mathfrak{p}_1})\cdots ({\rm N}(\mathfrak{p}_e)^k-c_{\mathfrak{p}_e})$. By Proposition~\ref{lem:bounheigharithprog} and Lemma \ref{local-lemma}, we have

\begin{align*}\label{eq:muofRleqM}
\mu(\mathcal{R}_{\leq M} \cap(\underline{m}_0 + \Psi^k))&=\frac{({\rm N}(\mathfrak{p}_1)^k-c_{\mathfrak{p}_1})\cdots ({\rm N}(\mathfrak{p}_e)^k-c_{\mathfrak{p}_e})}{{\rm N}(\mathfrak{p}_1 \cdots \mathfrak{p}_e \Psi)^k}\\
&=\frac{1}{{\rm N}(\Psi)^k}\prod_{j=1}^e \left( 1 -\frac{c_{\mathfrak{p}_j}}{{\rm N}(\mathfrak{p}_j)^k} \right)=\frac{1}{{\rm N}(\Psi)^k}\prod_{j=1}^e \mu(\mathcal{R}_{\mathfrak{p}_j}).
\end{align*}

\vskip 2mm

\noindent 
(b) Note first that, as ${\Gamma_{\leq M}\setminus \mathscr{C}}\subset \mathscr{C}^c$, we have ${\mathcal R}_{{\mathscr{C}}^c} \subset \mathcal{R}_{\le M}$. We then have $\mathcal{R}_{\le M} \setminus {\mathcal R}_{{\mathscr{C}}^c}
 \subset \mathcal{C}_{> M}$: indeed, elements $\underline m$ of the set $\mathcal{R}_{\le M} \setminus {\mathcal R}_{{\mathscr{C}}^c}$, not being in ${\mathcal R}_{{\mathscr{C}}^c}$ have the property that there exists a prime ideal $\mathfrak{p} \in \mathscr{C}^c$ that divides all the $Q_i(\ul m)$; but as these elements $\underline m$ are in $\mathcal{R}_{\le M}$, the ideal $\mathfrak{p}$ must be of norm ${\rm N}(\frak p) > M$; hence $\underline m \in  \mathcal{C}_{> M}$. Thus,
$$\mathcal{R}_{\le M} 
= {\mathcal R}_{{\mathscr{C}}^c} \cup (\mathcal{R}_{\le M} \setminus {\mathcal R}_{{\mathscr{C}}^c})
\subset {\mathcal R}_{{\mathscr{C}}^c} \cup \mathcal{C}_{> M}.
$$ This inclusion implies that
$$\mu_{{\rm sup}}({\mathcal R}_{{\mathscr{C}}^c} \cap (\underline{m}_0+\Psi^k)) \le \mu(\mathcal{R}_{\le M} \cap (\underline{m}_0+\Psi^k) ) \le \mu_{{\rm sup}}({\mathcal R}_{{\mathscr{C}}^c} \cap (\underline{m}_0+\Psi^k)) + \mu_{{\rm sup}}(\mathcal{C}_{> M} \cap (\underline{m}_0+\Psi^k)),$$ and
$$\mu_{{\rm inf}}({\mathcal R}_{{\mathscr{C}}^c} \cap (\underline{m}_0+\Psi^k)) \le \mu(\mathcal{R}_{\le M} \cap (\underline{m}_0+\Psi^k) ) \le \mu_{{\rm inf}}({\mathcal R}_{{\mathscr{C}}^c} \cap (\underline{m}_0+\Psi^k)) + \mu_{{\rm sup}}(\mathcal{C}_{> M} \cap (\underline{m}_0+\Psi^k)).$$
By Lemma \ref{lem:technical}, we know that $\lim_{M \rightarrow \infty }\mu_{{\rm sup}}(\mathcal{C}_{>M})= 0$, so
$$ \mu_{{\rm inf}}({\mathcal R}_{{\mathscr{C}}^c} \cap (\underline{m}_0+\Psi^k))=\lim_{M \rightarrow \infty}\mu(\mathcal{R}_{\le M} \cap (\underline{m}_0+\Psi^k)) = \mu_{{\rm sup}}({\mathcal R}_{{\mathscr{C}}^c} \cap (\underline{m}_0+\Psi^k)),
$$
hence
$$
\lim_{M \rightarrow \infty}\mu(\mathcal{R}_{\le M} \cap (\underline{m}_0+\Psi^k)) = \mu({\mathcal R}_{{\mathscr{C}}^c} \cap (\underline{m}_0+\Psi^k)).
$$
\end{proof}

\subsubsection{End of proof of Theorem \ref{thm:ekpoongen-2}}
Let $\Sigma=\{\mathfrak{a}_1,\dots,\mathfrak{a}_\sigma\}$ ($\sigma \geq 0$) be a finite set of nonzero ideals of $\mathcal{O}_K$ supported on $\mathscr{C}$. Let $\Psi$ be the least common multiple of the ideals $\mathfrak{a}_1,\dots,\mathfrak{a}_\sigma$. By definition, the support of $\Psi$ lies in $\mathscr{C}$. 

Writing conclusion (a) of Proposition \ref{main-lem} first for this ideal $\Psi$ and an arbitrary $\underline m_0\in {\mathcal O}_K^k$ 
and then in the special case that ${\underline m}_0=\underline 0$ and $\Psi=\mathcal{O}_K$,
we obtain:

$$
\mu(\mathcal{R}_{\le M} \cap (\underline{m}_0+\Psi^k))= \frac{1}{{\rm N}  (\Psi)^k} \hskip 1mm \mu(\mathcal{R}_{\le M}).
$$

\noindent
Conclusion (b) in the same two cases then yields:

$$
\mu({\mathcal R}_{{\mathscr{C}}^c} \cap (\underline{m}_0+\Psi^k))= \frac{1}{{\rm N}(\Psi)^k}\hskip 1mm\mu({\mathcal R}_{{\mathscr{C}}^c}).
$$

Observe next that ${\mathcal R}_{\Sigma}$ is invariant under the translation by $\Psi^k$. Hence there exist finitely many $\underline{m}_1,\dots,\underline{m}_v \in \mathcal{O}_K^k$ such that
$$
{\mathcal R}_{\Sigma}=\bigcup_{i=1}^v \{\underline{m}_i+\Psi^k\}.
$$ 

\noindent
We obtain that the sets ${\mathcal R}_{\Sigma}$ and ${\mathcal R}_{\Sigma} \cap {\mathcal R}_{{\mathscr{C}}^c}$ have a density and that
$$
\mu({{\mathcal R}_{{\mathscr{C}}^c} \cap \mathcal R}_{\Sigma})=\sum_{i=1}^v \mu({\mathcal R}_{{\mathscr{C}}^c} \cap (\underline{m}_i+\Psi^k))=\sum_{i=1}^v \frac{1}{{\rm N}(\Psi)^k}\mu({\mathcal R}_{{\mathscr{C}}^c})=\frac{v}{{\rm N}(\Psi)^k} \cdot \mu({\mathcal R}_{{\mathscr{C}}^c})=\mu({\mathcal R}_{\Sigma})\cdot \mu({\mathcal R}_{{\mathscr{C}}^c}),
$$ 

\noindent
the equality ${v}/{{\rm N}(\Psi)^k} =\mu({\mathcal R}_{\Sigma})$ coming from Proposition \ref{prop:crucari}. 

\noindent
Finally re-use Proposition \ref{main-lem} part (b) followed by part (a), for ${\underline m}_0=\underline 0$ and $\Psi=\mathcal{O}_K$, and finally conjoin it with Lemma \ref{local-lemma} to express the last term $\mu({\mathcal R}_{{\mathscr{C}}^c})$ above as follows:

$$
\mu({\mathcal R}_{{\mathscr{C}}^c})=\lim_{M \rightarrow \infty}\mu(\mathcal{R}_{\le M}) = \prod_{\mathfrak{p} \in {\mathscr{C}}^c}\mu(\mathcal{R}_{\mathfrak{p}}) = \prod_{\mathfrak{p} \in {\mathscr{C}}^c}
\big(1- \frac{c_\mathfrak{p}}{{\rm N}(\mathfrak{p})^k}\big).
$$

\subsection{Proof of Theorem~\ref{thm:denposHilIrrThm}} \label{ssec:int-hil-thm} 
Let $P_1,\ldots, P_s$ be polynomials as in the statement of Theorem \ref{thm:denposHilIrrThm}. Let $P= P_1\cdots P_s\in {\mathcal O}_K[\underline T,\underline Y]$ be the product of all of them.
Let $\underline{Q}=\{Q_{1},.\dots,Q_{\ell}\} \subset \mathcal{O}_K[\underline{T}]$ be the set of coefficients of $P$, viewed as a polynomial in $\underline{Y}$. Since for each $i=1,\ldots, s$, the polynomial $P_i$ is irreducible in $K[\underline T,\underline Y]$, the coefficients of $P_i$, as a polynomial in $\underline Y$, are coprime in $K[\underline T]$. By Gauss's lemma, so are the coefficients of $P$, \hbox{i.e.} the elements of $\underline Q$.
Consider the set ${\mathcal R} \subset {\mathcal O}_K^k$ attached to the set $\underline Q$.

The special case $\ell = 1$ should be dealt with separately. In this case, the product $P = P_1\cdots P_s$ is a monomial in $\underline Y$ with coefficient in ${\mathcal O}_K[\underline T]$. Then so are the polynomials $P_1,\ldots, P_s$, with the additional condition that the coefficients should be units in ${\mathcal O}_K$, due to the assumptions that $P_1,\ldots, P_s$ are irreducible $K[\underline T,\underline Y]$ and that $P$ has no fixed divisor \hbox{w.r.t.} $\underline T$. The conclusion of Theorem~\ref{thm:denposHilIrrThm} then easily follows.

In the rest of the proof, we assume that $\ell \geq 2$.
As in \cite[\S 2.1.2]{BD25}, denote the set of all ideals of ${\mathcal O}_K$ generated by irreducible elements by $\irr$. Let $\mathscr{C}$ be the set of all \emph{fixed prime ideal divisors of ${P}$ w.r.t. $\underline{T}$}, \hbox{i.e.}, of all ${\frak p}\in ({\rm Spec}\hskip 1pt \mathcal{O}_K)^\ast$ such that $P(\underline t,\underline Y)\equiv 0 \pmod{\frak p}$\footnote{The polynomial $P$ is assumed to have no fixed prime divisors (in ${\mathcal O}_K$) \hbox{w.r.t.} $\underline{T}$ but may have some fixed prime ideal divisor.}. From \cite[Lemma 3.8 (1)]{BD25} (conjoined with Example 3.4 there), the set $\mathscr{C}$ is finite. Let $\Sigma$ be the set of all ideals in $\irr$ whose support lies in $\mathscr{C}$. From \cite[Lemma 3.9 (1)]{BD25}, the set $\Sigma$ is finite: set $\Sigma = \{\mathfrak{a}_1,\dots,\mathfrak{a}_\sigma\}$.

\begin{lem}\label{lem:incluRR}
With notation from \S \ref{ssec:thm31-statement} and under the assumptions of Theorem~\ref{thm:denposHilIrrThm}, we have:
\vskip 1mm

\noindent
{\rm (a)} ${\mathcal R}_{\Sigma} \cap {\mathcal R}_{{\mathscr{C}}^c} \subset \mathcal{R}
$
\vskip 1mm

\noindent
{\rm (b)} $\mu({\mathcal R}_\Sigma) >0$,
\vskip 1mm

\noindent
{\rm (c)} the set ${\mathcal R}$ contains a subset of positive density.

\end{lem}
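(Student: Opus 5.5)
The plan is to prove the three assertions in order: (a) by a direct divisibility argument, (b) by exhibiting one good residue class, and (c) by combining (a), (b) and Theorem~\ref{thm:ekpoongen-2}.

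\emph{(a).} Take $\underline m\in{\mathcal R}_\Sigma\cap{\mathcal R}_{\mathscr C^c}$ and suppose some nonunit $a\in\mathcal O_K$ divides all $Q_i(\underline m)$; the aim is a contradiction.
\begin{itemize}
\item Since $\mathcal O_K$ is Noetherian, hence atomic, $a$ has an irreducible factor $\pi$.
\item Then $(\pi)\in\irr$ and $\pi$ divides every $Q_i(\underline m)$.
\item If some prime ideal $\mathfrak p$ in the factorization of $(\pi)$ lies in $\mathscr C^c$, then $\mathfrak p$ divides all $Q_i(\underline m)$. This contradicts $\underline m\in{\mathcal R}_{\mathscr C^c}$.
\item Otherwise $(\pi)$ is supported on $\mathscr C$, so $(\pi)\in\Sigma$. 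This contradicts $\underline m\in{\mathcal R}_\Sigma$.
\end{itemize}
Hence $\underline m\in\mathcal R$.

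\emph{(b).} Let $\Psi$ be the lcm of $\mathfrak a_1,\dots,\mathfrak a_\sigma$. As in the end of the proof of Theorem~\ref{thm:ekpoongen-2}, ${\mathcal R}_\Sigma$ is a union of $v$ cosets of $\Psi^k$, and Theorem~\ref{prop:crucari} gives $\mu({\mathcal R}_\Sigma)=v/{\rm N}(\Psi)^k$. So it suffices to find a single $\underline t\in{\mathcal R}_\Sigma$. This is where the hypothesis that $P$ has no fixed divisor must be used, and I expect it to be the main point.

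For $\mathfrak p\in\mathscr C$ put $e_{\mathfrak p}(\underline t)=\min_i \ord_{\mathfrak p}(Q_i(\underline t))$. This is finite for suitable $\underline t$, since the $Q_i$ are not all zero polynomials. Also $e_{\mathfrak p}(\underline t)$ depends only on $\underline t$ modulo a high power $\mathfrak p^N$.
\begin{enumerate}
\item For each $\mathfrak p\in\mathscr C$, choose $\underline t_{\mathfrak p}$ realizing the minimal value $e_{\mathfrak p}^{\min}=\min_{\underline t}e_{\mathfrak p}(\underline t)$.
\item Since $\mathscr C$ is finite, the Chinese Remainder Theorem gives one $\underline t$ with $e_{\mathfrak p}(\underline t)=e_{\mathfrak p}^{\min}$ for all $\mathfrak p\in\mathscr C$ simultaneously.
\item Suppose $\mathfrak a_j=(\pi_j)$ divides all $Q_i(\underline t)$. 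Because $\mathfrak a_j$ is supported on $\mathscr C$, this means $\ord_{\mathfrak p}(\mathfrak a_j)\le e_{\mathfrak p}^{\min}$ for every $\mathfrak p\in\mathscr C$.
\item That inequality then gives $\ord_{\mathfrak p}(\mathfrak a_j)\le e_{\mathfrak p}(\underline t')$ for every $\underline t'\in\mathcal O_K^k$. Hence $\pi_j$ divides $P(\underline t',\underline Y)$ for all $\underline t'$, so $\pi_j$ is a fixed divisor of $P$ w.r.t.\ $\underline T$, a contradiction.
\end{enumerate}
Thus $\underline t\in{\mathcal R}_\Sigma$, so $v\ge1$ and $\mu({\mathcal R}_\Sigma)>0$.

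\emph{(c).} By (a), it suffices that ${\mathcal R}_\Sigma\cap{\mathcal R}_{\mathscr C^c}$ has positive density. Theorem~\ref{thm:ekpoongen-2} expresses this density as $\mu({\mathcal R}_\Sigma)\prod_{\mathfrak p\in\mathscr C^c}(1-c_{\mathfrak p}/{\rm N}(\mathfrak p)^k)$, with the product absolutely convergent. Its value is therefore positive as soon as no factor vanishes.
\begin{itemize}
\item For $\mathfrak p\notin\mathscr C$, $\mathfrak p$ is not a fixed prime ideal divisor of $P$. So some $\underline t$ has $P(\underline t,\underline Y)\not\equiv0\pmod{\mathfrak p}$, i.e.\ some $Q_i(\underline t)\notin\mathfrak p$.
\item Hence $c_{\mathfrak p}<{\rm N}(\mathfrak p)^k$, and every factor is positive.
\end{itemize}
Together with (b), this gives $\mu({\mathcal R}_\Sigma\cap{\mathcal R}_{\mathscr C^c})>0$.
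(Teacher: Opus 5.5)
Your proof is correct. Parts (a) and (c) match the paper's argument; in (c) you also spell out why $c_{\mathfrak p}<{\rm N}(\mathfrak p)^k$ for $\mathfrak p\notin\mathscr C$, and why absolute convergence then keeps the product away from $0$.

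The genuine difference is how you show ${\mathcal R}_\Sigma\neq\emptyset$ in (b). The paper notes that each ${\mathcal R}_{\mathfrak a_i}$ is nonempty, since there is no fixed divisor. It then cites the ``nonvanishing approximation property w.r.t.\ $\irr$'' of Dedekind rings, \cite[Lemma~3.11]{BD25}, to conclude that the intersection $\bigcap_i{\mathcal R}_{\mathfrak a_i}$ is nonempty. You prove this directly instead. The Chinese Remainder Theorem over the finite set $\mathscr C$ gives one $\underline t$ at which $\min_i\ord_{\mathfrak p}Q_i(\underline t)$ is globally minimal at every $\mathfrak p\in\mathscr C$ at once. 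Any $\mathfrak a_j$ dividing all $Q_i(\underline t)$ would then divide all $Q_i(\underline t')$ for every $\underline t'$, so its irreducible generator would be a fixed divisor.

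Your route makes the lemma self-contained. It also shows more: the same $\underline t$ lies in ${\mathcal R}_{\mathfrak a}$ for every nonzero proper principal ideal $\mathfrak a$ supported on $\mathscr C$. The citation is shorter and places the step inside the general framework of \cite{BD25}.

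The density step also differs slightly. You write ${\mathcal R}_\Sigma$ as a union of $v\geq 1$ cosets of $\Psi^k$, which gives both the existence of $\mu({\mathcal R}_\Sigma)$ and its exact value $v/{\rm N}(\Psi)^k$. The paper only exhibits one coset of $(\mathfrak a_1\cdots\mathfrak a_\sigma)^k$ inside ${\mathcal R}_\Sigma$. It relies on the end of the proof of Theorem~\ref{thm:ekpoongen-2} for the existence of the density. Both work.
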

\begin{proof} 
(a) Let $\underline{m} \in {\mathcal R}_{\Sigma} \cap {\mathcal R}_{{\mathscr{C}}^c}$. If $\underline m \notin {\mathcal R}$, there exists a proper principal ideal  $\mathfrak{p}\subset {\mathcal O}_K$ such that $\underline{m} \notin \mathcal{R}_{\mathfrak{p}}$. 
Since ${\mathcal O}_K$ is Noetherian, one may and will assume that $\mathfrak{p} \in \irr$.  We have $\underline{P}(\underline{m})\equiv 0 \pmod{\mathfrak{p}}$, so $\underline{P}(\underline{m})\equiv 0$ modulo any prime ideal of ${\mathcal O}_K$ involved in the prime ideal factorization of $\mathfrak{p}$. As $\underline{m} \in {\mathcal R}_{{\mathscr{C}}^c}$, we have ${\rm Supp}(\mathfrak{p}) \subset \mathscr{C}$. Hence $\mathfrak{p}\in \Sigma$, \hbox{i.e.}, $\mathfrak{p}=\mathfrak{a}_j$ for some $j=1,\dots,\sigma$. It follows that $\underline{P}(\underline{m}) \equiv 0 \pmod{\mathfrak{a}_j}$, that is, $\underline{m} \notin \mathcal{R}_{\mathfrak{a}_j}$, a contradiction since $\underline{m} \in {\mathcal R}_\Sigma$.
This proves that ${\mathcal R}_{\Sigma} \cap {\mathcal R}_{{\mathscr{C}}^c} \subset \mathcal{R}$.
\vskip 1mm

\noindent
(b) It follows from the assumption that the product $P=P_1 \cdots P_s$ has no {fixed divisor \hbox{w.r.t.} $\underline{T}$} that $\mathcal{R}_{\mathfrak{a}_i}$ is nonempty for all indices $i=1,\ldots,\sigma$. From \cite[Lemma~3.11]{BD25}, the Dedekind ring ${\mathcal O}_K$ satisfies the ``nonvanishing approximation property \hbox{w.r.t.} $\irr$''. This property guarantees that the intersection ${\mathcal R}_{\Sigma}=\bigcap_{i=1}^\sigma \mathcal{R}_{\mathfrak{a}_i}$ is nonempty \cite[Definition 3.3 (1)]{BD25}. Fix $\underline{m}=(m_1,\ldots,m_k) \in {\mathcal R}_{\Sigma}$. Then the set ${\mathcal R}_{\Sigma}$ contains the product
\vskip 2mm

\centerline{$\prod_{i=1}^{k} \left( m_i + \mathfrak{a}_1 \cdots \mathfrak{a}_\sigma \right)
$,}
\vskip 2mm

\noindent
which, by Proposition~\ref{lem:bounheigharithprog}, has density $\displaystyle 1/{\rm N}(\mathfrak{a}_1 \cdots \mathfrak{a}_\sigma)^k>0$.
\vskip 2mm

\noindent
(c) From (a), it suffices to show that $\mu({\mathcal R}_{\Sigma} \cap {\mathcal R}_{{\mathscr{C}}^c})>0$. Now Theorem \ref{thm:ekpoongen-2} gives:
\vskip 1,5mm

\centerline{$\displaystyle 
\mu({\mathcal R}_{\Sigma} \cap {\mathcal R}_{{\mathscr{C}}^c})=\mu\left({\mathcal R}_{\Sigma}\right) \cdot \prod_{ \mathfrak{p} \in ({\rm Spec}\hskip 1pt \mathcal{O}_K)^\ast \setminus \mathscr{C}} \left( 1 - \frac{c_{\mathfrak{p}}}{{\rm N}(\mathfrak{p})^k} \right).
$}
\vskip 2mm

\noindent
By definition of $\mathscr{C}$, we have 
$\displaystyle 1-(c_{{\frak p}}/{{\rm N}(\mathfrak{p})^k})>0$ for any  $\mathfrak{p} \in  {\mathscr{C}}^c$. Combining this with Lemma \ref{lem:incluRR}(b), it follows that $\mu({\mathcal R}_{\Sigma} \cap {\mathcal R}_{{\mathscr{C}}^c})>0$.
\end{proof}

The proof of Theorem~\ref{thm:denposHilIrrThm} can be completed. Denote the classical Hilbert subset of $K^k$ of all $\underline{m}$ such that $P_1(\underline{m},\underline{Y}), \ldots, P_s(\underline{m},\underline{Y})$ are irreducible in $K[\underline{Y}]$ by $\mathcal{H}_K(P_1,\ldots,P_s)$. By Definition \ref{def:int-Hil}, we have 
\vskip 1mm

\centerline{${\mathcal H}_{\mathcal Z}(P_1,\ldots,P_s) = \mathcal{H}_K(P_1,\ldots,P_s) \cap {\mathcal R}$.}
\vskip 2mm

\noindent
By \cite[Theorem~1.1]{PS24}, we have $\mu(\mathcal{H}_K(P_1,\ldots,P_s)\cap \mathcal{O}_K^k)=1$. Conjoining this with Lemma \ref{lem:incluRR}(c), we obtain that $\mathcal{H}_K(P_1,\ldots,P_s)\cap \mathcal{O}_K^k \cap \mathcal{R}={\mathcal H}_{\mathcal Z}(P_1,\ldots,P_s)$ contains a subset of positive density.

\section{Proof of Lemma \ref{lem:technical}}
\label{sec:second-estimate}

Recall that the goal of this section is to give a proof of the fact that $\lim_{M \rightarrow \infty} \mu_{{\rm sup}}(\mathcal{C}_{>M})=0$ (Lemma \ref{lem:technical}) by adjusting Poonen's arguments given in the case ${\mathcal O}_K={\Zz}$.

Let $\varepsilon<1/\fd$. Fix $M$ and $B \ge M$ such that $\theta(B):=\theta_{\varepsilon}(B)=\log_q(B)^{\frac{1}{1-\varepsilon}} \geq M$ (as in Corollary \ref{lem:boundinarithproBC}) and consider the decomposition:
	$$\mathcal{C}_{>M} = \mathcal{C}_{>M, \le \theta(B)} \cup \mathcal{C}_{>\theta(B)},$$
	where $\mathcal{C}_{>M, \le \theta(B)} = \mathcal{C}_{\Gamma_{\leq \theta(B)} \setminus (\Gamma_{\leq M} \cup \mathscr{C})}$.
	
    \subsection{{Estimate of $\mathcal{C}_{>M, \le \theta(B)}$.}} Note that for sufficiently large $B$ and for each $\mathfrak{p} \in \Gamma_{\leq \theta(B)}\setminus (\Gamma_{\leq M} \cup \mathscr{C})$, we have 
    $$
    \begin{cases}{\rm N}(\mathfrak{p}) \leq {\rm N}(\mathfrak{p})^{1/\fd}\log_q(B)&\textrm{if }K \textrm{ is a number field};\\
    \frac{B}{{\rm N}(\fp)} \geq \max \left\lbrace \log_q(B), \frac{B^{1/2}}{{\rm N}(\fp)^{1/2}} \right\rbrace&\textrm{if }K \textrm{ is a function field and }\#\cS_\infty=1;\\
    \frac{\log_q(B)^{s-1}B}{{\rm N}(\fp)} \geq \max \left\lbrace \log_q(B)^{s},\frac{B \log_q(B)^{s-2} \deg(\fp)}{{\rm N}(\fp)^{1/\fd}}\right\rbrace&\textrm{if }K \textrm{ is a function field and }\#\cS_\infty \geq 2.
    \end{cases}
    $$ Thus, by Theorem \ref{prop:crucari}, there exists a constant $A_K$ (only depending on $K$) such that
    $$
    \frac{\#(\mathfrak{p}+a)_B}{\#\mathcal{O}_{K,B}} \leq \frac{A_K}{{\rm N}(\mathfrak{p})},
    $$ for all $\mathfrak{p} \in \Gamma_{\leq \theta(B)}\setminus (\Gamma_{\leq M} \cup \mathscr{C})$ and $a \in \mathcal{O}_K$. Set $C=(A_K)^k d^\ell$, where $d=\max_{1 \leq i \leq  \ell}\deg(Q_i)$. 
    
    Fix $\mathfrak{p} \in \Gamma_{\leq \theta(B)}\setminus (\Gamma_{\leq M} \cup \mathscr{C})$. From Lemma \ref{local-lemma}(b), we have $c_{\mathfrak{p}} \leq d^\ell {\rm N}(\mathfrak{p})^{k-2}$. Hence,
 $$ \frac{\#(\mathcal{C}_{\mathfrak{p}} \cap \mathcal{O}^k_{K,B})}{\#\mathcal{O}^k_{K,B}} \le c_{\mathfrak{p}} \left( \frac{A_K}{{\rm N}(\mathfrak{p})} \right)^k \le C \cdot \frac{1}{{\rm N}(\mathfrak{p})^2}.
 $$
    
    Consequently, we obtain:
	\begin{equation}
		\label{eq:QMB}
		\frac{\# (\mathcal{C}_{>M, \leq \theta(B)} \cap \mathcal{O}^k_{K,B})}{\# \mathcal{O}^k_{K,B}} 
		\le \sum_{\mathfrak{p} \in \Gamma_{\leq \theta(B)} \setminus (\Gamma_{\leq M} \cup \mathscr{C})} \frac{\# (\mathcal{C}_{\mathfrak{p}} \cap \mathcal{O}^k_{K,B})}{\#\mathcal{O}^k_{K,B}} 
		\le  \sum_{\mathfrak{p} \in \Gamma_{\leq \theta(B)} \setminus (\Gamma_{\leq M} \cup \mathscr{C})} \frac{C}{{\rm N}(\mathfrak{p})^2} 
		\le C \sum_{\mathfrak{p}\notin \Gamma_{\leq M}} \frac{1}{{\rm N}(\mathfrak{p})^2}. 
	\end{equation}
	The last sum is independent of  $B$ and tends to $0$ as $M \to +\infty$.

\subsection{Estimate of $\mathcal{C}_{>\theta(B)}$} Our aim is to prove that:
    \begin{equation}
		\label{eq:QB}	
		\lim_{M \rightarrow \infty} \frac{\# (\mathcal{C}_{>\theta(B)} \cap \mathcal{O}^k_{K,B})}{\# \mathcal{O}^k_{K,B}} =0.
	\end{equation}

 \noindent   
    \emph{Preliminaries.}
	First, we may reduce to the case where each polynomial $Q_i$ is irreducible in $\mathcal{O}_K[\ul T]$. Indeed, suppose that $Q_1 = Q \cdot R$ with $Q,R \in \mathcal{O}_K[\ul T]$. For $\underline{m} \in \mathcal{O}_K^k$ and a prime ideal $ \mathfrak{p}$, if $\mathfrak{p}|Q_1(\ul m)$, then either $ \mathfrak{p} | Q(\ul m)$ or $ \mathfrak{p} | R(\ul m)$. Therefore, $\mathcal{C}_{>\theta(B)}(QR,Q_2,\ldots,Q_\ell) \subset \mathcal{C}_{>\theta(B)}(Q,Q_2,\ldots,Q_\ell) \cup \mathcal{C}_{>\theta(B)}(R,Q_2,\ldots,Q_\ell)$.

	Second, we may also reduce to the case where one of the polynomial -- say, $Q_\ell$ -- depends only on the variables $T_1,\ldots,T_{k-1}$. If this is not the case, then since $Q_1,\ldots,Q_\ell$ are coprime, there exists a B\'{e}zout identity 
    $$\sum_{i=1}^\ell A_i(\ul T)Q_i(\ul T) = \Delta(T_1,\ldots,T_{k-1})
    $$
	where $A_i \in \mathcal{O}_K[\ul T]$($i=1,\ldots,\ell$) and $\Delta \in \mathcal{O}_K[T_1,\ldots,T_{k-1}]$, with $\Delta \neq 0$. For $\underline{m} \in \mathcal{O}_K^k$ and a prime ideal $\mathfrak{p}$, if $\mathfrak{p}$ divides all $Q_i(\ul m)$ ($i=1,\ldots,\ell$), then $ \mathfrak{p} | \Delta(\ul m)$. Hence
	$\mathcal{C}_{>\theta(B)}(Q_1,\ldots,Q_\ell) \subset \mathcal{C}_{>\theta(B)}(Q_1,\ldots,Q_{\ell-1},\Delta)$
	and $Q_1,\ldots,Q_{\ell-1},\Delta$ are coprime polynomials (if some nonconstant polynomial $R$ divides $Q_1$, a polynomial where all the variables $T_1,\ldots, T_k$ occur, then $R=Q_1$, up to some multiplicative constant, because $Q_1$ is irreducible, but then $R$ cannot divide $\Delta$, a polynomial in the variables $T_1,\ldots,T_{k-1}$ only). 

	Third, we may assume that the leading coefficient of $Q_i(\ul T)$, ($1 \leq i\leq \ell-1$), seen as a polynomial in $T_k$, is not divisible by the last polynomial $Q_\ell(T_1,\ldots,T_{k-1})$.
	Indeed, write $Q_i(\ul T) = Q_i^{0}(T_1,\ldots,T_{k-1}) T_k^{\delta_i} + \cdots \in \mathcal{O}_K[T_1,\ldots,T_{k-1}][T_k]$. If $Q_i^0 =F_i Q_\ell$, then we can replace $Q_i$ by $\widetilde{Q}_i = Q_i - F_i Q_\ell T_k^{\delta_i}$, which has strictly lower degree in $T_k$. We repeat this process inductively until either $\deg_{T_k}(Q_i)=0$ or $Q_\ell$ no longer divides $Q_i^0$. Note that the set $\mathcal{C}_{\mathfrak{p}}$ is preserved in this process:
	$\mathfrak{p} | Q_i(\ul m)$ and $Q_\ell(\ul m)$ iff $\mathfrak{p}$ divides $(Q_i - F_i Q_\ell T_k^{\delta_i})(\ul m)$ and $Q_\ell(\ul m)$. If necessary, we may then apply the first reduction to this new set of polynomials, to get irreducible polynomials.
\vskip 1mm
	
	We now prove \eqref{eq:QB}.
\vskip 1mm
\noindent
	\emph{Induction.} The proof proceeds by induction on the dimension $k$.
	For $k=1$, a B\'ezout identity 
    $$\sum_{i=1}^\ell A_i(T)Q_i(T) = \Delta,
    $$ with $A_i \in \mathcal{O}_K[X]$, $\Delta \in \mathcal{O}_K$ and $\Delta \neq 0$, implies that if $\mathfrak{p} | Q_i(m)$ for every $i=1,\ldots,\ell$, then $\mathfrak{p} | \Delta \in \mathcal{O}_K$. Therefore, when $\theta(B)>|{\rm N}(\Delta)|$, we have $\mathcal{C}_{>\theta(B)} = \emptyset$.
 
	For $k > 1$, we introduce the three following subsets $\mathcal{S}_1$, $\mathcal{S}_2$, $\mathcal{S}_3$, and work with the inclusion:
	$$
    \mathcal{C}_{>\theta(B)} \cap \mathcal{O}^k_{K,B} \subset \mathcal{S}_1 \cup \mathcal{S}_2 \cup \mathcal{S}_3.
    $$
	 \vskip 1mm
 \noindent
$\bullet$ $\mathcal{S}_1 = \{ \ul m \in \mathcal{O}^k_{K,B} \mid Q_\ell(\ul m) = 0 \}$. By the Schwartz--Zippel lemma, and since the size of $\mathcal{O}_{K,B}$ tends to $\infty$ with $B$, we have 
        $$\frac{\#\mathcal{S}_1}{\#\mathcal{O}^k_{K,B}} \rightarrow 0\,\,\,\,\textrm{as}\,\,B \to +\infty.
        $$	
        \vskip 1mm
        \noindent
		$\bullet$ $\mathcal{S}_2 = \{ \ul m \in \mathcal{O}^k_{K,B} \mid \exists \mathfrak{p} \in {\rm Spec}(\mathcal{O}_K)\setminus (\Gamma_{\leq \theta(B)} \cup \mathscr{C})\textrm{ such that } \mathfrak{p}|Q_1^0(\ul m),\ldots,\mathfrak{p} | Q_{\ell-1}^0(\ul m), \mathfrak{p} | Q_\ell(\ul m)  \}$. By the induction hypothesis, we have again
        $$\frac{\#\mathcal{S}_2}{\#\mathcal{O}^k_{K,B}}\rightarrow 0\,\,\,\,\textrm{as}\,\,B \to +\infty.
        $$ 
	\vskip 1mm
    \noindent
    $\bullet$ $\mathcal{S}_3 = \{ \ul m \in \mathcal{O}^k_{K,B} \mid Q_\ell(\ul m) \neq 0, \exists \mathfrak{p} \in {\rm Spec}(\mathcal{O}_K)\setminus (\Gamma_{\leq \theta(B)} \cup \mathscr{C}),  \mathfrak{p}|Q_1(\ul m),\ldots, \mathfrak{p} | Q_\ell(\ul m),  \mathfrak{p} \nmid Q_{i_0}^0(\ul m) \text{ for some } 1 \le i_0 \leq \ell-1 \}$.
        As in Lemma \ref{lem:funfunc}, for sufficiently large $B$, and for all $\underline{m} \in \mathcal{O}_{K,B}^k$ with $Q_\ell(\underline{m}) \neq 0$, there are at most {$L(B):=\frac{\gamma_{Q_\ell,\fd} \log_q(B) }{\log_q(\theta(B))}$} prime ideals $\mathfrak{p} \notin (\Gamma_{\leq \theta(B)} \cup \mathscr{C})$ such that $\mathfrak{p}|Q_\ell(\ul m)$. In the remainder, we assume that $B$ is large enough to satisfy this condition.	
  
		Fix a tuple $(m_1,\ldots,m_{k-1}) \in \mathcal{O}^{k-1}_{K,B}$, and consider $\ul m= (m_1,\ldots,m_{k-1},m_k) \in \mathcal{S}_3$. 
		
		Let $\mathfrak{p}$ be a prime ideal such that, for some index $i_0\in \{1,\ldots,\ell-1\}$, we have $\mathfrak{p} \nmid Q_{i_0}^0(m_1,\ldots,m_{k-1})$. Write 
		$$Q_{i_0}(m_1,\ldots,m_{k-1},x)
		= Q_{i_0}^0(m_1,\ldots,m_{k-1}) x^{\delta_{i_0}} + \cdots$$
		Let $D_B$ be as in Corollary~\ref{lem:boundinarithproBC}. Then, there are at most $\delta_{i_0} D_B$ elements $x=m_k \in \mathcal{O}_K$ with $0 \le {\rm H}(m_k) \leq B$ such that $\mathfrak{p} | Q_{i_0}(m_1,\ldots,m_{k-1},m_k)$.
		Therefore, for each $(m_1,\ldots,m_{k-1}) \in \mathcal{O}^{k-1}_{K,B}$, there are at most $$D'_B=D_B(\ell-1) \cdot L(B) \cdot \delta_1 \cdots \delta_{\ell-1}
        $$ values of $m_k \in \mathcal{O}_{K,B}$ such that $(m_1,\ldots,m_{k-1},m_k) \in \mathcal{S}_3$.
		Hence, we conclude that 
        $$
        \frac{\#\mathcal{S}_3}{\#\mathcal{O}^k_{K,B}} \le \frac{D'_B \cdot \#\mathcal{O}^{k-1}_{K,B}}{\#\mathcal{O}^{k}_{K,B}} = \frac{D'_B}{\#\mathcal{O}_{K,B}}\rightarrow 0\,\,\,\,\textrm{as}\,\,B \to +\infty.
        $$ 
	
\subsection{Conclusion}
	Recall that $\mathcal{C}_{>M} = \mathcal{C}_{>M, \le \theta(B)} \cup \mathcal{C}_{>\theta(B)}$. Therefore, by \eqref{eq:QMB}, we have:
	$$\frac{\#(\mathcal{C}_{>M} \cap \mathcal{O}^k_{K,B}) }{\# \mathcal{O}^k_{K,B}} \le 
	\frac{\# (\mathcal{C}_{>M, \le \theta(B)} \cap \mathcal{O}^k_{K,B})}{\# \mathcal{O}^k_{K,B}} 
	+ \frac{\# (\mathcal{C}_{>\theta(B)} \cap \mathcal{O}^k_{K,B})}{\# \mathcal{O}^k_{K,B}}
	\le C \sum_{{\rm N}(\mathfrak{p}) > M} \frac{1}{{\rm N}(\mathfrak{p})^2} + \frac{\# (\mathcal{C}_{>\theta(B)} \cap \mathcal{O}^k_{K,B})}{\# \mathcal{O}^k_{K,B}}.$$
	By \eqref{eq:QB}, the last term tends to $0$ as $B \to +\infty$,
	and the first term tends to $0$ as $M  \to +\infty$. Hence, we conclude 
    $$
    \mu_{\rm sup}(\mathcal{C}_{>M}) \rightarrow 0\,\,\,\,\textrm{as}\,\,M \rightarrow \infty.
    $$
\vskip 1mm
\noindent
{\bf Acknowledgements:} The first author is supported by a JSPS Postdoctoral Fellowship, and also gratefully acknowledges Noy Soffer Aranov, Christian Elsholtz and TU Graz for funding his research visits at TU Graz between March and June 2026. The second author acknowledges the support of the CDP C2EMPI, together with the French State under the France-2030 programme, the University of Lille, the Initiative of Excellence of the University of Lille, the European Metropolis of Lille for their funding and support of the R-CDP-24-004-C2EMPI project.

\bibliography{attempt}
\bibliographystyle{alpha}
\end{document}